\newcommand{\eq}[1][r]
   {\ar@<-3pt>@{-}[#1]
    \ar@<-1pt>@{}[#1]|<{}="gauche"
    \ar@<+0pt>@{}[#1]|-{}="milieu"
    \ar@<+1pt>@{}[#1]|>{}="droite"
    \ar@/^2pt/@{-}"gauche";"milieu"
    \ar@/_2pt/@{-}"milieu";"droite"}
\newif\ifpix \pixtrue
\numberwithin{equation}{section}
\def\TT{{\mathbb{T}}}
\def\JJ{{\mathbb{J}}}
\def\ZZ{{\mathbb{Z}}}    \def\QQ{{\mathbb{Q}}} \def\CC{{\mathbb{C}}}
\def\RR{{\mathbb{R}}}  \def\TT{{\mathbb{T}}}   \def\TT{{\mathbb{T}}} 
\def\cO{{\mathcal{O}}}
\def\cG{{\mathcal{G}}}
\def\cH{{\mathcal{H}}}
\def\cJ{{\mathcal{J}}}
\def\fK{{\mathfrak{K}}}
\def\cL{{\mathcal{L}}}
\def\cK{{\mathcal{K}}}
\def\cV{{\mathcal{V}}}
\def\cH{{\mathcal{H}}}
\def\cU{{\mathcal{U}}}
\def\cW{{\mathcal{W}}}
\def\AC{\mathcal{AC}}
\def\Ham{\mathrm{Ham}}
\renewcommand{\epsilon}{\varepsilon}
\newcommand{\ip}[1]{\langle #1 \rangle}
\newcommand{\vol}{\mathrm{vol}}
\newcommand{\voltilde}{\widetilde{\mathrm{vol}}}
\newcommand{\id}{\mathrm{id}}
\newcommand{\Diff}{\mathrm{Diff}}
\newcommand{\Ric}{\mathrm{Ric}}
\newcommand{\CP}{\mathbb{CP}}
\newcommand{\del}{\partial}
\newcommand{\vform}{{\mathrm dv}}
\def\ra{\rightarrow}
\def\kt{\mathfrak{t}}
\def\g{\mathfrak{g}}
\def\pol{P}
\def\bT{\mathbb{T}}
\def\tr{{\mathrm tr}}
\def\q{{\bf{q}}}
\newtheorem{lemma}[subsubsection]{Lemma}
\newtheorem{prop}[subsubsection]{Proposition}
\newtheorem{cor}[subsubsection]{Corollary}
\newtheorem{theo}[subsubsection]{Theorem}
\newtheorem{theointro}{Theorem}
\newtheorem{corintro}[theointro]{Corollary}
\theoremstyle{definition}
\newtheorem{dfn}[subsubsection]{Definition}
\newtheorem{rmk}[subsubsection]{Remark}
\newtheorem{example}[subsubsection]{Example}
\newtheorem{examples}[subsubsection]{Examples}
\newtheorem*{rmk*}{Remark}
\newtheorem*{rmks*}{Remarks}
\newtheorem{rmks}[subsubsection]{Remarks}
 \newcommand*{\quot}[2]%
{\ensuremath{%
   \raisebox{.35ex}{\ensuremath{#1}}\big/\raisebox{-.35ex}{\ensuremath{#2}}}}
\title{Hamiltonian stationary Lagrangian fibrations}
\author{Eveline Legendre}\address{Eveline Legendre\\ Universit\'e Paul Sabatier\\
Institut de Math\'ematiques de Toulouse\\ 118 route de Narbonne\\
31062 Toulouse\\ France}
\email{eveline.legendre@math.univ-toulouse.fr}
\author{Yann Rollin}
\address{Yann Rollin, Laboratoire Jean Leray, Universit\'e de Nantes}
\email{yann.rollin@univ-nantes.fr}
\thanks{The authors are partially supported by France ANR project EMARKS No ANR-14-CE25-0010. 
The first author visited the second on a Labex CIMI grant. The authors wish to thank Henri Anciaux for a valuable discussion.}
\begin{document}
{\Huge \sc \bf\maketitle}
\begin{abstract}
Hamiltonian stationary Lagrangian submanifolds (HSLAG)  are a natural generalization of special Lagrangian manifolds (SLAG). 
The latter only make sense on Calabi-Yau manifolds whereas the former are defined for any almost Kähler manifold. Special Lagrangians, and, more specificaly, fibrations by special Lagrangians play an important role in the context of the geometric mirror symmetry conjecture. However, these objects are rather scarce in nature. On the contrary, we show that HSLAG submanifolds, or fibrations, arise quite often. Many examples of HSLAG fibrations  are provided by toric Kähler geometry. In this paper, we obtain a large class of examples by deforming the toric metrics into non toric almost Kähler  metrics, together with HSLAG submanifolds. 
\end{abstract}

\section{Introduction}
Let $M$ be a closed smooth  manifold endowed with a symplectic form
$\omega$ and an almost complex structure $J$. 
If the
bilinear form defined on each tangent space at $m\in M$
$$g_J(v,w)=\omega(v,Jw),\quad  \forall v,w\in T_mM$$
is a Riemannian metric,  we say that $J$ is a \emph{compatible} almost
complex structure on the symplectic manifolds $(M,\omega)$. 
Such a triplet $(M,\omega,J)$ is then called an
\emph{almost K\"ahler manifold}. If in addition $J$
is integrable,  $(M,\omega,J)$ is a \emph{K\"ahler manifold}. The
space of all compatible complex structures on a given symplectic
manifold will be denoted $\AC_\omega$.

Let  $L$ be a closed  manifold and $\ell:L\to M$ a
Lagrangian embedding. In other words,  $\dim M = 2 \dim L$ and $\ell^*\omega = 0$.
 Hamiltonian transformations $u\in\Ham_\omega$ of $(M,\omega)$ act on such
Lagrangian maps by composition on the left
$$u\cdot \ell = u\circ \ell.$$
Sometimes we shall use a different, yet equivalent, point of view where $\Ham_\omega$ leaves $\ell:L\to M$
fixed and acts instead on the space $\AC_\omega$ by 
$$
u\cdot J = u^*J.
$$

Given an almost K\"ahler manifolds $(M,\omega, J)$,
a Lagrangian embedding  is called \emph{Hamiltonian stationary} if
it is a critical point of the volume functional under Hamiltonian
deformations. As a short hand, such an embedding will be called a
\emph{HSLAG embedding} and its image a \emph{HSLAG submanifold} of $(M,\omega,J)$. These Lagrangians were
introduced by Oh~\cite{O90} and may be understood as a natural
generalization  of special Lagrangian manifolds (SLAG), defined in the case of a
Calabi-Yau manifolds.

More concretely, for $J\in\AC_\omega$ and a Lagrangian embedding $\ell
:L\to M$, the
volume $\vol(\ell, J):= \vol (\ell(L),g_J)$ is the volume of $L$ endowed
with the pull-back metric $\ell^*g_J$. By definition of the
the action of $\Ham_\omega$, we have
$$
\vol(u\cdot \ell,J)=\vol(\ell, u\cdot J),
$$
so that finding critical points of the volume functional under the action of $\Ham_\omega$ on
either $\AC_\omega$ or the space of Lagrangian embeddings are
equivalent problems.

The group $\Diff(L)$ of diffeomorphisms of $L$ also acts on the space
of Lagrangian embeddings on the right by 
$$\ell \cdot v =\ell\circ v, \mbox { where } v\in\Diff(L).$$
 However the volume is invariant under this action so that the
problem of finding a stationary Lagrangian embeddind is greatly
underdetermined. Sometimes we may think of Lagrangian submanifold
rather than embeddings to avoid this infinite dimensional degree of
freedom in the equation.

\subsection{The case of a single Lagrangian}
We shall prove the following existence theorem for \emph{rigid} HSLAG embeddings upto small
deformations of the compatible almost complex structure:
\begin{theointro}
\label{theo:A}
Let $(M,\omega,J_0)$  be a closed K\"ahler manifold and $L$ a closed manifold. Let $G$ be the group of
Hamiltonian isometries of $(M,\omega, J_0)$  and
   $\ell:L\to M$  be a rigid HSLAG embedding.

There exists a $G$-invariant neighborhood $W$ of $J_0$ in $\AC_\omega$ and a
 map $\psi:W\to \Ham_\omega$ such that $\psi(J_0)=\id$, with
the property that every $G$-orbit in $W$ contains a representative
$J$,
such that the Lagrangian embedding $\psi(J)\cdot \ell:L\to M$
is HSLAG with respect to $J$.
\end{theointro}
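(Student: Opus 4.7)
My approach is a Lyapunov--Schmidt reduction coupled with an argument exploiting the $G$-equivariance of the HSLAG equation. I would start by using Weinstein's neighbourhood theorem to identify a neighbourhood of $\ell(L)\subset M$ with one of the zero section of $T^*L$, so that small Hamiltonian deformations of $\ell$ are parameterised by functions $h$ in a H\"older completion $V:=C^{k,\alpha}(L)/\RR$ via $h\mapsto\ell_h$. With $Z:=C^{k-4,\alpha}_0(L)$ (zero-mean functions), define
\[
F\colon V\times\AC_\omega\longrightarrow Z,\qquad F(h,J)=d^*\alpha_{\ell_h,J},
\]
where $\alpha_{\ell_h,J}$ is the mean-curvature $1$-form of $\ell_h(L)\subset(M,g_J)$ (the $\omega$-dual of the mean curvature vector) and $d^*$ is the codifferential with respect to $\ell_h^*g_J$. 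The HSLAG condition for $(\ell_h,J)$ is exactly $F(h,J)=0$, and $F(0,J_0)=0$.

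The next step is to analyse the linearisation $P:=D_1F_{(0,J_0)}\colon V\to Z$. This is the Hessian of the volume functional restricted to Hamiltonian variations; standard computations going back to Oh show that $P$ is a self-adjoint $4$th-order elliptic operator on $L$. Differentiating the tautological equivariance $F(g\cdot h,g\cdot J)=F(h,J)$ at $(0,J_0)$ in direction $\xi\in\fg:=\Lie(G)$ shows that the restriction map $\fg\to V$, $\xi\mapsto\xi|_L$ (identifying $\fg$ with normalised Hamiltonian potentials), takes values in $\ker P$. The \emph{rigidity} hypothesis is precisely the converse: this map surjects onto $\ker P$, so $\ker P\cong\fg/\fg_\ell$ where $\fg_\ell$ is the stabiliser Lie subalgebra of $\ell$. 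Self-adjointness then gives $\coker P\cong\ker P$ via the $L^2$ pairing.

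I would next apply the implicit function theorem to the restriction of $F$ to $(\fg/\fg_\ell)^\perp\subset V$, composed with the projection $Z\to\mathrm{im}\,P$. This yields a smooth $G$-equivariant map $\psi_0\colon W\to(\fg/\fg_\ell)^\perp$ on a $G$-invariant neighbourhood $W$ of $J_0$, with $\psi_0(J_0)=0$ and $F(\psi_0(J),J)\in\coker P$ for every $J\in W$. Defining $\psi(J)\in\Ham_\omega$ to be the time-one Hamiltonian flow of $\psi_0(J)$ produces a map $\psi\colon W\to\Ham_\omega$ with $\psi(J_0)=\id$, and the embedding $\psi(J)\cdot\ell$ is HSLAG for $J$ exactly when the finite-dimensional obstruction $\overline F(J):=\mathrm{pr}_{\coker P}F(\psi_0(J),J)\in\fg/\fg_\ell$ vanishes.

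The last and hardest step is to show that every $G$-orbit in $W$ meets $\overline F^{-1}(0)$. By construction $\overline F$ is $G$-equivariant, and the identification $\coker P\cong\fg/\fg_\ell$, together with the variational origin of the equation, strongly suggest that $\overline F$ behaves like a moment map for the $G$-action on $W$, with $\overline F^{-1}(0)$ functioning as a slice meeting every nearby $G$-orbit. Making this precise is the main obstacle: it is precisely what forces the weaker conclusion (a single representative per orbit, rather than every $J\in W$), and relies on the delicate interplay between the $G$-action on $\AC_\omega$, the self-duality $\coker P\cong\ker P$, and the variational structure of the HSLAG equation.
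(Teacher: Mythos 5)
Your setup reproduces the paper's framework faithfully: the Weinstein chart, the fourth-order self-adjoint linearisation $\Box$ at $(0,J_0)$, rigidity as surjectivity of the restriction of Killing potentials onto $\ker\Box$, and a finite-dimensional obstruction map on a $G$-invariant neighbourhood $W$. But your final step --- showing every $G$-orbit in $W$ meets the zero set of the obstruction --- is exactly where the theorem lives, and you leave it as a heuristic (``$\overline F$ behaves like a moment map\dots making this precise is the main obstacle''). This is a genuine gap, not a deferral of routine detail. The paper does \emph{not} use any moment-map structure. Instead it introduces the modified volume functional $\voltilde(J)=\vol(L,\ell_h^*g_J)$, where $\ell_h$ is the relatively HSLAG embedding produced by the implicit function theorem, restricts it to a $G$-orbit (compact, since $G$ is), and picks a minimiser $J$. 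Proposition~\ref{prop:critical} then shows the obstruction vanishes at such a critical point by contradiction: if $d^*\alpha_H=\psi=v\circ\ell_h$ for a Killing potential $v$, flow along the one-parameter subgroup $u_t$ generated by $X_v$, and compare the first variation of volume with quantitative estimates (Lemmas~\ref{lemma:unifest}, \ref{lemma:piest}, \ref{lemma:boxest}) giving $\|\dot h\|_{L^2}\le\frac12\|\psi\|_{L^2}$, whence
\begin{equation*}
\dot\vol=\|\psi\|^2+\ip{\psi,\dot h}\ \ge\ \tfrac12\|\psi\|^2>0
\end{equation*}
unless $\psi=0$, contradicting criticality. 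Without this (or an equivalent) mechanism your argument proves only the existence of relatively HSLAG embeddings, which is strictly weaker than Theorem~\ref{theo:A}.

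A secondary but consequential discrepancy: you project onto the \emph{fixed} cokernel $\ker P\cong\cK_L$ at the base point, whereas the paper projects onto the varying finite-rank bundle $\fK_{h+k}$ whose fibers are restrictions of Killing potentials via the \emph{deformed} embedding (and, relatedly, augments the unknown so that the linearisation $\dot f\mapsto\dot f_L+\Box\dot h$ is an isomorphism outright, avoiding Lyapunov--Schmidt projections). This is not cosmetic: the contradiction argument above needs the residual obstruction to be $v\circ\ell_h$ for a \emph{global} Killing potential $v$, so that $X_v$ integrates to a one-parameter subgroup of $G$ acting at the deformed Lagrangian. With your fixed decomposition, the obstruction is merely a function in $\cK_L$ on $L$ and does not a priori extend to an infinitesimal Hamiltonian isometry of $M$ adapted to $\ell_h$, so even the paper's variational mechanism could not be run from your reduction as stated.
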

\begin{rmks}
  \begin{enumerate}
\item  As we shall see, the map $\psi$ is in fact continuous once suitable
  Hölder topologies are introduced.

\item The rigidity assumption of Theorem~\ref{theo:A} means that
infinitesimal deformation of the given HSLAG can only come from
Hamiltonian isometries. This property will be introduced precisely
at Definition~\ref{dfn:rigidstable}.

\item If the group of Hamiltonian isometries $G$ is trivial, the above
theorem, in particular the construction of the map $\psi$, follows from the implicit
function theorem. If $G$ is not trivial, the problem becomes obstructed and
this is more tricky. In this
case the solution $J$ in a given $G$-orbit
 comes from the
minimization of a finite dimensional  problem in a $G$-orbit.   
  \end{enumerate}

\end{rmks}

A large pool of examples is provided by toric Kähler manifolds which
typically have a non trivial group $G$. It turns out that
Lagrangian tori of standard toric Kähler manifolds are automatically rigid and
stable HSLAG. More precisely, we have the following result:
\begin{theointro}
\label{theo:torichslag}
  Let $(M,\omega,J_0)$ be a closed toric Kähler manifold endowed with the Guillemin metric. Let $\mu:M\to P$ be the moment map with image the
  moment polytope $P$. Then for any interior point $p$ of $P$, the
  Lagrangian torus $L_p=\pi^{-1}(p)$ is HSLAG, rigid and
  stable in $(M,\omega, J_0)$.
\end{theointro}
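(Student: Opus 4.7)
The argument exploits that $L_p$ is a free orbit of the compact torus $T$ of the toric structure, which acts on $(M,\omega,J_0)$ by Hamiltonian Kähler isometries for the Guillemin metric. Consequently every geometric quantity intrinsic to $L_p$ is $T$-invariant, and all computations can be carried out in action-angle coordinates.

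First I would establish the HSLAG property. Because $T$ acts transitively on $L_p$ while preserving both $\omega$ and $J_0$, the induced Riemannian metric on $L_p$ is translation-invariant, hence a flat metric on the torus $L_p\simeq T$. The mean curvature 1-form $\alpha_H\in\Omega^1(L_p)$ is likewise $T$-invariant, and a $T$-invariant 1-form on the flat torus is parallel, in particular harmonic. Since $J_0$ is integrable, Dazord's theorem guarantees that $\alpha_H$ is closed; being parallel, it is automatically co-closed. The Euler--Lagrange equation for a HSLAG embedding reduces to $d^*\alpha_H=0$, which is therefore satisfied.

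For rigidity and stability, I would study the Hessian of the volume functional on the space of Hamiltonian deformations of $L_p$, i.e.\ on $C^\infty(L_p)/\RR$. Choose symplectic (action-angle) coordinates $(x,\theta)\in P^\circ\times T$ in which the Guillemin metric takes the well-known form
\begin{equation*}
g_{J_0} \;=\; u_{ij}(x)\,dx^{i}dx^{j} \;+\; u^{ij}(x)\,d\theta^{i}d\theta^{j},
\end{equation*}
where $u$ is Guillemin's symplectic potential and $(u^{ij})=(u_{ij})^{-1}$; the induced metric on $L_p$ is the flat metric $u^{ij}(p)\,d\theta^{i}d\theta^{j}$. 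Because the ambient structure along $L_p$ is $T$-invariant, the fourth-order Jacobi operator $\mathcal{J}$ governing the linearization of the HSLAG equation commutes with the $T$-action. Decomposing $h\in C^\infty(L_p)$ into Fourier modes $e^{2\pi i\langle m,\theta\rangle}$ for $m\in\ZZ^n$, the operator $\mathcal{J}$ becomes diagonal with eigenvalues $\lambda_m$ readable off $g_{J_0}$ at $p$. The task is then to prove $\lambda_m>0$ for every $m\neq 0$; this gives at once triviality of $\ker\mathcal{J}$ on $C^\infty(L_p)/\RR$, hence rigidity (note that the Hamiltonians $\mu_j-p_j$ generating the $T$-isometries vanish identically on $L_p$, so Hamiltonian isometries from $T$ produce only the trivial deformation), and positivity of the Hessian, hence stability.

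The principal obstacle is the explicit computation and positivity of the eigenvalues $\lambda_m$. Concretely, one must show that a certain quadratic form built from $u^{ij}$ and its derivatives at $p$ is positive definite on each non-zero Fourier mode $m$. This step is expected to rely on the strict convexity of Guillemin's potential $u=\tfrac{1}{2}\sum_{k}\ell_{k}\log \ell_{k}$ on $P^\circ$ together with the structural identities satisfied by the derivatives of $\ell_k\log\ell_k$, which are what makes the Guillemin metric canonical among toric Kähler metrics.
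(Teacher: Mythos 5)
Your first step (the HSLAG property) is fine and is exactly the paper's Lemma~\ref{lemma:toric}: $T$-invariance makes $\alpha_H$ parallel on the flat orbit, hence $d^*\alpha_H=0$. The gap is in the rigidity/stability part, and it is not merely that the key computation is deferred: the target you set for that computation is false. You aim to show that all Fourier eigenvalues $\lambda_m$ of $\Box$ on $L_p$ are strictly positive for $m\neq 0$, i.e.\ that $\cK_L=\ker\Box$ consists of constants. This fails already for the theorem's most basic instance: on $\CP^n$ the Guillemin metric of the simplex \emph{is} the Fubini--Study metric, and by Oh's computation (Proposition~\ref{propOh} in the paper) the kernel of $\Box$ on a torus fiber contains the nonconstant functions $\sin(\theta_i-\theta_j),\,\cos(\theta_i-\theta_j)$ --- these are restrictions of the Killing potentials $\mathrm{Re}(z_i\bar z_j),\,\mathrm{Im}(z_i\bar z_j)$ generating the non-toric part of $\PU(n+1)$. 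So some $\lambda_m$ with $m\neq 0$ vanish, and they must: the $\PU(n+1)$-orbit of the Clifford torus is an honest family of HSLAG deformations not generated by $T$. Your parenthetical remark reveals the source of the error: rigidity in the sense of Definition~\ref{dfn:rigidstable} is \emph{surjectivity} of $\ell^*:\cK_M\to\cK_L$ for the full group $G$ of Hamiltonian isometries of $(M,\omega,J_0)$ --- which can be much larger than $T$ --- not triviality of $\cK_L$ modulo constants. By only considering the torus Hamiltonians $\mu_j-p_j$ (constant on $L_p$), you have implicitly replaced rigidity by the stronger, generally false statement $\cK_L=\RR$. Likewise stability only requires $\Box\geq 0$ with positivity transverse to $\cK_L$, not a positive definite Hessian.

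Consequently, even if you carried out the Fourier diagonalization (which is legitimate: $\Box$ has constant coefficients on the flat orbit by $T$-invariance), you would still need to identify exactly which zero modes occur and match each of them with a globally defined Killing potential of the Guillemin metric --- a step your framework does not contemplate and which is the actual crux. The paper circumvents this computation entirely: by Delzant--Lerman--Tolman, $(M,\omega)$ with the Guillemin metric is the Kähler reduction of flat $(\CC^d,\omega_{std})$ by a subtorus $G\subset\TT^d$; Lemma~\ref{vol_cst_MOMENT} shows $G$-orbit volumes are constant on level sets, which feeds the descent result Lemma~\ref{lemKRHsLag} (HSLAG and stability descend, via $\kappa\,\Box(f|_L)=\Box(\pi^*(f|_L))$) and Corollary~\ref{coroRIGID} (rigidity descends); upstairs, Oh's explicit second-variation computation for the tori $\{|z_i|=r_i\}$ supplies HSLAG, stability, and the full description of $\ker\Box$ as restrictions of Killing potentials of $\CC^d$. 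If you want to salvage your direct approach, the correct statement to prove in action-angle coordinates is $\lambda_m\geq 0$ with $\lambda_m=0$ precisely for those $m$ (such as $e_i-e_j$ annihilating $\mathfrak{g}=\ker\q$) whose modes extend to ambient Killing potentials --- essentially re-deriving Oh's result fiberwise rather than quoting it through the reduction.
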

A special case of this result concerns  $\CP^n$ endowed with
its standard toric action and the Fubini-Study metric, was proved by Ono in \cite{Ono2007}. 
Theorem~\ref{theo:A} applies in this context for each regular fiber. Notice that in
the one dimensional case, $\CP^1$ is the round sphere and complex dimension
one HSLAG submanifolds are just curves of constant curvature.

\begin{rmk}
Few constructions of HSLAG  submanifolds are
known. 
By variationnal methods, Schoen and Wolfson provided an existence theorem for HSLAG submanifolds
with singularities, on $4$-dimensional almost Kähler manifolds
(cf. \cite{SW1999, SW2001}). 
Joyce, Schoen and Lee constructed microscopic HSLAG tori in almost
Kähler manifolds (cf. \cite{JLS11,Lee12}).  
More recently, Biquard and Rollin constructed HSLAG representative of vanishing cycles in
the smoothing of extremal Kähler surfaces with $\QQ$-Gorenstein
singularities in~\cite{BR15}.  

We point out that the above constructions all deal with \emph{small}
or possibly \emph{singular}
Lagrangian submanifolds,  whereas the construction proposed in the
current paper
addresses the case of \emph{large} and \emph{smooth} Lagrangian submanifolds.
\end{rmk}

\subsection{Toric fibrations}
\label{sec:fibdef}
 SLAG fibrations are a conerstone of geometric mirror
symmetry but seem  to be rather scarce in nature.
Our initial motivation for this work was to exhibit various fibrations by
HSLAG submanifolds and show that contrarily to the SLAG case, they
arise quite often.

A \emph{local Lagrangian toric fibration} of a K\"ahler manifold $(M,\omega,J_0)$ is
a smooth family of Lagrangian maps $\ell_t:L\to M$,  with  parameter
$t\in B(0,r)\subset \RR^n$, for some $r>0$, where $n=\dim_\CC M$ and
$L=\TT^n$ is a real $n$-dimensional torus, such
that the map $B(0,r)\times L\to M$ defined by
$(t,x)\mapsto \ell_t(x)$ is a smooth embedding. 
If every torus $\ell_t:L\to M$ is HSLAG, we say that the local
fibration is HSLAG.

\begin{example}
Toric Kähler manifolds provide natural examples of local
toric HSLAG fibrations according to Theorem~\ref{theo:torichslag}. The fibration does not extend globally since
tori collapse at the boundary of the moment polytope.  

Other interesting (singular) HSLAG toric fibrations also occur
as part of the SYZ mirror symmetry conjecture. In this case, tori are SLAG hence HSLAG.
\end{example}

\begin{rmk}\label{remLagFIB=toric}
The reader may wonder why we restric to Lagrangian fibrations with
toric fibers as above: let $(M,\omega)$
be a $2n$-dimensional manifold endowed with a submersion $\pi:M\to
B^n$, where $B^n$ is an open ball in $\RR^n$, such that the fibers are Lagrangian compact submanifolds of
$M$. An elementary argument shows that in such a situation, the fibers must be $n$-dimensional manifolds
diffeomorphic to the real torus $\TT^n$. Indeed, let $x_1,\cdots,x_n$
be the standard coordinate functions on $\RR^n$ understood as functions
on $M$. These functions induce Hamiltonian vector fields
$Y_1,\cdots,Y_n$ on $M$. Since the fibers are Lagrangian, the vector
fields $Y_j$ must be tangent to the fibers of $\pi$. The fact that the functions
$x_i$ are invariant along the fibers and that $Y_j$ leave the
symplectic form invariant imply that the Lie brackets $[Y_i,Y_j]$
vanish. We have $n$ globally defined linearly independent vector fields
$Y_j$ along each Lagrangian fibers. As they are compact, any connected component of the fibers
must be diffeomorphic the real torus.
\end{rmk}

\begin{dfn}
\label{dfn:equivariant}
Given a HSLAG embedding $\ell:L\to M$ into a Kähler manifold
$(M,J_0,\omega)$, where $L$ and $M$ are closed, let $G_\ell$ be the
subgroup of Hamiltonian isometries $G$ preserving the image of $\ell:L\to M$. In
other words $u\in G_\ell$ if and only if $u\circ\ell(L)=\ell(L)$.
We denote by $G_\ell^o$ the identity component of $G_\ell$.

Let $\ell_t:L\to M$ be a local Lagrangian toric fibration such that $\ell_0=\ell$. Such fibration is
said to be \emph{$G^o_\ell$-invariant} if the action of $G^o_\ell$ preserves
the image of each embedding $\ell_t:L\to M$.  
\end{dfn}
Notice that with the above definition, $G^o_\ell$ acts trivially on
the parameter space $t\in B(0,r)$.

Our next result is an existence theorem for local HSLAG toric
fibrations:
\begin{theointro}
\label{theo:B}
Let $(M,\omega,J_0)$  be a closed K\"ahler manifold and
   $\ell_t:L\to M$ a  $G^o_{\ell_0}$-invariant local toric HSLAG fibration for $t\in
  B(0,r)\subset \RR^n$  such  that $\ell_0$ is rigid.

Then for all sufficiently small
positive perturbation of $J$, there exists a  Hamiltonian
transformation $v$ of $M$ and $\delta \in (0,r)$ such that $\tilde
\ell_t=v\circ \ell_t$  defines
a local HSLAG fibration in 
$(M,\omega, J)$ for $t\in B(0,\delta)$.
\end{theointro}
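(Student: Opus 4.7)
The strategy is to apply Theorem~\ref{theo:A} at $t=0$ to produce a single Hamiltonian transformation $v$, and then argue that $v\circ\ell_t$ is automatically HSLAG for all sufficiently small $t$ by exploiting the rigidity of $\ell_0$ and the $G^o_{\ell_0}$-invariance of the fibration. Given a small perturbation $J$ of $J_0$, Theorem~\ref{theo:A} yields, after replacing $J$ by a suitable representative of its $G$-orbit, a Hamiltonian transformation $v$ such that $v\circ \ell_0$ is HSLAG with respect to $J$. Pulling back by $v$, this is equivalent to saying that $\ell_0$ is HSLAG with respect to $\tilde J := v^* J$, which remains close to $J_0$. It therefore suffices to show that for $\tilde J$ close enough to $J_0$ and for $\delta>0$ small enough, every $\ell_t$ with $t\in B(0,\delta)$ is HSLAG with respect to $\tilde J$, since then $v \circ \ell_t = (v \circ \ell_t)$ will be HSLAG with respect to $J$.

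The heart of the argument is to set up the HSLAG condition as a parameterized problem in $(t, \tilde J)$. At $(0,J_0)$ the equation is solved by $\ell_0$, and the rigidity hypothesis implies that the kernel of the linearization consists only of infinitesimal Hamiltonian isometries. Since rigidity is an open condition, each $\ell_t$ is also rigid for $t$ small, and the original fibration provides a smooth $n$-dimensional slice through $\ell_0$ in the moduli space of $G^o_{\ell_0}$-invariant HSLAG Lagrangian tori near $\ell_0$. A parameterized version of the obstructed implicit function argument used in Theorem~\ref{theo:A}, applied to the combined parameter $(t, \tilde J)$, shows that for $\tilde J$ close to $J_0$ the corresponding moduli space of $G^o_{\ell_0}$-invariant HSLAG tori near $\ell_0$ is still an $n$-dimensional family. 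The $G^o_{\ell_0}$-invariance together with a uniqueness statement then identifies this family with $\{\ell_t\}$ up to reparameterization of $t$ and action of $G^o_{\ell_0}$, both of which preserve the images $\ell_t(L)$.

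The main obstacle will be to ensure that the \emph{single} transformation $v$ produced at $t=0$ works uniformly for all nearby $t$, rather than requiring a $t$-dependent correction $v_t$ from separate applications of Theorem~\ref{theo:A}. This amounts to showing that the $G$-orbit ambiguity appearing in Theorem~\ref{theo:A} is compatible across the family: along the $G^o_{\ell_0}$-invariant fibration, any such ambiguity must lie in $G^o_{\ell_0}$, which by definition preserves each $\ell_t(L)$ and hence introduces no genuine correction to the embeddings. Making this precise requires carefully tracking the dependence of the finite-dimensional reduction of Theorem~\ref{theo:A} on the parameter $t$, and exploiting $G^o_{\ell_0}$-equivariance to propagate the solution chosen at $t=0$ to nearby $t$ within the same Hamiltonian gauge; shrinking $r$ to some $\delta$ accommodates the uniform estimates.
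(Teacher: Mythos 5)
Your proposal has a genuine gap: it never uses the hypothesis that $J$ is a \emph{positive} perturbation (in the sense of \S\ref{sec:positive}), and that hypothesis is precisely what the argument cannot do without. The HSLAG problem is obstructed: the implicit function theorem (Proposition~\ref{prop:hslag}) only produces \emph{relatively} HSLAG embeddings, i.e.\ solutions of $d^*\alpha_H\in\fK_{h+k}$, and a genuinely HSLAG torus is obtained only at a critical point of the modified volume functional $\voltilde$ on $G/G^o_\ell$ (Proposition~\ref{prop:critical}). Consequently, for a fixed nearby $\tilde J=v^*J$ there is no reason that the moduli space of HSLAG tori near $\ell_0$ is an $n$-dimensional family sweeping out all the $\ell_t$: your parameterized-moduli step silently assumes the finite-dimensional obstruction vanishes along the whole slice, which is exactly what fails for a generic perturbation. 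The gauge element killing the obstruction at parameter $t$ is a critical point $u_t$ of $\voltilde$ on the $G$-orbit, and it genuinely moves with $t$; your assertion that ``any such ambiguity must lie in $G^o_{\ell_0}$'' is the statement to be proved, not a consequence of invariance. The paper flags this as the central obstacle: one cannot make a consistent choice of minimum of the volume functional across a family of tori unless the minima have special properties --- and positivity of $J$ is, by definition, the non-degeneracy of a local minimum of $\voltilde_J$ on $G/G^o_\ell$.

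The paper's actual proof runs the finite-dimensional reduction fiberwise in $t$: positivity at $\ell_0$ persists for every $\ell_t$ with $t$ small (stability of a non-degenerate minimum), the parameterized IFT gives relatively HSLAG deformations $\ell_{t,h_{t,u}}$ for all $u\in G$, and the non-degeneracy is what allows one to select the minimizing $u_t$ \emph{smoothly} in $t$; Proposition~\ref{prop:critical} then shows each $u_t\cdot\ell_{t,h_{t,u_t}}$ is HSLAG (Proposition~\ref{prop:almostsolution}), from which the fibration statement follows. Note also Example~\ref{example:jump}: under a perturbation of the round sphere the HSLAG fibration jumps and the Hamiltonian transformation is far from the identity, which illustrates why a single application of Theorem~\ref{theo:A} at $t=0$ cannot be rigidly propagated along the family by soft uniqueness arguments. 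Even with positivity, the construction produces a $t$-dependent family $v_t$ at the intermediate stage rather than the constant gauge your argument presupposes; the passage to a single fibration only happens at the very end, after the smooth family of solutions has been built.
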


The definition of \emph{positive perturbations} of $J_0$ shall be
given at \S\ref{sec:positive}. 
 Although we need this technical assumption for the proof
of Theorem~\ref{theo:B}, we conjecture that every generic almost complex
structure is positive. By definition, the set of positive almost
complex structures is open, once an appropriate topology is introduced on
$\AC_\omega$. In Theorem~\ref{theo:pospertintro} and
Theorem~\ref{theo:pospert}, we manage to prove that if $\ell_0:M\to L$ is rigid
and stable, then the space of positive almost complex structure form a non-empty
open set of $\AC$ with $J_0$ in its closure. To avoid technical
aspects of the statement, we state the result as follows:
\begin{theointro}
\label{theo:pospertintro}
  Let $(M,\omega,J_0)$ be a Kähler manifold and $\ell:L\to M$ be a rigid
  and stable HSLAG, where $M$ and $L$ are closed.

Then, the open set of positive deformations is not empty and has $J_0$
in its closure. More precisely,  there
exists a smooth path of almost complex structures $\tilde
J_s\in\AC_\omega$ defined for $s\in [0,\epsilon)$ with $\epsilon >0$,
  such that $\tilde J_0= J_0$ and $J_s$ is positive for all $s>0$.
\end{theointro}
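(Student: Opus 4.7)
The statement concerns the notion of \emph{positive} perturbation defined in \S\ref{sec:positive}. From the role positivity plays in Theorem~\ref{theo:B}---namely, ensuring that the finite-dimensional obstruction to HSLAG arising from the non-trivial Hamiltonian isometry group $G$ can actually be resolved---I would interpret positivity as a strict positive-definiteness condition on the Hessian of the $G$-invariant obstruction functional $F_J$ whose critical points in a $G$-orbit parametrize nearby HSLAG embeddings (compare with the third remark following Theorem~\ref{theo:A}). Under the rigidity and stability hypotheses, the Hessian of $F_{J_0}$ is positive semi-definite with kernel equal to $\Lie(G_\ell^o)$, so $J_0$ itself fails to be positive only because of this kernel. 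The plan is to produce a smooth family $\tilde J_s$ with $\tilde J_0=J_0$ that lifts this degeneracy for $s>0$.

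First, I would make the finite-dimensional reduction explicit, following the scheme behind Theorem~\ref{theo:A}: obtain a smooth $G$-equivariant functional $F_J$ defined on a neighborhood of the identity coset in $G/G^o_\ell$ whose critical points produce HSLAG representatives in the $G$-orbit of $J$. Stability gives positive semi-definiteness of $\mathrm{Hess}\,F_{J_0}$ at the identity coset, and rigidity identifies its kernel with $\Lie(G^o_\ell)$. Consequently, positivity of $J$ amounts to strict positive-definiteness of $\mathrm{Hess}\,F_J$ at its critical point, and this is an open condition on $J$ in a suitable Hölder topology.

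Second, I would select a variation $\dot J=\partial_s\tilde J_s|_{s=0}\in T_{J_0}\AC_\omega$ that couples effectively with the degenerate kernel $\Lie(G^o_\ell)$, in the sense that the mixed derivative
\[
\left.\frac{\partial}{\partial s}\right|_{s=0}\mathrm{Hess}\,F_{\tilde J_s}\Big|_{\Lie(G^o_\ell)}
\]
is strictly positive-definite as a quadratic form on the kernel. A Taylor expansion of $F_{\tilde J_s}$ jointly in $s$ and in directions along $\Lie(G^o_\ell)$ then shows that for small $s>0$ the full Hessian $\mathrm{Hess}\,F_{\tilde J_s}$ becomes strictly positive-definite, making $\tilde J_s$ positive. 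The openness of the positive set follows from continuity of the Hessian with respect to $J$, and $J_0$ sits in its closure by construction.

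The hard part is producing a variation $\dot J$ for which the mixed derivative above is strictly positive-definite rather than merely non-degenerate. Here one must compute this mixed derivative in terms of geometric data attached to the Lagrangian $\ell(L)$ and the action of $G^o_\ell$ on it, and then exhibit a $\dot J$---presumably built from a transverse perturbation of the Kähler form along the orbit of the isometry action on $\ell(L)$---for which this quadratic form is strictly positive. This uses the non-triviality of the $G$-action on $\ell(L)$ and the stability hypothesis, which together prevent the mixed derivative from vanishing identically, so that the set of admissible $\dot J$ is the complement of a proper linear subspace of $T_{J_0}\AC_\omega$ and is in particular non-empty; integrating such a $\dot J$ to an honest path $\tilde J_s\in\AC_\omega$ (using the standard description of almost complex structures compatible with $\omega$) then produces the desired family.
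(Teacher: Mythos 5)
Your overall shape agrees with the paper's (deform $J_0$ along a path that breaks the degeneracy at first order in $s$; openness and the closure statement then come cheap), but your description of the degeneracy at $J_0$ is wrong, and the error propagates. Positivity means the modified volume functional $\voltilde_J:G/G^o_\ell\to\RR$ admits a non-degenerate local minimum; at $J=J_0$ this functional is \emph{constant} -- its Hessian vanishes identically on all of $T_{[\id]}(G/G^o_\ell)$, because Killing potentials restrict into $\cK_L=\ker\Box$ -- not ``positive semi-definite with kernel $\Lie(G^o_\ell)$'' (those directions are already quotiented out). So your mixed derivative $\partial_s\,\mathrm{Hess}$ must be made positive definite on the \emph{whole} tangent space of $G/G^o_\ell$, not merely on $\Lie(G^o_\ell)$, and your non-emptiness argument then fails: the assignment $\dot J\mapsto \partial_s\,\mathrm{Hess}$ is linear, so the admissible $\dot J$ form the preimage of the open cone of positive-definite quadratic forms -- a convex cone that can perfectly well be empty -- and is \emph{not} the complement of a proper linear subspace; once $\dim G/G^o_\ell>1$, ``not identically zero'' does not yield positivity. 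The step you defer as ``the hard part'' is exactly the content of the paper's Theorem~\ref{prop:evelinedeg}, and it is resolved by an explicit construction rather than genericity: identify a neighborhood of $\ell(L)$ with a neighborhood of the zero section of $T^*L$, set $\varphi(\alpha)=\frac{1}{-4(n+2)}d^g(\pi(\alpha),\alpha)^4$ (fourth power of the fiberwise distance), and flow $\frac{\del}{\del s}J_s=-\cL_{X_\varphi}J_s+J_s\cL_{X_\varphi}J_s$. Because $\varphi$ vanishes to high order along $\ell(L)$, the structures $J_s$, the induced metric, and the mean curvature along $\ell$ are all unchanged, so $\ell$ stays HSLAG for every $s$ -- a point your proposal never secures, although your joint Taylor expansion tacitly assumes the critical point persists along the path. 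Since $\Delta^g\varphi=r_p^2+\cO(r_p^3)$, the first $s$-derivative of the volume, $\int_L(\Delta^{g}\varphi)\circ u\circ\ell$, is a squared-distance well whose Hessian in $u$ is positive on every direction transverse to $G^o_\ell$: the positivity comes from the sign of an explicit Laplacian, not from avoiding a bad subspace of $T_{J_0}\AC_\omega$.

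A second genuine gap: you work with a single ``obstruction functional,'' whereas the paper must pass from the \emph{standard} volume (for which Theorem~\ref{prop:evelinedeg} gives the non-degenerate minimum) to the \emph{modified} volume $\voltilde$ built from the implicit-function-theorem solution $\phi(0,J)$ of Proposition~\ref{prop:hslag}; this transfer is where stability actually enters. The second variation of $\voltilde_{J_s}$ at the identity is $\ip{\Box_{J_s}(\dot k+\dot h),\dot k+\dot h}$, with $\dot k$ the Killing potential and $\dot h$ the IFT correction in $\cH$; the paper gets $\ip{\Box_{J_s}\dot k,\dot k}>0$ from the construction, $\ip{\Box_{J_s}\dot h,\dot h}\geq 0$ from stability, and controls the cross term because $\Box_{J_s}\dot k\to 0$ as $s\to 0$ while the spectrum of $\Box_{J_s}$ is uniformly bounded below on $\cH$. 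In your write-up stability is invoked only to claim the mixed derivative ``does not vanish identically,'' which is neither its role nor sufficient. In short: right skeleton, but the two decisive ingredients -- the explicit potential that freezes the geometry along $\ell(L)$ while creating a transverse volume well, and the stability-based comparison of $\vol$ with $\voltilde$ -- are absent, and the genericity reasoning substituted for the first is invalid.
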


In particular, Theorem~\ref{theo:pospertintro} shows that the
statement in Theorem~\ref{theo:B}, with the additional assumption of
stability of $\ell_0$, is not empty. This applies to the case of toric
HSLAG fibrations coming from toric Kähler manifolds given by Theorem~\ref{theo:torichslag}.

\begin{rmk}
\label{rmk:pity}
Considering $G^o_{\ell}$-invariant HSLAG toric fibration is not
really restrictive. Indeed, we shall prove in
Theorem~\ref{theo:equivexist} that  given a rigid
HSLAG embedding $\ell:L\to M$ into a Kähler manifold $(M,\omega,J_0)$, where $L$ is a real torus, it is always
possible to find a $G_\ell^o$-invariant HSLAG local fibration
$\ell_t:L\to M$ such that $\ell_0=\ell$, under some mild
assumptions. In particular, these assumptions apply in the context 
of toric Kähler manifolds (cf. Proposition \ref{prop:equivtoric}), which
constitute our main class of examples and applications. \end{rmk}

\begin{rmk}
The Hamiltonian transformation $v$ provided by Theorem \ref{theo:B}
may not be close to the identity as it comes from an auxiliary finite
dimensional minimization problem in the spirit of Theorem~\ref{theo:A}.
Some evidence of this fact are illustrated with a particular example (cf. Example~\ref{example:jump}).

   Theorem~\ref{theo:B} applies to the case of $\CP^n$
  with its Fubini-Study metric. The result only deals with positive
  pertubation of the metric, but  we expect it to hold under softer
  genericity assumptions.

More generally, we would expect that the standard singular HSLAG fibration by Lagrangian tori of 
 $\CP^n$  can be globally deformed for a
generic choice of K\"ahler metric close to the Fubini-Study metric on $\CP^n$. 
\end{rmk}

We gather our results in the case of toric manifolds in the following
corollary:
\begin{corintro}
  Let $(M,\omega,J_0)$ be a closed toric Kähler manifolds endowed with the Guillemin
  metric and $\mu:M\to P$ the corresponding moment map. Let $t_0$ be a
  point in the interior of the moment polytope $P$. Then 
  \begin{enumerate}
  \item for every compatible almost complex structure $J$
    sufficiently close to $J_0$, there exists a Hamiltonian
    transformation $v$ of $(M,\omega)$ such that $v(\mu^{-1}(t_0))$ is
    a HSLAG submanifold of $(M,\omega,J)$.
\item The space of positive deformations $J$ of the almost complex
  structure of $(M,\omega,J_0)$ with respect to $\mu^{-1}(t_0)$ is a
  non empty open set with $J_0$ in its closure. Is $J$ is positive and
  sufficiently close to $J_0$, there exists a Hamiltonian
  transformation $v$ with the property that each submanifold
  $v(\mu^{-1}(t))$ is HSLAG in $(M,\omega,J)$ for $t$ sufficiently close to $t_0$ in $P$.
  \end{enumerate}
\end{corintro}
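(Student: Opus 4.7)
The plan is to assemble the corollary from the four theorems already proved in the paper. Let $\ell_0 : L \hookrightarrow M$ denote the inclusion of the Lagrangian torus $\mu^{-1}(t_0)$. By Theorem~\ref{theo:torichslag}, $\ell_0$ is a HSLAG embedding which is both rigid and stable in $(M,\omega,J_0)$; this is the hypothesis shared by Theorems~\ref{theo:A}, \ref{theo:B} and~\ref{theo:pospertintro}, so each of them applies to $\ell_0$.

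For part (1) I would apply Theorem~\ref{theo:A} to $\ell_0$ with $G$ the group of Hamiltonian isometries of $(M,\omega,J_0)$ — note that $G$ contains the toric torus acting on $M$, hence is nontrivial, so the orbit--representative formulation of Theorem~\ref{theo:A} is genuinely necessary. Theorem~\ref{theo:A} provides a $G$-invariant neighborhood $W$ of $J_0$ in $\AC_\omega$ and a map $\psi:W\to\Ham_\omega$ such that in each $G$-orbit inside $W$ one can find a representative $J'$ for which $\psi(J')\cdot\ell_0$ is HSLAG with respect to $J'$. To pass from ``a representative in each orbit'' to ``every $J$ close to $J_0$'', I would fix $J\in W$ arbitrary and pick $u\in G$ with $J'=u^{*}J$ in the distinguished representative set. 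Since $u$ is a biholomorphic isometry from $(M,\omega,J')$ to $(M,\omega,J)$, and being HSLAG is preserved under such isometries, the submanifold
\[
u\bigl(\psi(J')\circ\ell_0(L)\bigr)
\]
is HSLAG in $(M,\omega,J)$. Setting $v:=u\circ\psi(J')\in\Ham_\omega$ (both factors are Hamiltonian since $G$ consists of Hamiltonian isometries) yields $v(\mu^{-1}(t_0))$ HSLAG in $(M,\omega,J)$, proving part (1).

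For part (2), I would first invoke Theorem~\ref{theo:pospertintro}: since $\ell_0$ is rigid and stable, the set of positive deformations of $J_0$ (with respect to $\ell_0$) is a non-empty open subset of $\AC_\omega$ with $J_0$ in its closure; this gives the first assertion of (2). To upgrade to a family of HSLAG tori I would next produce a $G^o_{\ell_0}$-invariant local toric HSLAG fibration $\ell_t:L\to M$ extending $\ell_0$ for $t\in B(0,r)$. Here the toric structure furnishes such a fibration essentially for free: the moment map fibers $\mu^{-1}(t)$ for $t$ close to $t_0$ in $P$ form a smooth local Lagrangian toric fibration (by Remark~\ref{remLagFIB=toric} together with the surjectivity of the differential of $\mu$ at interior points), each fiber is HSLAG by Theorem~\ref{theo:torichslag}, and the fibration is tautologically $G^o_{\ell_0}$-invariant because the toric action preserves each $\mu^{-1}(t)$ — this is precisely the content invoked in Remark~\ref{rmk:pity} via Proposition~\ref{prop:equivtoric}. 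With these hypotheses in hand, Theorem~\ref{theo:B} applied to any positive $J$ sufficiently close to $J_0$ produces a Hamiltonian transformation $v$ and $\delta\in(0,r)$ such that $v\circ\ell_t$ defines a local HSLAG fibration in $(M,\omega,J)$ for $t\in B(0,\delta)$; translating back through the moment map gives the desired statement that $v(\mu^{-1}(t))$ is HSLAG in $(M,\omega,J)$ for all $t$ close to $t_0$.

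The main obstacles are therefore not in the combinatorics of the argument but in verifying the hypotheses: (i) that the Guillemin metric is a Kähler metric on the closed toric manifold $(M,\omega,J_0)$ to which the preceding theorems apply, so that Theorem~\ref{theo:torichslag} is available and delivers rigidity and stability simultaneously; and (ii) that the local toric Lagrangian fibration coming from $\mu$ is $G^o_{\ell_0}$-invariant and HSLAG — both of which are direct consequences of the toric geometry already set up in the paper. Granting these, parts (1) and (2) are immediate specializations of Theorems~\ref{theo:A} and~\ref{theo:B} respectively, with the positivity clause in (2) coming from Theorem~\ref{theo:pospertintro}.
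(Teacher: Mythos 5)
Your proposal is correct and follows exactly the route the paper intends: the corollary is stated without a separate proof precisely because it is the assembly you describe — Theorem~\ref{theo:torichslag} (via Proposition~\ref{propGMetrigid}) supplies rigidity and stability of the Guillemin-metric fibers, Theorem~\ref{theo:A} gives part (1) after conjugating by the group element $u$ as you do, and Theorem~\ref{theo:pospertintro} together with Theorem~\ref{theo:B} applied to the moment-map fibration gives part (2), with the $G^o_{\ell_0}$-invariance coming from Proposition~\ref{prop:equivtoric} as in Remark~\ref{rmk:pity}. The only point you gloss over — that $G^o_{\ell_0}$ equals the toric torus $\TT^n$ rather than something larger — is settled by items (1)--(2) of Proposition~\ref{prop:equivtoric} (a Hamiltonian isometry acting trivially on the Lagrangian torus is the identity, so the action is effective and $G^o_{\ell_0}$ is a torus of dimension at most $n$ containing $\TT^n$), so the citation you give does carry the needed content.
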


\section{Basic theory of HSLAG}
In the rest of this paper, $M$ and $L$ are always a closed (i.e. compact without boundary) manifolds, unless specified otherwise. In addition $\dim M =2\dim L$, $\omega$ is a given symplectic form on $M$  and $\ell:L\to M$ is a Lagrangian embedding, that is an embedding such that $\ell^*\omega=0$.
\subsection{The Euler-Lagrange equation of HSLAG}
HSLAG manifolds are defined by a variational problem. The
corresponding Euler-Lagrange equation is easily recovered as we shall
explain now. Let $H$ be the mean curvature vector
field along $\ell:L\to M$ defined  using the metric~$g_J$. The \emph{Maslov
form} is the $1$-form $\alpha_H\in \Gamma(\Lambda^1L)$  defined by
$$
\alpha_H=\ell ^*\omega(H,\cdot).
$$
Let $f_t\in \Ham_\omega$ be
a family of Hamiltonian transformations such that $f_0=\id|_M$. Then
$$
V=\left . \frac{d}{dt}f_t\right |_{t=0}
$$
is a Hamiltonian vector field on $(M,\omega)$. 
Let $v:M\to \RR$ be a Hamiltonian function for $V$ (i.e. such that
$dv=\iota_V\omega$). The family $f_t$ induces a Hamiltonian deformation of the map $\ell:L\to M$
defined by  $\ell_t=f_t\circ\ell$. The resctriction of the vector field $V$ along
$\ell:L\to M$ is precisely the infinitesimal variation of the family $\ell_t$ at
$t=0$. The usual first variation formula of the volume gives
\begin{align*}
\left . \frac d{dt}\vol(\ell_t,g_J)\right |_{t=0} &=- \int_L\ip{H,V}\vol^{g_L}\\
&=-\int_L\ip{\alpha_H,\alpha_V}\vol^{g_L} \\
&=-\int_L\ip{\alpha_H,\ell ^*dv}  \vol^{g_L} \\
&=-\int_L\ip{d^*\alpha_H,\ell^*v}  \vol^{g_L} 
\end{align*}
where $g_L=\ell^*g_J$.

A critical point is obviously given by the equation
$$d^*\alpha_H=0,$$
where $d^*$ is the operator defined on the space of differential forms
on $L$, using the metric $g_L$.

 The second variation of the volume is obtained by
differentiating the $t$-dependent quantity $d^*\alpha_H$ along the
deformation $\ell_t$. It turns out that the second variation is of the form 
$$
\Box \ell^*v
$$
where $\Box$ is an elliptic
operator of order $4$ acting functions on $L$.
More precisely, we have the following formula~(cf. \cite{JLS11}), which holds
for any Lagrangian embedding (not necessarily stationary) in a
K\"ahler manifold 
\begin{equation}
\label{eq:box}
\Box u =\Delta^2u + d^*\alpha_{\Ric^\perp(J\nabla
  u)}-2d^*\alpha_{B(JH,\nabla u)} - JH\cdot JH\cdot u  .
\end{equation}
Here $\Delta$ is the Riemannian Laplacian on $(L,g_L)$, $B$ is the second fundamental form of $\ell:L\to M$ and
$\Ric^\perp$ is defined by $\Ric(x,y)=\ip{\Ric^\perp(x),y}$ for $x,y$
normal vectors to $L$. If the almost complex structure is not
integrable, the formula is more complicated, but the leading term
$\Delta^2$ remains unchanged.

 We introduce the Kernel of this operator
$$
\cK_L=\ker\Box.
$$

\begin{rmk}
\label{rmk:slag}
  If  $\ell:L\to M$ is a minimal submanifold (i.e. $H=0$), the
  operator $\Box$ is simpler. In particular, we have
$$
\ip{\Box u,u}=\int_L |\Delta u|^2 - \Ric(J\nabla u,J\nabla u) \vol^{g_L}
$$
and it follows that $\cK_L=\RR$ for any K\"ahler manifold of non
positive Ricci curvature. These conditions are fullfield in the case of
\begin{itemize}
\item \emph{special Lagrangians} in Calabi-Yau manifolds,
\item  standard Lagrangian tori
in the flat torus $\CC^2/\Gamma$.
\end{itemize}
\end{rmk}

Generally, $\cK_L$ is not reduced to $\RR$ and $\Box$ need not be selfadjoint.
However, we have the following lemma:
\begin{lemma}
\label{lemma:sa}
Given  a K\"ahler manifold $(M,\omega,J)$ and a Lagrangian embedding $\ell:L\to M$,
the operator $\Box$ on $L$ can be written 
$$
\Box v = D v - JH\cdot JH\cdot v
$$
where $D$ is selfajoint.

In particular, if $\ell:L\to M$ is HSLAG, the vector field $JH$ is
divergence free on $L$ and $\Box$ is selfadjoint.
\end{lemma}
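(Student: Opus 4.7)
The plan is to split the formula for $\Box$ as $\Box = D - (JH)\cdot(JH)$ with
$$Du = \Delta^2 u + d^*\alpha_{\Ric^\perp(J\nabla u)} - 2\, d^*\alpha_{B(JH,\nabla u)},$$
and to show that each of the three summands in $D$ is formally selfadjoint, independently of whether $\ell$ is HSLAG. The term $\Delta^2$ is obviously selfadjoint. For the Ricci term I would use that $J:TL\to NL$ is an isometry (because $L$ is Lagrangian and $g$ is Hermitian), so that setting $\tilde R\xi := J\Ric^\perp(J\xi)$ defines an endomorphism of $TL$ with $\ip{\tilde R\xi,\eta} = -\Ric(J\xi,J\eta)$. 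Symmetry of the ambient Ricci tensor forces $\tilde R$ to be a symmetric endomorphism of $TL$, and $\alpha_{\Ric^\perp(J\nabla u)}$ equals the $1$-form $\tilde R(du)$, so the operator $u\mapsto d^*\tilde R(du)$ is selfadjoint by a standard integration by parts.

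The main algebraic input is the selfadjointness of $u \mapsto d^*\alpha_{B(JH,\nabla u)}$. For this I would invoke the classical fact that the \emph{cubic form}
$$C(X,Y,Z) := \omega(B(X,Y),Z),\qquad X,Y,Z \in TL,$$
is totally symmetric on any Lagrangian in a Kähler manifold. This follows from $\nabla^M\omega=0$ together with $\omega|_{TL\otimes TL}=0$: differentiating the identity $\omega(Y,Z)=0$ along $X\in TL$ gives $\omega(\nabla^M_X Y,Z) = -\omega(Y,\nabla^M_X Z)$, and since $[X,Y]\in TL$ one also has $\omega(\nabla^M_X Y,Z) = \omega(\nabla^M_Y X,Z)$; combined, these yield the full symmetry of $C$. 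With $V := JH$, one then has $\alpha_{B(V,\nabla u)}(\xi) = C(V,\nabla u,\xi)$, and
$$\int_L u_1\, d^*\alpha_{B(V,\nabla u_2)}\,\vol^{g_L} = \int_L C(V,\nabla u_2,\nabla u_1)\,\vol^{g_L},$$
which is visibly symmetric in $(u_1,u_2)$ by the total symmetry of $C$. This completes the selfadjointness of $D$.

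For the HSLAG part, I would observe that since $H$ is normal to $L$ and $L$ is Lagrangian, the defining formula $\alpha_H(\xi) = \omega(H,\xi) = g(JH,\xi)$ identifies $\alpha_H$ with the $g_L$-dual of the tangent field $JH$. The standard identity $d^*V^\flat = -\mathrm{div}(V)$ then yields $d^*\alpha_H = -\mathrm{div}(JH)$, so the HSLAG equation $d^*\alpha_H = 0$ is equivalent to $\mathrm{div}(JH)=0$. A divergence-free vector field $V$ on the closed Riemannian manifold $(L,g_L)$ satisfies $V^* = -V$ as a first-order operator, so $V\cdot V$ is selfadjoint; combined with selfadjointness of $D$, this shows that $\Box = D - (JH)(JH)$ is selfadjoint at every HSLAG embedding.

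The only delicate step I anticipate is recognising and proving the total symmetry of the cubic form $C$; everything else reduces to integration by parts and basic Kähler/Lagrangian compatibilities.
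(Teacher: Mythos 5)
Your proposal is correct and follows essentially the same route as the paper: split off $D=\Delta^2+d^*\alpha_{\Ric^\perp(J\nabla\cdot)}-2d^*\alpha_{B(JH,\nabla\cdot)}$, verify each summand is selfadjoint by integration by parts using the symmetry of the ambient Ricci tensor and of the cubic tensor $\ip{JB(\cdot,\cdot),\cdot}$, and then note that the HSLAG equation $d^*\alpha_H=0$ is exactly $\mathrm{div}(JH)=0$, which makes $v\mapsto -JH\cdot JH\cdot v$ selfadjoint. The only difference is cosmetic and in your favor: the paper merely asserts the total symmetry of $S(u,v,w)=\ip{JB(u,v),w}$, whereas you actually prove it from $\nabla^M\omega=0$ and $\omega|_{TL\otimes TL}=0$.
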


\begin{proof}
Using $L^2$ inner product on $L$
  with  $g_L$ the induced metric, we see that
$\ip{\Delta^2v,w}=\ip{v,\Delta^2 w}$. Using the fact that
  $\alpha_{J\nabla w} = dw$ on $L$, we find 
$$
\ip{d^*\alpha_{\Ric^\perp(J\nabla
    v)},w}=\ip{\alpha_{\Ric^\perp(J\nabla v)},\alpha_{J\nabla
    w}}=\int_L \Ric(J\nabla v,J\nabla w) \vol^{g_L}
$$
Since the Ricci tensor is symmetric, we deduce that
$$
\ip{d^*\alpha_{\Ric^\perp(J\nabla
    v)},w} =
\ip{v,d^*\alpha_{\Ric^\perp(J\nabla
    w)}}
$$
Similarly, we compute
$$
\ip{d^*\alpha_{B(JH,\nabla v)},w } = \ip {\alpha_{B(JH,\nabla v)},
  \alpha_{J\nabla w}} = \ip{\alpha_{B(JH,\nabla v)},J\nabla v}.
$$
The tensor $S(u,v,w)= \ip{JB(u,v),w}$ is symmetric.
We deduce that
$$
\ip{d^*\alpha_{B(JH,\nabla v)},w } = \ip{v,d^*\alpha_{B(JH,\nabla w)} }.
$$
This proves the first statement of the lemma.

 Integration by part gives
$$\ip{-JH\cdot JH\cdot v,w}=\ip {JH\cdot v,JH\cdot w} +\int vw JH\cdot\vol^{g_L}.$$
If the embedding is HSLAG, then $d^*\alpha_H=0$. But this equation is
equivalent to the fact that $\mathrm{div} (JH)=0$ and in turns
${JH}\cdot\vol^{g_L}=\mathrm{div}(JH)\vol^{g_L}=0$. It follows that the
operator $v\mapsto -JH\cdot JH\cdot v$ is also selfadjoint.
\end{proof}

\subsection{Hamiltonian isometries, rigidity and stability}
Suppose that $(M,\omega,J_0)$ is a K\"ahler manifold and 
 and $\ell:L\to M$ is a HSLAG embedding.
Let $G$ be the
group of Hamiltonian isometries of the K\"ahler manifold. Any one
parameter subgroup $f_t:M\to M$ of $G$ gives rise to a deformation
$\ell_t:=f_t\circ\ell$ of $\ell$. However
$$
g_{L,t}=\ell_t^*g_{J_0}=\ell^*f_t^*g_{J_0} =\ell^*g_{J_0} = g_L
$$
since $f_t$ is an isometry. In particular the quantity $d^*\alpha_H$
is independent of $t$, therefore $\Box \ell^*v=0$,
for a Hamiltonian function $v:M\to\RR$ given by the variation~$f_t$.

The space of Hamiltonian functions induced by  $1$-parameter families
of Hamiltonian isometries of $(M,\omega,J_0)$ is the space of Killing
potentials denoted $\cK_{M}$.
Our observation shows that
 there is a canonical \emph{restriction map}
 \begin{align*}
\ell^*:   \cK_M &\to \cK_L\\
v &\mapsto v\circ \ell
 \end{align*}
where $\cK_L$ is the kernel of the operator $\Box$.
 \begin{dfn}
\label{dfn:rigidstable}
For a K\"ahler manifold $(M,\omega,J_0)$,
   a Hamiltonian stationary Lagrangian embedding $\ell:L\to M$ is called
   \emph{rigid} if the associated restriction map $\cK_M\to\cK_L$ is surjective.

If in addition, the operator $\Box$ is positive on a complement of the
kernel $\cK_L$, we says that $\ell:L\to M$ is \emph{stable}.
 \end{dfn}
According to Definition~\ref{dfn:rigidstable}, a Hamiltonian
Lagrangian embedding is rigid if, and only if, infinitesimal
Hamiltonian stationary deformations of the Lagrangian can only come
from of (globally defined) infinitesimal Hamiltonian
isometry of the Kähler manifold $(M,\omega,J_0)$.

 \begin{examples}$\qquad$
   \begin{enumerate}
\item   If $\cK_L=\RR$, the Lagrangian embedding $\ell:L\to M$ must be
   rigid.
\item
In particular all examples given by Remark~\ref{rmk:slag} (SLAG in
Calabi-Yau manifolds,
Lagrangian tori in the flat torus, HSLAG with $H=0$ in nonpositive
Ricci-curvature K\"ahler manifolds) are rigid.
\item 
It is also known that the Clifford torus in $\CP^n$ is rigid and stable
\cite{O90,O93}.
   \end{enumerate}
 \end{examples}

\section{Examples of Lagrangians toric fibrations}
\label{sec:toricfib}
Fibrations of Calabi-Yau manifolds  by special Lagrangian (SLAG) tori play a central role in mirror
symmetry. Unfortunately, these fibrations seem to be scarce
in nature. 
Hamiltonian stationary Lagrangians (HSLAG) are a natural generalization of
SLAG and they make sense on every (almost) K\"ahler manifold. 
We then turn to the question of existence of HSLAG fibrations for
K\"ahler manifolds.

A large pool of examples of such fibrations arise from toric K\"ahler
geometry. 
A toric K\"ahler manifold is endowed with a Hamiltonian isometric toric action.
Generic orbits of the torus action are Lagrangian tori. Since the
metric is invariant under the torus action, it follows that the mean
curvature of the orbits is also invariant and, in turn, that the Maslov form
$\alpha_H$ must be parallel. In particular $d^*\alpha_H=0$ and we have
the following lemma:
\begin{lemma}
\label{lemma:toric}
The fibration by Lagrangian tori of a toric K\"ahler
 manifold has the property that each smooth fiber
 (i.e. corresponding to an interior point of the polytope) is Hamiltonian
 stationary with respect to the toric metric.
\end{lemma}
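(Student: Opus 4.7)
The plan is to exploit the transitive torus action on each regular fiber and the invariance of the Kähler structure to show that the Maslov form is parallel, hence coclosed, which is the Euler--Lagrange equation from the first subsection.

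First I would fix an interior point $p$ of the moment polytope $P$ and set $L_p = \mu^{-1}(p)$. By standard toric geometry, $L_p$ is a smooth embedded Lagrangian submanifold on which the real torus $\TT^n$ acts freely and transitively; in particular $L_p$ is diffeomorphic to $\TT^n$ and the Lagrangian embedding is $\TT^n$-equivariant.

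Next, since the toric Kähler metric $g_{J_0}$ and the symplectic form $\omega$ are both $\TT^n$-invariant, $\TT^n$ acts by isometries on $(M,g_{J_0})$ preserving $L_p$, so the induced metric $g_L = \ell^* g_{J_0}$ on $L_p$ is $\TT^n$-invariant. Because the action is transitive, $g_L$ is a homogeneous (hence flat) metric on the torus. By naturality of the second fundamental form under isometries, the mean curvature vector field $H$ of $\ell : L_p \to M$ is also $\TT^n$-invariant, and therefore the Maslov $1$-form
\[
\alpha_H = \ell^*\omega(H, \cdot) \in \Gamma(\Lambda^1 L_p)
\]
is a $\TT^n$-invariant $1$-form on the homogeneous flat torus $(L_p, g_L)$.

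The key observation is then that any $\TT^n$-invariant differential form on $(L_p, g_L)$ has constant coefficients in the flat coordinates coming from the Lie algebra of $\TT^n$, hence is parallel with respect to the Levi-Civita connection of $g_L$. In particular $\alpha_H$ is parallel and thus coclosed: $d^* \alpha_H = 0$. By the Euler--Lagrange equation derived in the previous subsection, this is exactly the condition for $\ell : L_p \to M$ to be a HSLAG embedding. There is no real obstacle here: the entire content of the lemma is the passage from $\TT^n$-invariance of $\alpha_H$ to its being parallel, which is immediate on a homogeneous flat torus.
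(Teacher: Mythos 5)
Your proof is correct and follows essentially the same route as the paper, which also deduces from the $\TT^n$-invariance of the toric metric that the mean curvature, and hence the Maslov form $\alpha_H$, is invariant and therefore parallel on the orbit, giving $d^*\alpha_H=0$. Your added details (freeness and transitivity of the action on interior fibers, flatness of the invariant metric, constant coefficients of invariant forms) merely make explicit the step ``invariant $\Rightarrow$ parallel'' that the paper states directly.
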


%

\subsection{K\"ahler reductions}
Let $(\tilde{M},\tilde{\omega})$ be a symplectic manifold and $G$ a connected compact subgroup of Hamiltonnian diffeomorphisms. Assume that $\mu : \tilde{M} \ra \g^*$ is a $G$--equivariant momentum map and that $0\in \g^*$ is a regular value of $\mu$. Then $N=\mu^{-1}(0)$ is a $G$--invariant submanifold and $G$--orbits are coisotropic, so the quotient $\pi :\tilde{M}\ra M=\mu^{-1}(0)/G$  inherits of a symplectic structure $\omega$ defined as $\pi^*\omega =\tilde{\omega}_{|_{TN}}$. The symplectic orbifold obtained this way is called the \emph{symplectic reduction} of $(\tilde{M},\tilde{\omega})$.

Whenever there is a $G$--invariant compatible K\"ahler metric $\tilde{g}$ on $(\tilde{M},\tilde{\omega})$ then the whole K\"ahler structure descends on $M$ making the quotient map $$\pi : \mu^{-1}(0) \longrightarrow M $$ a Riemannian submersion. The resulting structure $(M,\omega, g)$ is called the \emph{K\"ahler reduction} of $(\tilde{M},\tilde{\omega},\tilde{g})$.

It would be interesting to see if the statonnary properties (being HSLAG, stable, rigid..) of a $G$--invariant Lagrangian lying in $\mu^{-1}(0) \subset \tilde{M}$ are preserved under this operation. We prove below it is the case for toric manifold.  

%

\begin{lemma}\label{lemKRHsLag} $\tilde{L}$ is a $G$--invariant Lagrangian of $(\tilde{M},\tilde{\omega})$ lying in $\mu^{-1}(c)$ if and only if $\pi(\tilde{L}) =L$ is a Lagrangian in $(M,\omega)$. Suppose that $\tilde{g}$ is a $G$--invariant compatible K\"ahler metric on $(\tilde{M},\tilde{\omega})$ such that there exists a positive constant $\kappa$ depending only on $(\tilde{M},\tilde{\omega},\tilde{g})$ and $G$ such that $\mbox{vol}(\pi^{-1}(L))= \kappa \,\mbox{vol}(L)$ for every Lagrangian $L$ in $M$. Then $L$ is HSLAG whenever $\tilde{L}$ is and is stable whenever $\tilde{L}$ is.  
\end{lemma}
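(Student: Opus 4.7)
\medskip
\noindent\textbf{Proof plan.} The first equivalence is a dimension count combined with the reduction formula $\pi^*\omega = \tilde\omega|_{TN}$. A $G$-invariant submanifold $\tilde L \subset N := \mu^{-1}(c)$ descends to $L = \pi(\tilde L) \subset M$ with $\dim L = \dim \tilde L - \dim G$, and the Lagrangian dimension $n = \dim_{\CC} M$ downstairs corresponds exactly to $n + \dim G = \tfrac{1}{2}\dim \tilde M$ upstairs. Isotropy transfers because $T\tilde L \subset TN$ and $\tilde\omega|_{TN} = \pi^*\omega$, so $\omega|_{L} = 0$ if and only if $\tilde\omega|_{\tilde L} = 0$. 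Conversely, given a Lagrangian $L \subset M$, the preimage $\tilde L := \pi^{-1}(L) \subset N$ is a $G$-invariant Lagrangian upstairs by the same identity.

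For the variational assertions, the plan is to set up a bijection between Hamiltonian deformations of $L$ in $M$ and $G$-invariant Hamiltonian deformations of $\tilde L$ in $\tilde M$. Any $h \in C^\infty(M)$ lifts to a $G$-invariant function $h\circ\pi$ on $N$; choosing any $G$-invariant smooth extension $\tilde h$ to $\tilde M$ (by $G$-averaging an arbitrary extension), the fact that $\tilde h$ is $G$-invariant means it Poisson-commutes with the components of $\mu$, so the Hamiltonian flow of $\tilde h$ preserves $N$ and descends on the quotient to the Hamiltonian flow of $h$. Conversely, every $G$-invariant Hamiltonian flow on $\tilde M$ preserves $N$ and descends to $M$. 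Under this correspondence $\tilde L_t = \pi^{-1}(L_t)$, and the volume hypothesis yields $\vol(\tilde L_t) = \kappa\,\vol(L_t)$, so the $t$-derivatives at $t=0$ agree up to the constant $\kappa$ at every order.

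From this the HSLAG transfer is immediate: if $\tilde L$ is critical under every Hamiltonian deformation it is in particular critical under $G$-invariant ones, so $L$ is critical under every Hamiltonian deformation. For stability, the second-variation operators $\Box$ on $L$ and $\tilde\Box$ on $\tilde L$ satisfy $\ip{\Box v, v}_L = \kappa^{-1}\ip{\tilde\Box \tilde v, \tilde v}_{\tilde L}$ with $\tilde v = v \circ \pi$, and $\cK_L$ is identified with the $G$-invariant subspace of $\tilde \cK_{\tilde L} = \ker \tilde\Box$. Positivity of $\tilde\Box$ on a complement of $\tilde\cK_{\tilde L}$ then restricts to positivity of $\Box$ on a complement of $\cK_L$, proving stability of $L$.

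The main obstacle I anticipate is verifying that $\cK_L$ really equals the $G$-invariant part of $\tilde\cK_{\tilde L}$. The inclusion of $\cK_L$ into the $G$-invariant part of $\tilde\cK_{\tilde L}$ is a direct consequence of the variational correspondence, but the reverse inclusion requires using that $G$ acts locally freely on $N$, so that $G$-invariant smooth functions on $\tilde L$ descend to smooth functions on $L = \tilde L/G$. Once this identification is in place, restricting a nonnegative self-adjoint quadratic form to a subspace invariant under the symmetry gives again a nonnegative form with the expected kernel, and the transfer of both HSLAG-ness and stability follows.
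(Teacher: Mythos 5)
Your proposal is correct and follows essentially the same route as the paper: you lift $f\in C^\infty(M)$ to a $G$-invariant extension $\tilde f$, verify that its Hamiltonian flow preserves $N=\mu^{-1}(c)$ via $\langle d\mu(\tilde X),a\rangle=d\tilde f(X_a)=0$, compare volumes through $\vol(\pi^{-1}(L))=\kappa\,\vol(L)$ to transfer criticality, and transfer stability through the quadratic-form identity, which is exactly the paper's relation $\kappa\,\Box(f|_L)=\Box(\pi^*(f|_L))$ together with the identification of $\ker\Box$ with the $G$-invariant part of the kernel upstairs. The only cosmetic difference is in the first claim, where the paper invokes that the kernel of $\tilde\omega$ restricted to $T_pN$ is the tangent space to the $G$-orbit, while you use a dimension count combined with injectivity of pullback along the submersion $\pi|_{\tilde L}$; both are standard reduction facts of equal strength.
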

\begin{proof} Put $N= \mu^{-1}(c)$.
The first statement follows the fact that for every $p\in N$, the kernel of $\omega_p$ in $T_pN$ (i.e those $v \in T_pN$ such that $\omega_p(v, w)=0$ for all $w \in T_pN)$) coincides with the tangent space of the orbit of $G$.

Given a function $f\in C^{\infty}(M)$, take any $\tilde{f} \in C^{\infty}(\tilde{M})$ which is a $G$--invariant extension of the function $\pi^*f \in C^{\infty}(N)^G$. Let $\tilde{X}$ be the Hamiltonian vector field associated to $\tilde{f}$. Observe that $\tilde{X}$ is tangent to $N$ since for each $a\in\g$ $$\langle d\mu(\tilde{X}), a\rangle = -\tilde{\omega}(X_a,\tilde{X}) = d\tilde{f} (X_a) =0.$$ 

Hence, for any Lagrangian $L \subset M$ and $\tilde{L}=\pi^{-1}(L)$ the variation $\tilde{\phi}_t(\tilde{L})$ induced by $t\tilde{X}$, stays in $N$ and $\tilde{\phi}_t(\tilde{L}) = \pi^{-1}(\phi_t(L))$. Thus, $\mbox{vol}(\tilde{\phi}_t(\tilde{L}))= \kappa\mbox{vol}(\phi_t(L))$. So that $L$ is HSLAG if $\tilde{L}$ is.    

Moreover, for any Lagrangian $L \subset M$, the $G$--invariant submanifold $\tilde{L}=\pi^{-1}(L)$ is Lagrangian in $(\tilde{M}, \tilde{\omega})$ via the composition of inclusions $\tilde{\iota}: \tilde{L} \hookrightarrow N \hookrightarrow\tilde{M}$ and $$\tilde{\iota}^*\tilde{\omega}(\tilde{X},\cdot) = \pi^*(\iota^*\omega(X_f, \cdot))$$ where $df=-\omega(X_f,\cdot)$. Hence, $\kappa \Box(f_{|_L}) = \Box(\pi^*(f_{|_L}))$. 
\end{proof}

\begin{cor}\label{coroRIGID} Assume, in addition to the hypothesis of Lemma~\ref{lemKRHsLag}, that $\tilde{L}$ is rigid then, for any function $f\in  C^{\infty}(L)$ such that $\Box f =0$, there exists a Killing potential $\tilde{f} \in C^{\infty}(\tilde{M})$ such that $\tilde{f}_{|_{\tilde{L}}}= \pi^*f$.
\end{cor}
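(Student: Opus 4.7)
The plan is to combine the identity $\kappa\,\Box(f) = \Box(\pi^*f)$ (where the two $\Box$ operators act on $L$ and $\tilde L$ respectively), which appears at the end of the proof of Lemma~\ref{lemKRHsLag}, with the rigidity hypothesis on $\tilde L$.

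First, start with $f\in C^\infty(L)$ satisfying $\Box f = 0$. By the above identity, the pullback $\pi^* f\in C^\infty(\tilde L)$ satisfies $\Box(\pi^* f) = \kappa\,\Box f = 0$; hence $\pi^* f$ lies in the kernel $\cK_{\tilde L}$ of the Jacobi operator of $\tilde L \subset \tilde M$.

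Second, invoke the rigidity of $\tilde L$. By Definition~\ref{dfn:rigidstable}, this means that the restriction map $\cK_{\tilde M}\to\cK_{\tilde L}$ from Killing potentials of $(\tilde M,\tilde\omega,\tilde g)$ to $\cK_{\tilde L}$ is surjective. Consequently, there exists a Killing potential $\tilde f_0\in C^\infty(\tilde M)$ with $\tilde f_0|_{\tilde L} = \pi^* f$, and setting $\tilde f = \tilde f_0$ already proves the corollary as stated.

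As a natural refinement that is likely useful downstream (e.g.\ for descending $\tilde f$ to a Killing potential on $M$ whose restriction to $L$ is $f$, thereby propagating rigidity from $\tilde L$ to $L$), one may arrange $\tilde f$ to be $G$-invariant by averaging. Since $\pi^* f$ is tautologically $G$-invariant, $G$ acts by Hamiltonian isometries (so $\cK_{\tilde M}$ is $G$-stable), and $G$ preserves $\tilde L$ setwise, the function $\tilde f := \int_G g^*\tilde f_0\,dg$ is a $G$-invariant Killing potential whose restriction to $\tilde L$ is still $\pi^* f$. There is no substantive obstacle here: the corollary is a formal consequence of the naturality relation recorded in Lemma~\ref{lemKRHsLag} combined with the definition of rigidity.
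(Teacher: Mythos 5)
Your proof is correct and is exactly the argument the paper intends: the corollary is a direct consequence of the identity $\kappa\,\pi^*(\Box f)=\Box(\pi^*f)$ established at the end of the proof of Lemma~\ref{lemKRHsLag} (showing $\pi^*f\in\cK_{\tilde L}$) combined with the surjectivity of $\cK_{\tilde M}\to\cK_{\tilde L}$ from Definition~\ref{dfn:rigidstable}, which is why the paper states it without a separate proof. Your optional $G$-averaging refinement is sound but not part of the statement.
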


\subsection{Case of $(\CP^n,\omega_{FS})$ and K\"ahler toric manifolds}\label{sectKReduction} 

Oh observed that each Lagrangian torus of $\CC^{n+1}$ of the form 
\begin{equation}\label{torus} T_{r_0,\dots, r_n} = \{ z\in  \CC^{n+1} \, | \, |z_i|= r_i\}\end{equation} is a critical point of the volume under Hamiltonian deformations (HSLAG). He proved also that it is rigid and stable by direct computations. One way to state his result is the following 

\begin{prop}[Oh, \cite{O93}] \label{propOh}
Let $\phi_t \in \mbox{Ham}(\CC^{n+1}, \omega_{FS} )$ be a path of Hamiltonian diffeomorphisms with $\pi_0=Id$ and $\tilde{X} = (\frac{d}{dt}\phi_t)_{t=0} \in \mathfrak{ham}(\CC^{n+1}, \omega_{FS})$. Let $\tilde{f}$ be a smooth Hamiltonian function for $\tilde{X}$. Then, $L=T_{r_0,\dots, r_n}$ is a HSLAG and $$\frac{d^2}{dt^2}\mbox{vol}(\phi_t(L)) \geq 0$$ Moreover, $\frac{d^2}{dt^2}\mbox{vol}(\phi_t(L)) =0 $ if and only if $\tilde{f}_{|_L}$ lies in the span of $$\{ \sin \theta_i, \, \cos \theta_i,\, \sin (\theta_i-\theta_j),\, \cos (\theta_i-\theta_j) \}_{i,j=0}^n.$$  
\end{prop}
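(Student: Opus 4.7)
The plan is to reduce the proposition to an explicit spectral computation of the second variation operator $\Box$ on the flat torus $L$. Since $L = T_{r_0,\ldots,r_n}$ is a regular orbit of the standard Hamiltonian $\TT^{n+1}$ action on $\CC^{n+1}$, Lemma~\ref{lemma:toric} immediately gives that $L$ is Hamiltonian stationary. Equivalently, a direct computation in the angular coordinates $\theta_0,\ldots,\theta_n$ produces the parallel Maslov form $\alpha_H = -\sum_i d\theta_i$, which is automatically coclosed for the flat induced metric on $L$.

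Next I would compute $\Box$ explicitly from formula~\eqref{eq:box}. Since $\CC^{n+1}$ is flat, the $\Ric^\perp$ term vanishes. The second fundamental form and mean curvature of $L$ are given by $B(\partial_{\theta_i},\partial_{\theta_j}) = -r_i \delta_{ij}\, \partial_{r_i}$ and $H = -\sum_i (1/r_i)\,\partial_{r_i}$, so that $JH = -\sum_i (1/r_i^2)\partial_{\theta_i}$ is a parallel tangent vector field on $L$. Substituting into~\eqref{eq:box}, $\Box$ becomes a fourth-order constant-coefficient operator in the $\theta_i$, diagonalized by the Fourier basis $e^{i k\cdot\theta}$ with $k \in \ZZ^{n+1}$. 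A direct calculation yields the eigenvalue
\begin{equation*}
Q(k) = \bigg(\sum_j \frac{k_j^2}{r_j^2}\bigg)^{\!2} - 2\sum_j \frac{k_j^2}{r_j^4} + \bigg(\sum_j \frac{k_j}{r_j^2}\bigg)^{\!2}.
\end{equation*}

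The crucial step is the algebraic rearrangement
\begin{equation*}
Q(k) = \sum_j \frac{k_j^2(k_j^2-1)}{r_j^4} + 2\sum_{i<j} \frac{k_i k_j(k_i k_j + 1)}{r_i^2 r_j^2},
\end{equation*}
in which both terms are manifestly non-negative on $\ZZ^{n+1}$: $k_j^2(k_j^2-1) \geq 0$ for integer $k_j$, with equality iff $k_j \in \{-1, 0, 1\}$, and $m(m+1) \geq 0$ for integer $m = k_i k_j$, with equality iff $m \in \{-1, 0\}$. A short case analysis then shows $Q(k) = 0$ precisely when $k \in \{0\} \cup \{\pm e_i\} \cup \{\pm(e_i - e_j) : i \neq j\}$, which are exactly the Fourier modes of the functions listed in the statement. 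By Lemma~\ref{lemma:sa}, $\Box$ is selfadjoint, so the second variation of volume along the Hamiltonian path $\phi_t$ equals $\ip{\Box(\tilde{f}|_L), \tilde{f}|_L}$ and is therefore non-negative, with equality iff $\tilde{f}|_L$ lies in the claimed span.

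The main obstacle is the bookkeeping in assembling the contributions of $B$, $H$, and $JH$ in~\eqref{eq:box}, tracking signs and weights carefully to land on the clean formula for $Q(k)$. Once this is in hand, the non-negativity of $Q$ and the identification of its zero set are elementary consequences of integer arithmetic.
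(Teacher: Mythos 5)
Your argument is correct, but there is no line-by-line comparison to make with the paper, because the paper gives no proof of Proposition~\ref{propOh}: it is imported from Oh \cite{O93}, with the authors only remarking that Oh established stationarity, rigidity and stability of the tori \eqref{torus} ``by direct computations''. What you have written is, in effect, a reconstruction of those computations, carried out consistently within the paper's own formalism: Lemma~\ref{lemma:toric} (or your parallel Maslov form $\alpha_H=-\sum_i d\theta_i$) gives the HSLAG property; formula \eqref{eq:box} with $\Ric=0$ reduces $\Box$ to a constant-coefficient operator on the flat torus, and I have checked your data $B(\partial_{\theta_i},\partial_{\theta_j})=-r_i\delta_{ij}\partial_{r_i}$, $JH=-\sum_i r_i^{-2}\partial_{\theta_i}$, the resulting Fourier symbol $Q(k)$, the rearrangement into $\sum_j k_j^2(k_j^2-1)r_j^{-4}+2\sum_{i<j}k_ik_j(k_ik_j+1)r_i^{-2}r_j^{-2}$, and the identification of the zero set $\{0,\pm e_i,\pm(e_i-e_j)\}$ with the stated span (the case $i=j$ accounting for constants); all are right, and the modes $\pm(e_i+e_j)$ are correctly excluded since there $k_ik_j(k_ik_j+1)=2>0$. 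Two points deserve an explicit line each: first, $\frac{d^2}{dt^2}\vol(\phi_t(L))\big|_{t=0}$ for an arbitrary Hamiltonian path depends a priori on the second-order jet of $\phi_t$, so you should state that the vanishing of the first variation (the HSLAG property you just proved) is what makes the second variation a well-defined quadratic form $\ip{\Box(\tilde f|_L),\tilde f|_L}$ depending on $\tilde X$ alone; second, the ``only if'' in the equality case needs the Plancherel expansion $\sum_k Q(k)|\widehat{f}(k)|^2$ with all $Q(k)\ge 0$, which is implicit in your diagonalization but worth writing out. Compared with the paper, your route buys a self-contained verification that also makes rigidity transparent---the null modes are visibly restrictions of the global Killing potentials $\Re z_i$, $\Im z_i$, $\Re(z_i\bar z_j)$, $\Im(z_i\bar z_j)$ of $\CC^{n+1}$---whereas the paper treats Oh's statement as a black box and leverages it, through the Kähler-reduction mechanism of Lemma~\ref{lemKRHsLag} and Lemma~\ref{vol_cst_MOMENT}, to obtain the general toric case in Proposition~\ref{propGMetrigid}.
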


\begin{rmks}
The hypothesis of Oh's result can be replaced by ``given a function $f \in C^{\infty}(L)$, or equivalently an exact $1$--form $\alpha_V =\iota^*\omega(V,\cdot)\in \Omega^1(L)$,...''.
\end{rmks}

By Delzant--Lerman--Tolman theory, see \cite{Delzant88, LermanTolman97}, any compact toric symplectic orbifold $(M,\omega,T)$ is the symplectic reduction of $(\CC^d,\omega_{std})$ with respect to a subtorus $G \subset \bT^{d}$. Via this construction the manifold inherits of a K\"ahler metric $\omega_G$, called the Guillemin metric~\cite{Guillemin94}. 

We recall the Delzant construction which is determined by the rational labelled polytope $(\pol, \nu,\Lambda)$ associated to $(M,\omega,T)$. Denote the Lie algebra $\kt= \mbox{Lie } T$ and the momentum map $x : M \ra \pol\subset \kt^*$. We use the convenient convention that $\nu=\{\nu_1,\dots, \nu_d\}$ is a set of vectors in $\kt$ so that if $F_1,\dots, F_d$ are the codimension $1$ faces ({\it the facets}) then $\nu_k$ is normal to $F_k$ and inward to $\pol$. The lattice $\Lambda$ defines the torus as $T=\kt/\Lambda$. Being {\it rational}\footnote{To recover the original convention introduced by Lerman and Tolman in the rational case, take $m_k\in \ZZ$ such that $\frac{1}{m_k}\nu_k$ is primitive in $\Lambda$ so $(\pol, m_1,\dots m_d,\Lambda)$ is a rational labelled polytope.} means 
$\nu\subset \Lambda$. From the rational pair $(\pol, \nu)$, the torus $G$ is determined by its Lie algebra $\g= \ker \q$ where $\q : \RR^d \ra \kt$ is $$\q(x):=\sum_{i=1}^dx_i\nu_i.$$ Hence, the compactness of $P$ implies that $\q$ is surjective and that $T= \RR^d/G$. 
  
The defining affine functions of $(\pol, \nu)$ are the $\ell_1,\dots, \ell_d\in \mbox{Aff}(\kt^*, \RR)$ such that $\pol= \{ x\in\kt^*\,|\,\ell_k(x)\geq0\}$ and $d\l_k=\nu_k$. Denote the inclusion $\iota : \g \hookrightarrow \RR^d$ and $\mu_o : \CC^d \ra (\RR^d)^*$ the (homogenous of degree $2$) momentum map of the action of $ \bT^{d}$ on $\CC^d$ so that $\iota^*\mu_o$ is a momentum map for the action of $G$ on $\CC^d$. One side of the Delzant--Lerman--Tolman correspondence states that $(M,\omega,T)$ is $T$--equivariently symplectomorphic to the symplectic reduction of $(\CC^d,\omega_{std})$ at the level $c=\iota^*(\ell_1(0),\dots,\ell_d(0)) \in \g^*$. In the following, we identify $(M,\omega,T)$ with this reduction. \\  
Note that since $\mu_o (z) = \frac{1}{2}(|z_0|^2,\dots,|z_n|^2)$ the defining equations of $N=(\iota^*\mu_o)^{-1}(c)$ involve only the square radii $r_i^2=|z_i|^2$ and, thus, $N$ is foliated by tori (of various dimension between $\dim G$ and $d$) of the form \eqref{torus}. Moreover, for each  $x\in \mathring{\pol}$ the interior of $\pol$, $L_x =\mu^{-1}(x) \subset M$ is a Lagrangian torus such that $\pi^{-1}(L_x) =\tilde{L}_x$ is a $d$--dimensional torus of the form \eqref{torus}. Finally, observe that if $\tilde{f}$ is a Killing potential of $(\CC^d,\omega_{std})$ which is $G$--invariant on {\it  some} $d$--dimensional torus of the form \eqref{torus} then $\tilde{f}$ is $G$--invariant on {\it any} $d$--dimensional torus of the form \eqref{torus}.

\begin{lemma}\label{vol_cst_MOMENT}  Let $G$ be a subtorus of $ \bT^{d}$ and $\iota : G\hookrightarrow \bT^{d}$ the inclusion. The volume, with respect to the standard flat metric of $\CC^d$, of the orbits of $G$ is constant on the regular level set of the momentum map $\iota^*\mu_o : \CC^d \rightarrow \mathfrak{g}^*$.     
\end{lemma}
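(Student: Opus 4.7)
The plan is to express the orbit volume explicitly in terms of the squared radii $s_j := |z_j|^2$ and reduce the claim to a linear--algebraic identity. Working in polar coordinates $z_j = r_j e^{i\theta_j}$ on $\CC^d$, the flat metric reads $\sum_j (dr_j^2 + r_j^2 d\theta_j^2)$, and after choosing a basis $e_1,\ldots,e_k$ of $\g$ with $\iota(e_a) = (a_{a1},\ldots,a_{ad}) \in \Lie(\bT^d)$, the fundamental vector fields of the $G$-action become $X_a = \sum_j a_{aj}\,\partial_{\theta_j}$. The induced metric on the $G$-orbit through $z$ then has Gram matrix $M_{ab}(z) = \sum_j a_{aj} a_{bj}\, s_j$, so the Riemannian volume of that orbit equals $\sqrt{\det M(z)}$ times the covolume of the kernel lattice $\Lambda_G \subset \g$, and in particular depends only on $s$.

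The level set $(\iota^*\mu_o)^{-1}(c)$ is, in the $s$-coordinates, the affine slice $\{s\,|\,As = 2c\}$ (intersected with $s_j \geq 0$), where $A = (a_{aj})$ is the $k \times d$ matrix encoding $\iota$. The lemma therefore amounts to the statement that the polynomial $\det M(s) = \det(A\,\mathrm{diag}(s)\,A^T)$ is constant on this affine slice. By the Cauchy--Binet formula, $\det M(s) = \sum_{|S|=k} (\det A_S)^2 \prod_{j\in S} s_j$, and the problem reduces to showing that this polynomial factors through the linear map $s \mapsto As$.

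To obtain this factorisation, the natural tool is the Delzant short exact sequence $0 \to \g \xrightarrow{\iota} \RR^d \xrightarrow{\q} \kt \to 0$ recalled in Section~\ref{sectKReduction}. It yields Pl\"ucker--type identities equating the $k \times k$ minors $\det A_S$ (up to sign) to the complementary $(d-k) \times (d-k)$ minors $\det \q_{S^c}$, once compatible bases and lattices are fixed. Substituting these into the Cauchy--Binet expansion and using the definition $c = \iota^*(\ell_1(0),\ldots,\ell_d(0))$ of the reduction level, one regroups the terms into linear combinations of the components $(As)_a = 2c_a$, leaving only a quantity depending on $c$ alone.

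The main obstacle is precisely this final algebraic rearrangement: it is the one step which uses genuinely the combinatorial data $(\pol,\nu,\Lambda)$ of the labelled polytope rather than merely the abstract inclusion $G \subset \bT^d$, and must be carried out with enough care to handle the strata where some $s_j$ vanish. Once it is established, the orbit volume on the regular level set becomes a function of $c$ alone, which is the content of the lemma.
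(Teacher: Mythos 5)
Your first two steps are correct and coincide with the paper's own reduction. The paper likewise writes the flat metric as $\sum_i \frac{d\mu_i^2}{2\mu_i}+2\mu_i\,d\theta_i^2$ and identifies the induced metric on a full-dimensional orbit with $\iota^*h$, where $h=\sum_i 2\mu_i\,d\theta_i^2$; this is exactly your Gram matrix $M(s)=A\,\mathrm{diag}(s)\,A^{T}$, linear in $s=(|z_1|^2,\dots,|z_d|^2)$, and your Cauchy--Binet expansion of $\det M$ is right. But your proposal stops precisely where all the content lies: the factorisation of $\det M(s)$ through $s\mapsto As$ is announced in your third step and then explicitly deferred as ``the main obstacle,'' so what you have is a reduction to an unproved claim, not a proof. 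It is worth noting that the paper handles this same step with a single word --- ``Clearly $\iota^*h$ only depends on $\iota^*(\mu_o(z))$'' --- so you have correctly isolated the crux that the paper elides; you just have not bridged it.

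The deeper problem is that no Pl\"ucker/Gale-duality regrouping can bridge it at the stated level of generality, because the claim is false for a general subtorus. The linear map $\mu\mapsto Q_\mu$, with $Q_\mu(u,v)=\sum_j\mu_j u_j v_j$ on $\g$, factors through the restriction $\mu\mapsto\iota^*\mu$ if and only if $\g$ is closed under the coordinatewise product, i.e.\ $u\odot v\in\g$ for all $u,v\in\g$ (if $\mu$ annihilates $\g$ then $\langle\mu,u\odot v\rangle=Q_\mu(u,v)$ must vanish). Minimal counterexample: $d=2$, $\g=\RR(1,2)$; on the regular slice $s_1+2s_2=2c$ the orbit length is $2\pi\sqrt{s_1+4s_2}=2\pi\sqrt{2c+2s_2}$, which is non-constant even though every point with $s_1,s_2>0$ is regular with trivial stabiliser. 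Restricting to Delzant kernels at the Delzant level does not help either (note, moreover, that the lemma assumes only a subtorus and an arbitrary regular level, so the labelled-polytope data your third step invokes is not available from the hypotheses): for the first Hirzebruch surface one has $\g=\mathrm{span}\{(1,0,1,1),(0,1,0,1)\}$, and Cauchy--Binet gives $\det M=(s_1+s_3+s_4)(s_2+s_4)-s_4^2=4c_1c_2-s_4^2$ on the slice, which varies along the free-orbit path $s=(2c_1-t,\,2c_2-t,\,0,\,t)$. The closure condition does hold when $\g$ is spanned by indicator vectors of disjoint blocks of coordinates --- in particular for the diagonal circle reducing $\CC^{n+1}$ to $\CP^n$, and for products of such --- and in that case the conclusion is immediate with no minor identities at all, since each Gram entry $\langle s,u_a\odot u_b\rangle$ is then itself a function of $As$. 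So the honest status is: your outline, like the paper's one-line proof, is valid exactly under this extra Hadamard-closure hypothesis, and the ``careful algebraic rearrangement'' you defer to would fail for, e.g., Hirzebruch surfaces or unequal circle weights.
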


\begin{proof}
 Let $g$ be the standard metric on $\CC^d$. We have $$g=\sum_{i=1}^d \frac{d\mu_i^2}{2\mu_i} +  2\mu_i d\theta_i^2$$ where $\mu_i=\frac{1}{2}|z_i|^2$ and $\theta_i$ is the angle coordinates. Hence, for $z\in \CC^d$ such that $G\cdot z$ is of full dimension, the action identifies $G\cdot z$ with $G$ and the metric induced on the orbit is $\iota^*h$ where $h= 2\mu_i d\theta_i^2$. Clearly $\iota^*h$ only depends on $\iota^*(\mu_o (z))$.
\end{proof}

This last lemma implies that the present situation fulfill the hypothesis of Lemma~\ref{lemKRHsLag} which we combine with  Corollary~\ref{coroRIGID} and Proposition~\ref{propOh}, to get the following Proposition.

\begin{prop}\label{propGMetrigid} Any $L_x\subset M$ Lagrangian torus obtained as level set of the momentum map $\mu: M \ra \kt^*$ is HSLAG, stable and rigid for the Guillemin metric $\omega_G$. 
\end{prop}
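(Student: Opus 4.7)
The plan is to assemble the proposition from the three preceding results, with the $G$-invariance of Killing potentials as the bridge between $\CC^d$ and $M$. In our setup, $\tilde{M}=\CC^d$, the group $G\subset \bT^d$ acts linearly, $\mu^{-1}(x)=L_x$, and $\pi^{-1}(L_x)=\tilde{L}_x$ is a product torus $T_{r_0,\dots,r_d}$ of the form \eqref{torus}. By Proposition~\ref{propOh}, $\tilde{L}_x$ is HSLAG in $(\CC^d,\omega_{std})$ and stable, and its kernel $\ker\Box|_{\tilde L_x}$ is exactly the span of $\{\sin\theta_i,\cos\theta_i,\sin(\theta_i-\theta_j),\cos(\theta_i-\theta_j)\}$. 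Each of these functions is the restriction to $\tilde L_x$ of a Killing potential of $(\CC^d,\omega_{std})$ (coming from translations and unitary rotations), so $\tilde L_x$ is itself rigid.

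First I would check the volume hypothesis of Lemma~\ref{lemKRHsLag}: by Lemma~\ref{vol_cst_MOMENT}, the volume of $G$-orbits is constant on the level set $N=(\iota^*\mu_o)^{-1}(c)$, so $\mathrm{vol}(\pi^{-1}(L))=\kappa\,\mathrm{vol}(L)$ for every Lagrangian $L\subset M$, with $\kappa$ the common $G$-orbit volume at level $c$. Applying Lemma~\ref{lemKRHsLag} directly then transfers HSLAG and stability from $\tilde L_x$ down to $L_x\subset (M,\omega_G)$.

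For rigidity, I would take $f\in C^{\infty}(L_x)$ with $\Box f=0$. Since $\tilde L_x$ is rigid in $\CC^d$, Corollary~\ref{coroRIGID} produces a Killing potential $\tilde f\in C^{\infty}(\CC^d)$ for $\omega_{std}$ with $\tilde f|_{\tilde L_x}=\pi^*f$. Because $\pi^*f$ is $G$-invariant on $\tilde L_x$, the remark immediately preceding the proposition ensures that $\tilde f$ is $G$-invariant on every $d$-dimensional torus of the form \eqref{torus}; these tori sweep out the open dense subset $\{z\in\CC^d\mid z_0\cdots z_d\neq 0\}$, so by continuity $\tilde f$ is $G$-invariant on all of $\CC^d$. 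Consequently $\tilde f$ descends through the symplectic reduction to a Killing potential $F\in\cK_M$ of the Guillemin metric with $F|_{L_x}=f$, proving that the restriction map $\cK_M\to\cK_{L_x}$ is surjective.

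The main obstacle I expect is the $G$-invariance step that converts the Killing potential $\tilde f$ on $\CC^d$ into one that descends to $M$. Rigidity is a statement about \emph{ambient} infinitesimal symmetries, so Corollary~\ref{coroRIGID} alone is not enough: one must check that the auxiliary extension can be taken invariant under the reducing torus $G$. The observation about $G$-invariance propagating from one torus \eqref{torus} to all of them, combined with the fact that Killing potentials of $(\CC^d,\omega_{std})$ form a finite-dimensional space of quadratic-plus-linear expressions in $z,\bar z$ (so that vanishing of the $G$-non-invariant components on an open set forces their vanishing globally), resolves this point. Alternatively, one could average $\tilde f$ over $G$: since $G$ preserves the Killing potentials and $\pi^*f$ is already $G$-invariant, the average is still a Killing potential with the same restriction, giving the desired invariant lift.
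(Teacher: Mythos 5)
Your proof is correct and follows essentially the same route as the paper, which obtains the proposition by combining Lemma~\ref{vol_cst_MOMENT} (verifying the volume hypothesis), Lemma~\ref{lemKRHsLag} (transfer of HSLAG and stability), and Corollary~\ref{coroRIGID} together with Proposition~\ref{propOh} (rigidity). The $G$-invariance step you flag as the main obstacle is exactly the observation the paper records just before Lemma~\ref{vol_cst_MOMENT} (invariance of a Killing potential on one $d$-dimensional torus of the form \eqref{torus} propagates to all of them), and your density/averaging arguments are correct ways of making that descent to a Killing potential on $(M,\omega_G)$ explicit.
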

 \begin{rmk}
  Lemma~\ref{vol_cst_MOMENT} applies more generally to {\it toric rigid metrics} in the sense of \cite{H2FII}. Therefore there is a bigger class of metric on which Proposition~\ref{propGMetrigid} extends.
 \end{rmk}

The complex projective space with its Fubini-Study metric $(\CP^n,\omega_{FS})$ is a case of that last Proposition. Indeed, one way to see $\CP^n$ is as a K\"ahler reduction of $\CC^{n+1}$ with respect to the diagonal Hamiltonian action of $S^1$. This coicides with the Hodge fibration $\pi : S^{2n+1} \ra \CP^n$ and fits with the toric structure of $(\CP^n,\omega_{FS})$ obtained by quotient of the isometric toric action of $\bT^{n+1}$ on $(\CC^{n+1},\omega_{std})$. We denote this quotient $T= \bT^{n+1}/S^1$, $\kt =\mbox{Lie } T$ and the momentum map $\mu : \CP^n \ra \kt^*$. For any $L_x\subset \CP^n$ Lagrangian torus obtained as level set of the moment map, $\pi^{-1}(L_x)$ is a torus of the form~\eqref{torus}. It may be more convenient to take the (more natural) momentum map $$[Z_0:\dots : Z_n] \mapsto \left(\frac{|Z_0|^2}{\sum_{i=0}^n |Z_i|^2},\cdots, \frac{|Z_n|^2}{\sum_{i=0}^n |Z_i|^2}\right)\in \RR^{n+1}$$ so that the $r_i$'s in~\eqref{torus} are just the $|Z_i|$'s.\\
The $S^1$ invariant functions on $T_{r_0,\dots,r_n}$ in the kernel of $\Box$ are $\{\sin(\theta_i-\theta_j), \cos(\theta_i-\theta_j)\}_{i,j =0}^n$ so Proposition~\ref{propGMetrigid} holds in this case and reads as follow. 

\begin{prop}[\cite{Ono2007}] Any $L_x\subset \CP^n$ Lagrangian torus obtained as level set of the moment map $\mu: \CP^n \ra \kt^*$ is HSLAG, stable and rigid for the Fubini-Study metric. 
\end{prop}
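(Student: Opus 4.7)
The proposition is essentially the specialization of the preceding Proposition~\ref{propGMetrigid} to the toric Kähler manifold $(\CP^n, \omega_{FS})$, together with an identification of the Fubini--Study metric with the Guillemin metric of $\CP^n$. The plan is therefore to make this identification explicit and then invoke Proposition~\ref{propGMetrigid}; all the technical work has already been done upstream.

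First, I would recall the classical fact that $(\CP^n, \omega_{FS})$ is the Kähler reduction of $(\CC^{n+1}, \omega_{\mathrm{std}})$ at level $1/2$ of the momentum map $\tfrac12(|z_0|^2+\cdots+|z_n|^2)$ of the diagonal $S^1$-action; concretely this is the Hopf fibration $\pi:S^{2n+1}\to \CP^n$. The residual torus $T=\bT^{n+1}/S^1$ acts on $\CP^n$ with momentum map $\mu$ obtained by restricting the standard $\bT^{n+1}$-momentum map on $\CC^{n+1}$ to $S^{2n+1}$ and quotienting. This is precisely the Delzant--Lerman--Tolman presentation of the standard toric $(\CP^n,T)$, whose associated Guillemin metric $\omega_G$ coincides with $\omega_{FS}$. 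In particular the preimage $\pi^{-1}(L_x)$ of any regular fiber $L_x=\mu^{-1}(x)$ is a torus of the form $T_{r_0,\dots,r_n}$ in \eqref{torus}.

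Second, I would check the hypothesis of Lemma~\ref{lemKRHsLag}: by Lemma~\ref{vol_cst_MOMENT} (applied with $G=S^1$), the $S^1$-orbits on the zero level set of the moment map all have the same volume, so for every Lagrangian $L\subset \CP^n$ one has $\mathrm{vol}(\pi^{-1}(L))=\kappa\,\mathrm{vol}(L)$ for a constant $\kappa$. By Oh's Proposition~\ref{propOh} the torus $\pi^{-1}(L_x)=T_{r_0,\dots,r_n}$ is HSLAG and stable in $\CC^{n+1}$, so Lemma~\ref{lemKRHsLag} immediately yields that $L_x$ is HSLAG and stable for the Fubini--Study metric.

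Third, for rigidity I would invoke Corollary~\ref{coroRIGID}, combined with the explicit description of $\ker\Box$ on $\pi^{-1}(L_x)$ given by Oh. Any $f\in\ker\Box$ on $L_x$ pulls back to an $S^1$-invariant element of $\ker\Box$ on $T_{r_0,\dots,r_n}$; among the basis $\{\sin\theta_i,\cos\theta_i,\sin(\theta_i-\theta_j),\cos(\theta_i-\theta_j)\}$ furnished by Proposition~\ref{propOh}, the $S^1$-invariant functions are exactly $\sin(\theta_i-\theta_j)$ and $\cos(\theta_i-\theta_j)$. These are the restrictions of the globally defined $S^1$-invariant Killing potentials $\mathrm{Im}(z_i\bar z_j)$ and $\mathrm{Re}(z_i\bar z_j)$ on $(\CC^{n+1},\omega_{\mathrm{std}})$, which descend to Killing potentials on $(\CP^n,\omega_{FS})$ generating the $\mathrm{PSU}(n+1)$-action; hence the restriction map $\cK_{\CP^n}\to\cK_{L_x}$ is surjective, proving rigidity.

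The only point requiring any genuine verification is the final one---that the $S^1$-invariant elements of $\ker\Box$ upstairs indeed descend to Killing potentials on $(\CP^n,\omega_{FS})$---but this is immediate from the explicit identification of $\mathrm{Im}(z_i\bar z_j)$ and $\mathrm{Re}(z_i\bar z_j)$ as moment maps of the $\mathrm{SU}(n+1)$-action. There is no real obstacle; the statement is almost a corollary of Proposition~\ref{propGMetrigid}, recorded separately for emphasis because $\CP^n$ is the motivating example and because it recovers Ono's original result.
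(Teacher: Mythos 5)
Your proposal is correct and takes essentially the same route as the paper: you specialize Proposition~\ref{propGMetrigid} (i.e.\ Lemma~\ref{vol_cst_MOMENT} feeding the hypothesis of Lemma~\ref{lemKRHsLag}, combined with Corollary~\ref{coroRIGID} and Oh's Proposition~\ref{propOh}) to the presentation of $(\CP^n,\omega_{FS})$ as the $S^1$-Kähler reduction of $(\CC^{n+1},\omega_{std})$ via the Hopf fibration, with the Guillemin metric identified with the Fubini--Study metric. Your closing identification of the $S^1$-invariant kernel elements $\sin(\theta_i-\theta_j)$, $\cos(\theta_i-\theta_j)$ with restrictions of the descending Killing potentials $\mathrm{Im}(z_i\bar z_j)$, $\mathrm{Re}(z_i\bar z_j)$ is precisely the observation the paper records immediately before stating the proposition.
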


\section{Deformation theory}
\label{sec:defo}
In this section, we are assuming that $\ell :L\to M$ is a Hamiltonian
stationary Lagrangian
embedding, where $(M,\omega,J_0)$ is a K\"ahler manifold. For each
almost complex structure $ J\in \AC_\omega$ sufficiently close
to $J_0$, we would like to find a Hamiltonian deformation $\tilde\ell :
L\to M$ of the Lagrangian embedding
$\ell : L\to M$ which is Hamiltonian stationary with respect to
$(M,\omega, J)$. It turns out that this problem can not be solved directly
by the implicit function theorem since the equations are generally overdetermined. 

\subsection{Diffeomorphisms of the source space}
The group of diffeomorphisms  $\Diff(L)$ acts on  Lagrangian
embeddings $\ell: L\to M$ by composition on the right. Such an
infinite dimensional group action gives a huge group 
of symmetries which preserves the equation for HSLAG embeddings. This indeterminacy
of the equations is
easily removed by seeking Hamiltonian deformations of the image
$$\cL=\ell(L)$$
as a
Lagrangian submanifold of $M$ instead. This boils down to consider the space of Lagrangian embeddings  upto reparametrizations.

\subsection{Group of isometries}
From now on, we are assuming that  $\ell:L\to M$ is a HSLAG embedding with respect to
$(M,\omega,J_0)$ and that it is rigid.

The group $G$ of Hamiltonian isometries of $(M,\omega, J_0)$ has a
corresponding space of Hamiltonian potentials  $\cK_M\subset
C^\infty(M)$. Their restriction to $L$ is  $\ell^*(\cK_M)\subset
C^\infty(L)$ and agrees with $\cK_L=\ker \Box_{\ell,J_0}$ by the
rigidity assumption.
An issue when trying to apply directly the implicit function
theorem is that the linearization of the HSLAG equations given by the
operator $\Box_{\ell,J_0}$ is generally neither injective nor surjective.

\subsection{Lagrangian neighborhood theorem}
A sufficiently small tubular
neighborhood $\cV$ of $\cL=\ell(L)$ in $M$ is symplectomorphic to a neigborhood $\cU$ of the zero
section $\ell_0:L\to T^*L$ endowed with its canonical symplectic
form. In addition $\ell$ is identified to $\ell_0$ via the
symplectomorphism. 
Small Lagrangian  deformations of $\cL$ are given by graphs of
sections $\alpha $ of $T^*L\to L$, where $\alpha$ is a sufficiently
small closed $1$-form on $L$. 
Furthermore, Hamiltonian deformations are given by exact
$1$-forms. 
Thus, every  smooth function $f$ on $L$, defines a
Lagrangian submanifold of $T^*L$ which is the graph of $df:L\to T^*L$
and this graph is a Hamiltonian deformation of the zero
section $\ell_0:L\to T^*L$. 
If $f$ is sufficiently small (in $C^1$-norm), using the
symplectomorphism between the tubular neighborhoods $\cU$ and $\cV$,
each section $df$ defines a Lagrangian embedding $\ell_f:L\to M$ which
is a Hamiltonian deformation of $\ell : L\to M$.

\subsection{Vector bundles}
The image of the map
$\ell^*:\cK_M\to
C^\infty(L)$
is $\cK_L=\ker\Box_{\ell,J_0}$ by rigidity, but it may not be injective. After passing to a subspace
$\cK_M^o\subset \cK_M$, we obtain an isomorphism
$$\ell^*:\cK^o_M\to \cK_L.$$

For any embedding $\tilde \ell:L\to M$
sufficiently close to $\ell:L\to M$, the map $\tilde \ell^*:\cK^o_M\to
C^\infty(L)$ remains injective. Using this observation, we are going
to introduce vector bundles of finite rank over the space of functions.

However, the space of smooth functions $C^\infty (L)$ is not suited to  apply
the implicit function theorem. Instead, we shall work with H\"older
spaces $C^{k,\eta}(L)$, where $0<\eta<1$ is the Hölder parameter and
$k$ is the number of derivatives accounted for.  For any function $f\in C^{4,\eta}(L)$
sufficiently small, the map $\ell_f^*:\cK_M^o\to C^{3,\eta}(L)$
remains injective. Its image is a finite dimensional vector space denoted
$$\fK_f\subset C^{3,\eta}(L).$$
 The spaces $\fK_f$ are the fibers of a
smooth vector bundle $\fK$ over a neighborhood of the origin in $C^{4,\eta}(L)$.

Let $\cH'$ be the orthogonal complement of $\cK_L=\fK_0$ in
$C^{3,\eta}(L)$, where the $L^2$ inner product  is induced by $\ell$
and $J_0$. For any $f\in  C^{4,\eta}(L)$ sufficiently small, we have a
splitting of vector bundles
$$
C^{3,\eta}(L)=\fK_f\oplus \cH',
$$
and the projection on the first factor parallel to $\cH'$ is denoted
$$
\pi_f :C^{3,\eta}(L)\to \fK_f.
$$
Similarly, we will need to consider the $L^2$-orthogonal complement
$\cH$ of $\cK_L$ in $C^{4,\eta}(L)$ which gives the splitting
\begin{equation}
  \label{eq:splith}
  C^{4,\eta}(L)=\cK_L\oplus \cH.
\end{equation}

\subsection{Implicit function theorem}
We introduce the map 
\begin{equation}
\label{eq:Psi}
\Psi:C^{4,\eta} (L)\times C^{4,\eta}(L) \times \AC^{2,\eta}_\omega\to C^{0,\eta}(L)  
\end{equation}
defined as follows: let $(f,k,J)$ be an element of $C^{4,\eta}(L)\times C^{4,\eta}(L)\times \AC^{2,\eta}_\omega$. The function 
$f $  admits a 
decomposition $f=f_L +  h$ where $f_L\in \cK_L$ and $h\in \cH$
according to the splitting \eqref{eq:splith}.
For $f$ and $k$ sufficiently small, we may use the Lagrangian
embedding $\ell_{k+h}:L\to M$.  
We define 
$$
\Psi(f,k, J) = d^*\alpha_H+ \pi_{h+k}(f) 
$$
where $d^*\alpha_H$ is computed with respect to the Lagrangian
embedding $\ell_{h+k}:L\to M$ and the almost complex structure $J$.

The differential of $\Psi$ at $(0,0,J_0)$ is given by
$$
\left . \frac{\del \Psi}{\del f}\right |_{(0,0,J_0)} \!\!\!\!\!\!\cdot
\dot f =   \dot f_L + \Box_{\ell,J_0} \dot h
$$
where we used the  decomposition $\dot f =\dot f_L+ \dot h \in \cK_L \oplus \cH$.
This operator is clearly an isomorphism.

By the implicit function function theorem, we deduce the following
proposition
\begin{prop}
  \label{prop:hslag}
There are open
neighborhoods $U, V$ of  $0$ in $C^{4,\eta}(L)$ and a $G$-invariant
open neigborhood $W$ of the almost complex structure $J_0$ in
$\AC_\omega^{2,\eta}$
  together
with a smooth map 
$$
\phi:V\times W\to U
$$
such that
$$
\Psi(\phi(k,J),k,J)=0
$$
for all $(k,J)\in V\times W$. Furthermore  $\phi(k,J)$ is the only solution $f\in U$ of the equation $\Psi(f,k,J)=0$ where $(k,J)\in V\times W$.  
\end{prop}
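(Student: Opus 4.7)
The plan is to apply the Banach space implicit function theorem to the map $\Psi$ defined in \eqref{eq:Psi}, viewed as a map between the Hölder spaces $C^{4,\eta}(L)\times C^{4,\eta}(L)\times \AC_\omega^{2,\eta} \to C^{0,\eta}(L)$. First I would check that $\Psi$ is genuinely smooth in the Banach space sense. The embedding $\ell_{h+k}:L\to M$ varies smoothly with $h+k\in C^{4,\eta}(L)$ by the Lagrangian neighborhood identification; the second fundamental form and hence the Maslov form $\alpha_H$ depend smoothly on $\ell_{h+k}$ and on $J\in\AC_\omega^{2,\eta}$, losing exactly the expected number of derivatives so that $d^*\alpha_H\in C^{0,\eta}(L)$; and the projection $\pi_{h+k}$ onto the finite-dimensional subspace $\fK_{h+k}$ is smooth in $h+k$ because the injectivity of $\ell_{h+k}^*:\cK_M^o\to C^{3,\eta}(L)$ for small $h+k$ allows one to write $\pi_{h+k}$ via a small perturbation of $\pi_0$ by an explicitly varying finite rank projector.

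The second step is to justify the asserted linearization. Since $\ell:L\to M$ is HSLAG with respect to $J_0$, we have $d^*\alpha_H=0$ at $(f,k,J)=(0,0,J_0)$, and by the definition of $\Box_{\ell,J_0}$ as the second variation operator, $\partial_f (d^*\alpha_H)|_{(0,0,J_0)}\cdot\dot f = \Box_{\ell,J_0}\dot h$ (the contribution from the $\dot f_L\in\cK_L$ part vanishes because the restriction of a Killing potential is in $\ker\Box_{\ell,J_0}$, and because deformations along Hamiltonian isometries do not change the induced metric). On the other hand $\pi_0(\dot f)=\dot f_L$ by definition of $\cH$. Together this gives the announced formula.

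The crucial analytic point is that this linearization is an isomorphism from $C^{4,\eta}(L)\to C^{0,\eta}(L)$. Relative to the splittings $C^{4,\eta}=\cK_L\oplus\cH$ and $C^{0,\eta}=\cK_L\oplus\cH''$ (with $\cH''$ the $L^2$-complement of $\cK_L$ in $C^{0,\eta}$), the map is block diagonal: the identity on $\cK_L$ and $\Box_{\ell,J_0}:\cH\to\cH''$. By Lemma \ref{lemma:sa}, $\Box_{\ell,J_0}$ is a selfadjoint elliptic operator of order four whose kernel is exactly $\cK_L$; standard Fredholm theory on Hölder spaces then yields that it induces a Banach space isomorphism between the $L^2$-orthogonal complements of $\cK_L$ at the appropriate regularity levels. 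This is the step that relies most essentially on the rigidity hypothesis (through the identification $\cK_L=\ell^*\cK_M$) and on the ellipticity/selfadjointness established earlier.

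Granting the isomorphism, the Banach space implicit function theorem produces open neighborhoods of $0$ in $C^{4,\eta}(L)$ and of $J_0$ in $\AC_\omega^{2,\eta}$ together with a unique smooth solution map $\phi$. Finally, to obtain a $G$-invariant $W$, I would intersect the $W$ produced by the implicit function theorem with all of its $G$-translates; since $G$ is compact and acts continuously on $\AC_\omega^{2,\eta}$, this intersection remains open and contains $J_0$, and the uniqueness clause of the implicit function theorem is unaffected by this shrinking. I expect the main obstacle to be the bookkeeping around the Hölder regularity levels in the Fredholm/elliptic step, in particular verifying that the inverse of $\Box_{\ell,J_0}|_{\cH\to\cH''}$ lands in $C^{4,\eta}$ with the correct continuity; everything else is essentially formal once the functional-analytic framework is fixed.
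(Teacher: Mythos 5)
Your proposal is correct and follows essentially the same route as the paper: the paper's own proof consists precisely of computing the linearization $\dot f\mapsto \dot f_L+\Box_{\ell,J_0}\dot h$ relative to the splitting $C^{4,\eta}(L)=\cK_L\oplus\cH$, asserting it is an isomorphism, and invoking the implicit function theorem. You simply make explicit the details the paper leaves implicit --- the smoothness of $\Psi$ and of the projectors $\pi_{h+k}$, the Fredholm/Schauder argument (via Lemma~\ref{lemma:sa} and rigidity, which ensures $\fK_0=\cK_L$) showing the linearization is a Banach isomorphism, and the construction of a $G$-invariant $W$ by intersecting translates over the compact group $G$ --- all of which are sound.
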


By definition, a solution of the equation $\Psi(f,k,J)=0$ provides a
Lagrangian embedding $\ell_{h+k}:L\to M$ satisfying the equation
\begin{equation} \label{eqOBSTmaslov}
d^*\alpha_H\in \fK_{h+k}. 
\end{equation}

Thus,  our problem of finding a HSLAG embedding is solved
up to a finite dimensional obstruction. The solution of this type are
called \emph{relatively HSLAG} embeddings.
\begin{dfn}
  A Lagrangian embedding $\ell:L\to M$ into an almost K\"ahler
  manifold $(M,\omega, J)$ such that $d^*\alpha_H$ is the restriction
  of a Hamiltonian potential in $\cK_M$ is called \emph{a relatively HSLAG
  embedding}.
  \end{dfn}

More information on the regularity of $(k,J)\in V\times W$ provide more
regularity on $f=\phi(k,J)$ by standard bootstrapping argument for
elliptic equations. In particular, we have the following lemma:
\begin{lemma}
  \label{lemma:bootstrap}
  If $(k,J)\in V\times W$ are smooth, then $f=\phi(k,J)$ is smooth and so is
  $d^*\alpha_H$ for the corresponding relatively HSLAG embedding.
\end{lemma}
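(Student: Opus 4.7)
The plan is to read the equation $\Psi(f,k,J)=0$ as an elliptic system and bootstrap. I would first decompose $f=f_L+h$ with $f_L\in\cK_L$ and $h\in\cH$. The finite-dimensional summand is automatically smooth: $\cK_L$ is the image under $\ell^*$ of the space $\cK_M^o\subset C^\infty(M)$ of Killing potentials, or equivalently the kernel of the elliptic operator $\Box_{\ell,J_0}$ with smooth coefficients, so $\cK_L\subset C^\infty(L)$ and any $f_L$ in this finite-dimensional space is smooth regardless of the a priori Hölder regularity of $f$. The substantive issue is to upgrade $h$.

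Unwinding the definition of $\Psi$, the equation reads
\begin{equation*}
d^*\alpha_H(\ell_{h+k},J) + \pi_{h+k}(f_L+h) = 0.
\end{equation*}
For smooth $(k,J)$, I view this as a quasilinear fourth-order equation in $h$. The Lagrangian embedding $\ell_{h+k}$ is given by the graph of $d(h+k)$ in the Weinstein tube, so its induced metric, second fundamental form and mean curvature involve derivatives of $h+k$ up to order three, and $d^*\alpha_H$ is fourth-order in $h+k$. Its linearization at $h=0$ coincides with $\Box_{\ell,J_0}$ from \eqref{eq:box} up to lower-order corrections in the non-integrable case, and in particular has elliptic principal symbol $|\xi|^4$. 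The projection term $\pi_{h+k}(f_L+h)$ takes values in the finite-dimensional space $\fK_{h+k}$, spanned by restrictions along $\ell_{h+k}$ of a fixed finite collection of smooth functions from $\cK_M^o$; hence it contributes only lower-order perturbations.

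The bootstrap then proceeds by induction on $j\ge 4$. Assume $h\in C^{j,\eta}(L)$. Then the coefficients of $d^*\alpha_H(\ell_{h+k},J)$, viewed as a linear fourth-order operator in the top-order derivatives of $h$, are smooth functions of the jets of $h+k$ and $J$ up to order three, and therefore lie in $C^{j-3,\eta}$. The projection term $\pi_{h+k}(f_L+h)$ is built from pointwise evaluations of $h+k$ along smooth data from $\cK_M^o$ and smooth $L^2$-orthogonalization, so lies in $C^{j,\eta}$. Standard Schauder theory for fourth-order elliptic equations (biharmonic with Hölder coefficients perturbed by lower-order terms) upgrades $h$ to $C^{j+1,\eta}(L)$. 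Iterating yields $h\in C^\infty(L)$, whence $f$ is smooth and, since $d^*\alpha_H$ is a smooth nonlinear expression in finitely many derivatives of $h+k$ and $J$, so is $d^*\alpha_H$.

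The main subtlety I foresee is checking that the dependence of the coefficients and of the finite-dimensional projection $\pi_{h+k}$ on $h+k$ respects the Hölder scale at each step. Because $\pi_{h+k}$ is the $L^2$-orthogonal projection onto $\fK_{h+k}$, and $\fK_{h+k}$ varies smoothly with $h+k$ in $C^{4,\eta}$ through its spanning family of pulled-back smooth functions, this dependence is as smooth as needed and does not obstruct the induction; once this point is in hand, the bootstrap is routine.
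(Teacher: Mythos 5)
Your bootstrap is exactly the argument the paper has in mind: Lemma~\ref{lemma:bootstrap} is stated there with no written proof beyond the remark that it follows ``by standard bootstrapping argument for elliptic equations,'' and your decomposition $f=f_L+h$ (with $f_L\in\cK_L$ smooth as the kernel of the smooth-coefficient elliptic operator $\Box_{\ell,J_0}$), the quasilinear fourth-order reading of $d^*\alpha_H(\ell_{h+k},J)=-\pi_{h+k}(f)$, and the Schauder induction fill in precisely that standard argument. The only nitpick is that $\pi_{h+k}(f)$ lies a priori in $C^{j-1,\eta}$ rather than $C^{j,\eta}$ (the pullbacks $v\circ\ell_{h+k}$ spanning $\fK_{h+k}$ cost one derivative of $h+k$ through the graph of $d(h+k)$), but this is still one order better than the coefficient regularity $C^{j-3,\eta}$ and does not affect the induction.
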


It is worth pointing out that
many cases are dealt with using the following corollary:
\begin{cor}
Let $\ell:L\to M$ be a rigid HSLAG embedding  into $(M,\omega,J_0)$ with
$\cK_L=\RR$. Then for all compatible almost complex structure $J$
sufficiently close to $J_0$ in $C^{2,\eta}$-norm, there exists a
Hamiltonian deformations $\tilde l:L\to M$ of $\ell$ which is a HSLAG
embedding with respect to $(M,\omega,J)$.
\end{cor}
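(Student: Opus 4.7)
The plan is to apply Proposition~\ref{prop:hslag} with the parameter $k=0$ and then show that the residual finite-dimensional obstruction to being HSLAG vanishes automatically when $\cK_L=\RR$. For $J$ sufficiently close to $J_0$ in $C^{2,\eta}$-norm, the proposition produces a unique small function $f=\phi(0,J)\in C^{4,\eta}(L)$ with $\Psi(f,0,J)=0$. Writing $f=f_L+h$ according to the splitting~\eqref{eq:splith}, the Lagrangian embedding $\tilde\ell=\ell_h:L\to M$ is a Hamiltonian deformation of $\ell$, and the vanishing of $\Psi(f,0,J)$ reads $d^*\alpha_H=-\pi_h(f)\in\fK_h$; that is, $\tilde\ell$ is relatively HSLAG with respect to $(M,\omega,J)$.

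Next I would use $\cK_L=\RR$ to pin down $\fK_h$. By rigidity the restriction map $\ell^*:\cK_M^o\to\cK_L=\RR$ is an isomorphism, so $\cK_M^o$ is spanned by a single Killing potential $v_0\in\cK_M$ normalized so that $\ell^*v_0=1$. Hence $\fK_h=\tilde\ell^*(\cK_M^o)=\RR\cdot\tilde\ell^*v_0$, and there is a unique scalar $\lambda\in\RR$ (depending on $J$) with $d^*\alpha_H=\lambda\,\tilde\ell^*v_0$.

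The final step is to show $\lambda=0$. Since $L$ is closed, the codifferential of any $1$-form integrates to zero, whence
\begin{equation*}
0=\int_L d^*\alpha_H\,\vol^{g_L}=\lambda \int_L \tilde\ell^*v_0\,\vol^{g_L}.
\end{equation*}
By construction $\tilde\ell$ converges to $\ell$ as $J\to J_0$, and $\ell^*v_0=1$, so the integral $\int_L\tilde\ell^*v_0\,\vol^{g_L}$ is close to $\vol(L)>0$ and in particular nonzero for $J$ near enough to $J_0$. This forces $\lambda=0$, so $d^*\alpha_H=0$ and $\tilde\ell$ is HSLAG with respect to $(M,\omega,J)$, as required. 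The only delicate point is conceptual rather than analytic: the output of the implicit function theorem is a priori only relatively HSLAG, and one has to exploit the tautological identity $\int_L d^*\alpha_H\,\vol^{g_L}=0$ on the closed manifold $L$ to close the one-dimensional gap between relatively HSLAG and HSLAG.
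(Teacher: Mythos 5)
Your proposal is correct and follows essentially the same route as the paper's own proof: apply Proposition~\ref{prop:hslag} with $k=0$ to obtain a relatively HSLAG Hamiltonian deformation $\ell_h$, then kill the one-dimensional obstruction using the tautological identity $\int_L d^*\alpha_H\,\vol^{g_L}=0$ on the closed manifold $L$. The only cosmetic difference is that the paper implicitly takes the constant function as the generator of $\cK_M^o$ (constants are Killing potentials), so the obstruction reads $d^*\alpha_H=c$ and vanishes by $L^2$-orthogonality to constants, whereas you keep a general generator $v_0$ with $\ell^*v_0=1$ and argue by continuity that $\int_L\tilde\ell^*v_0\,\vol^{g_L}\neq 0$ for $J$ near $J_0$ --- an equally valid, and if anything slightly more careful, version of the same argument.
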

\begin{proof}
For $J$ sufficiently close to $J_0$, we use the decomposition
$\phi(0,J)=f_L+h$ given by the splitting~\eqref{eq:splith}. By
assumption $f_L$ must be a constant.
  The embedding $\ell_{h}:L\to M$ satisfies the equation $d^*\alpha _H = c$,
  for some constant $c\in \RR$,
  with respect to the almost K\"ahler structure $(M,\omega,  J)$. Since $d^*\alpha_H$ is $L^2$-orthogonal to constants, we deduce
  that $c=0$.
\end{proof}

\subsection{Residual isometry group action and killing the
  obstruction}
\label{sec:residual}
The next step is to look for solutions of the equation $\Psi=0$ such
that the finite dimensional obstruction \eqref{eqOBSTmaslov} vanishes.
The residual
group action of $G$ on $W$ is the key to achieve this goal. Indeed, we have
a {\it modified volume functional}
$$\voltilde: W \to \RR$$
defined by
\begin{equation}
  \label{eq:volfun}
  \voltilde(J) = \vol(L,\ell^*_{h}g_{ J}).
\end{equation}
where $\phi(0,J)=f =f_L+h\in \cK_L\oplus \cH$ is given by
Proposition~\ref{prop:hslag}.

Notice that $\voltilde$ is a perturbation of the volume
functional  
$$\vol:W\to \RR$$ 
of $\ell:L\to M$
 defined by $\vol(J)=\vol(L,\ell^*g_J)$. Nevertheless, $\voltilde$ and
 $\vol$ need not to agree generally.

Each $G$-orbit of almost complex structure in $W$, is compact, since
$G$ is. Thus the modified volume
functional $\voltilde$ restricted to a $G$-orbit admits critical points, for
instance a minimum. Let $J$ be such a point in a given orbit.
Then we have the following result.
\begin{prop}
  \label{prop:critical}
Let $J$ be a smooth almost complex structure, which is a critical
point of the modified volume functional $\voltilde$ given by~\eqref{eq:volfun},
restricted to a $G$-orbit of $W$.
Then, the Lagrangian embedding $\ell_h:L\to M$ deduced from
$\phi(0,J)=f_L+h$  via Proposition~\ref{prop:hslag} is HSLAG.
\end{prop}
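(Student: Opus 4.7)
The plan is to compute the derivative of $\voltilde$ along the $G$-orbit as a first-variation-of-volume formula for a Hamiltonian family of Lagrangians in $(M,\omega,J)$, and then use rigidity to deduce that the obstruction $d^*\alpha_H$ vanishes.

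Fix $X\in\g:=\Lie(G)$, set $u_t=\exp(tX)$ and $J_t=u_t^*J$. Since $u_t$ is a symplectomorphism, $u_t^*g_J=g_{J_t}$, so
$$\voltilde(J_t)=\vol(L,\ell_{h(J_t)}^*g_{J_t})=\vol\bigl(L,(u_t\circ\ell_{h(J_t)})^*g_J\bigr).$$
Thus $\voltilde(J_t)$ is the volume in $(M,\omega,J)$ of the family $\tilde\ell_t:=u_t\circ\ell_{h(J_t)}$. As $u_t\in\Ham_\omega$ and $\ell_{h(J_t)}$ is a Hamiltonian deformation of $\ell$ (given by adding the exact $1$-form $dh(J_t)$ in the Weinstein neighborhood), $\tilde\ell_t$ is a Hamiltonian deformation of $\tilde\ell_0=\ell_h$ in $(M,\omega,J)$, whose Hamiltonian potential on $L$ is
$$v_{\rm tot}=\ell_h^*v_X+\dot h_X,\qquad \dot h_X:=\frac{d}{dt}\bigg|_{t=0}h(u_t^*J)\in\cH,$$
with $v_X\in\cK_M$ the Killing potential of $X$. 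The first-variation formula of the volume, combined with the critical-point hypothesis, then gives for every $X\in\g$
$$0=\frac{d}{dt}\bigg|_{t=0}\voltilde(J_t)=-\int_L(d^*\alpha_H)(\ell_h^*v_X+\dot h_X)\,\vol^{g_L},$$
where $H$ is the mean curvature of $\ell_h$ in $(M,\omega,J)$ and $g_L=\ell_h^*g_J$. By Proposition~\ref{prop:hslag} we have $d^*\alpha_H\in\fK_h=\ell_h^*\cK_M^o$, so there is a unique $w\in\cK_M^o$ with $\ell_h^*w=d^*\alpha_H$, and the critical-point condition reads
$$B_J(w,X):=\int_L(\ell_h^*w)(\ell_h^*v_X+\dot h_X)\,\vol^{g_L}=0\quad\forall\, X\in\g.$$

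To finish, I would show that the finite-dimensional bilinear pairing $B_J\colon\cK_M^o\times\g\to\RR$ is non-degenerate in its first argument for $J$ in a sufficiently small neighborhood of $J_0$, shrinking $W$ if necessary. At $J=J_0$, uniqueness in Proposition~\ref{prop:hslag} gives $\phi(0,J_0)=0$, hence $h=0$ and $\dot h_X=0$, so
$$B_{J_0}(w,X)=\int_L(\ell^*w)(\ell^*v_X)\,\vol^{g_{L,0}}.$$
By rigidity the map $\g\to\cK_L$, $X\mapsto\ell^*v_X$, is surjective; so if $B_{J_0}(w,\cdot)=0$ then $\ell^*w\in\cK_L$ is $L^2$-orthogonal to $\cK_L$, forcing $\ell^*w=0$ and hence $w=0$ by injectivity of $\ell^*$ on $\cK_M^o$. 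Since $\cK_M^o$ is finite-dimensional, the injectivity of $w\mapsto B_J(w,\cdot)$ persists under small perturbations of $J$, yielding $w=0$ and therefore $d^*\alpha_H=0$, so $\ell_h$ is HSLAG. The main technical obstacle is the careful identification of $v_{\rm tot}$: one must verify rigorously that the composition $u_t\circ\ell_{h(J_t)}$ really is a Hamiltonian family in $(M,\omega,J)$ and that its potential splits additively into the Killing potential from $u_t$ and the variation $\dot h_X$ of the relative term; once this bookkeeping in the Weinstein neighborhood is in place, the non-degeneracy argument via rigidity is essentially automatic.
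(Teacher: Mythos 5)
Your overall strategy is sound, and it takes a genuinely different route from the paper's at the decisive step. Like the paper, you reduce criticality along the $G$-orbit to the relation $\int_L (d^*\alpha_H)(\ell_h^*v_X+\dot h_X)\,\vol^{g_L}=0$ and use rigidity to write $d^*\alpha_H=\ell_h^*w$ with $w\in\cK_M^o$. But where you then argue softly (nondegeneracy of the finite-dimensional pairing $B_{J_0}$, propagated by continuity in $J$), the paper argues quantitatively and in a \emph{single} direction: it takes $X=X_v$ where $v\in\cK_M$ restricts to the obstruction $\psi=d^*\alpha_H$ itself, differentiates the identity $\Psi(\tilde f_t,k_t,J)=0$ with $\dot k=\psi$, and invokes the operator estimates of Lemmas~\ref{lemma:unifest}, \ref{lemma:piest} and \ref{lemma:boxest} to get $\|\dot h\|_{L^2}\le\frac12\|\psi\|_{L^2}$, whence the first variation satisfies $\dot\vol\ge\frac12\|\psi\|^2>0$, contradicting criticality unless $\psi=0$. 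The paper's route buys uniform estimates on the fixed neighborhoods $U\times W$ and never differentiates the solution map in $J$ across the neighborhood; your route is conceptually cleaner, but the continuity of $J\mapsto\dot h_X(J)=D_J\phi(0,\cdot)[\cL_{X}J]$ that your perturbation step needs is delicate, because $\cL_X J$ loses one derivative relative to $J$. It holds after shrinking $W$ in a finer topology (say $C^{3,\eta}$), which your ``shrinking $W$ if necessary'' can absorb, but you should make that explicit.

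There is one genuine, though fixable, gap: the claim ``by rigidity the map $\g\to\cK_L$, $X\mapsto\ell^*v_X$, is surjective'' is not what rigidity gives. Rigidity gives surjectivity of $\ell^*:\cK_M\to\cK_L$, and $\cK_M$ contains the constants ($\cK_L=\ker\Box$ always contains $\RR$), while criticality of $\voltilde$ can never pair $w$ against constants: adding a constant to $v_X$ does not change $B_J(w,X)$ at all. So if no $v_X$ restricts to a nonzero constant on $L$, your orthogonality relations only give $\ell^*w\perp\ell^*\{v_X:X\in\g\}$, a subspace that may be of codimension one in $\cK_L$, and the argument as written does not force $w=0$. (It happens to be harmless in the toric examples, where fields tangent to $L$ have constant potentials along $L$, but not in general.) The fix is one line: since $d^*\alpha_H$ is a codifferential, $\int_L d^*\alpha_H\,\vol^{g_L}=0$, so $\ell_h^*w$ automatically has zero mean; restrict $B_J$ to the zero-mean subspace of $\cK_M^o$ (which varies continuously with $J$). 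Then at $J_0$, zero mean together with $\ell^*w\perp\ell^*\{v_X\}$ yields $\ell^*w\perp\cK_L$, hence $\ell^*w=0$ and $w=0$ by injectivity of $\ell^*$ on $\cK_M^o$, and your finite-dimensional perturbation argument closes as intended.
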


As a direct consequence, we obtain a proof of one of our main results:
\begin{proof}[Proof of Theorem~\ref{theo:A}]
We have a map $W\to U$ given by $J\mapsto \phi(0,J)$. For each $J$ the
decomposition $\phi(0,J)=f_L+h \in\cK_L\oplus \cH$ provides a
Lagrangian embedding $\ell_h$, which is a Hamiltonian deformation of
$\ell$.  An easy exercice of symplectic geometry shows that one can define a smooth map $\psi$ on
$W$ such that $\psi(J)$ is a Hamiltonian
transformation of $(M,\omega)$ with the property that
$\psi(J)\circ\ell = \ell_h$ and $\psi(J_0)=\id$ as
in Theorem~\ref{theo:A}. We give an outline of the argument to keep
this paper self-contained. The function $h$ is a priori defined on
$L$. However a tubular neighborhood of $\cL=\ell(L)$ is identified
with a neighborhood $\cV$ of the $0$-section of the contangent bundle
$T^*L\to L$. The function $h$ can be understood as a function on $\cV$
by pull back. We fix a suitable smooth compactly supported cut-off
function $\varphi$ on $\cV$ equal to $1$ in a neighborhood of $\cL$. Then
$\varphi h$ makes sense as a globally defined function on $M$. The
corresponding Hamiltonian vector field $X_{\varphi h}$ is well defined on
$(M,\omega)$. By integrating upto time $1$, the flow of the vector
field defines  Hamiltonian transformation $\psi(J)$ of $(M,\omega)$, with
regularity a $C^{4,\eta}$. If $J$ is sufficiently close to $J_0$, the
function $h$ is very close to $0$ in $C^{3,\eta}$-norm. In particular
we have $\psi(J)\circ\ell = \ell_h$, by construction.

By assumption $J$ is smooth here, hence by Lemma~\ref{lemma:bootstrap} 
so is $h$. Thus $\psi(J)$ is also a smooth Hamiltonian transformation
so that we can avoid the complication of introducing a group of Hamiltonian
transformations with suitable Hölder topology.

For the second part of the theorem, given $J\in W$, it suffices to find $u\in G$ such that $u\cdot J$ is a
critical point of the functional $\voltilde$ (such $u$ exists by
compactness of $G$). Then $u\cdot J$ 
satisfies the claim of Theorem~\ref{theo:A} thanks to Proposition~\ref{prop:critical}.  
\end{proof}

 The rest of this section is devoted to the proof of the proposition.  
\begin{proof}[Proof of Proposition \ref{prop:critical}]
  Let $f=\phi(0,J)$. Notice that by Lemma \ref{lemma:bootstrap}, the
  function $f$ is smooth and so is $h$. The smooth embedding, $\ell_{ h}:L\to M$, 
satisfies the equation $d^*\alpha_H=\psi$ for some $\psi\in\fK_{
  h}$, which is  also smooth.
By definition, $\psi= v \circ \ell_{ h}$ for some function $v\in
\cK_M$, by rigidity.

Let $u_t\in G$ be a $1$-parameter subgroup of $G$ such that
$u_0=id$ and $\frac{du_t(x)}{dt}|_{t=0}=X_v(x)$. In other words, the
tangent vector to the $1$-parameter subgroup is the Hamiltonian vector
field $X_v$ associated to $v$.

We consider the orbit $\tilde J_t=u_t^*J$  under the
$1$-parameter subgroup action. Since $J$ is a critical point of the modified
volume functional on its $G$-orbit, we have
$$
\left . \frac{d}{dt}\right |_{t=0}\voltilde (\tilde J_t) = 0.
$$
Looking more closely at the modified volume functional, this means that the
volume is critical for the $1$-parameter family of Lagrangian
embeddings $\ell_{\tilde h_t}:L\to M$ with respect to the almost
complex structures $\tilde J_t$, where $\phi(0,\tilde J_t)=\tilde f_t =\tilde
f_{L,t}+\tilde h_t\in \cK_L\oplus \cH$. By Lemma~\ref{lemma:bootstrap},
the functions $\tilde f_t$, $\tilde h_t$ and $\tilde f_{L,t}$ are all
smooth. Furthermore, they depend smoothly on $t$.

Changing the point of view, this means that the volume of the
Lagrangian embeddings $\tilde \ell_t:=u_t\circ \ell_{\tilde h_t}:L\to M$  is critical with
respect to the fixed almost complex structure $J$.

Since the volume of a Lagrangian embedding depends only on the volume
of its image, we may work upto the action $\Diff(L)$. Hence, we may
assume after composing $\tilde \ell_t$ on the right by a suitable
family of diffeomorphism that they are given by $\ell_{\tilde h_t + k_t}:L\to M$, where $k_t$ is a smooth
family of functions on $L$ such that $\frac{\del k_t}{\del t}|_{t=0}=\psi$.

The solution of the relative HSLAG equations are invariant under the
action of $\Diff(L)$, therefore we have a one parameter family of
solutions of the equation
$$
\Psi(\tilde f_t, k_t,J)=0,
$$
with critical volume at $t=0$.

Differentiating at $t=0$ gives the identity
\begin{equation}
  \label{eq:diffpsi}
\frac{\del\Psi}{\del f}|_{(f,0,J)}\cdot \dot f =- \frac{\del\Psi}{\del k}|_{(f,0,J)}\cdot
\dot k
\end{equation}
where
$$
\dot f=\left . \frac {\del \tilde f_t}{\del t}\right |_{t=0} \quad \mbox{ and
} \quad \dot k=\left . \frac
     {\del k_t}{\del t}\right |_{t=0} =\psi.
     $$
The computation of the operator $\frac{\del\Psi}{\del k}$ is similar
to $\frac{\del\Psi}{\del h}$
and we find
$$
\left .\frac{\del\Psi}{\del k}\right |_{(f,0,J)}\!\!\!\!\!\cdot
\dot k = L_{f,J}\cdot \dot k +\Box_{\ell_h,J}\dot k.
$$
where  $L_{f,J}\cdot\dot k = \frac
{d}{dt}\pi_{h+k_t}(f)|_{t=0}$.
We choose $\epsilon_0>0$ sufficiently small, to be fixed afterward.
According to Lemma~\ref{lemma:piest}, upto passing to sufficiently small
open sets $U$ and $W$, we have the estimates
$$
\|L_{f,J}\cdot \dot k\|_{L^2}\leq \epsilon_0 \|\dot k\|_{L^2}
$$
and since $\dot k=\psi \in \fK_h$, by Lemma~\ref{lemma:boxest}
$$
\|\Box_{\ell_h,J}\dot k \|_{L^2}\leq \epsilon_0 \|\dot k\|_{L^2}.
$$

We deduce that the $L^2$-norm of the LHS of \eqref{eq:diffpsi} is
bounded by $2\epsilon_0\|\psi\|_{L^2}$. Applying
Lemma~\ref{lemma:unifest}, we obtain the estimate
$$
\|\dot f\|_{L^2}\leq 2C\epsilon_0 \|\psi\|_{L^2}.
$$
In particular we have an estimate
$$
\|\dot h\|_{L^2}\leq 2C\epsilon_0 \|\psi\|_{L^2}.
$$
If we choose $\epsilon_0 =\frac 1{4C}$, we have
$$
\|\dot h\|_{L^2} \leq \frac 12\|\psi\|_{L^2}.
$$
 By the first variation formula,
we have 
$$
\dot \vol = \ip {\alpha_H,d\dot h + d\psi}  = \ip{d^*\alpha_H,\dot h +  \psi}
= \ip{\psi,\dot h + \psi}=\|\psi\|^2 + \ip{\psi,\dot h}.
$$
By Cauchy-Schwartz inequality, it follows that
$$
\dot\vol
\geq \|\psi\|^2 - \|\psi\|\|\dot h\|\geq
\frac 12\|\psi\|^2.
$$
This is a contradiction unless $\psi=0$.
\end{proof}
Here are the technicals lemmata used in the proof of
Proposition~\ref{prop:hslag}. These results are standard and some
proofs may be omited.

The operator $\frac{\del\Psi}{\del h}|_{0,0,J_0}$ is an isomorphism.
 The first eigenvalue of small perturbations of this
operator remain uniformly bounded away from $0$ in the sense of the
following lemma:
\begin{lemma}
  \label{lemma:unifest}
  For all neighborhood $U$ and $W$ of Proposition \ref{prop:hslag} chosen sufficiently small, there exists a constant
  $C>0$ such that
   all $(f,J)\in U\times W$ and  $ F \in
  C^{4,\eta}(L)$ 
  $$
\|F\|_{L^2}\leq C \left \|\left .\frac{\del\Psi}{\del h}\right |_{(f,0,J)}\!\!\!\!\!\cdot
F\right \|_{L^2}.
$$
\end{lemma}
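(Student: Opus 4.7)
The plan is a standard small-perturbation argument for linear elliptic isomorphisms, which reduces the desired estimate to the invertibility of the base operator at $(0, 0, J_0)$ between the right Hilbert scales.

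First, I would identify the base operator $A_0 := \frac{\partial \Psi}{\partial h}\big|_{(0, 0, J_0)}$ explicitly. By the formula for the differential of $\Psi$ computed just before Proposition~\ref{prop:hslag} and the splitting~\eqref{eq:splith}, $A_0$ sends $F = F_L + h \in \cK_L \oplus \cH$ to $F_L + \Box_{\ell, J_0} h$. Since $\ell$ is HSLAG, Lemma~\ref{lemma:sa} gives that $\Box_{\ell, J_0}$ is formally self-adjoint, and its kernel equals $\cK_L$ by definition; moreover it is a fourth-order elliptic operator with leading term $\Delta^2$ by~\eqref{eq:box}. Standard elliptic Fredholm theory thus identifies $\Box_{\ell, J_0}$ with an isomorphism $\cH \cap H^4(L) \to \cH \cap L^2(L)$, and combined with the identity on $\cK_L$ this promotes $A_0$ to a bounded linear isomorphism $H^4(L) \to L^2(L)$. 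In particular there is a constant $C_0 > 0$ such that
\[
\|F\|_{L^2} \leq \|F\|_{H^4} \leq C_0 \|A_0 F\|_{L^2}, \qquad \forall\,F \in H^4(L).
\]

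Next, for $(f, J) \in U \times W$ I would set $A_{f, J} := \frac{\partial \Psi}{\partial h}\big|_{(f, 0, J)} = A_0 + R_{f, J}$. The remainder $R_{f, J}$ is a linear differential operator of order at most four whose coefficients are algebraic expressions in the jets of $\ell_f$ (entering through the induced metric, the second fundamental form, and the projection $\pi_h$) and of $J$, and they vanish identically at $(f, J) = (0, J_0)$. By continuity of these jet-dependent coefficients in the $C^{0, \eta}$-topology, I can shrink $U$ and $W$ so that $\|R_{f, J}\|_{H^4 \to L^2} \leq \varepsilon$ for any prescribed $\varepsilon > 0$. Choosing $\varepsilon \leq (2 C_0)^{-1}$, the factorization $A_{f, J} = A_0(I + A_0^{-1} R_{f, J})$ together with a Neumann series yields invertibility of $A_{f, J} : H^4 \to L^2$ with $\|A_{f, J}^{-1}\|_{L^2 \to H^4} \leq 2 C_0 =: C$, whence
\[
\|F\|_{L^2} \leq \|F\|_{H^4} \leq C \|A_{f, J} F\|_{L^2}
\]
for every $F \in C^{4, \eta}(L) \subset H^4(L)$.

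The one mildly technical step is justifying that $R_{f, J}$ has small $H^4 \to L^2$ operator norm once $(f, J)$ is close enough to $(0, J_0)$. Given the explicit formula~\eqref{eq:box} (and its variant when $J$ is only almost complex, which has the same leading term $\Delta^2$), this reduces to the continuity of finitely many Hölder norms of metric quantities as a function of $(f, J)$ in the product $C^{4, \eta} \times C^{2, \eta}$ topology, which is routine.
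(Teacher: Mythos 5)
Your proposal is correct and takes essentially the approach the paper intends: the paper omits the proof as ``standard,'' stating only that $\frac{\del\Psi}{\del h}\big|_{(0,0,J_0)}$ is an isomorphism whose small perturbations remain uniformly bounded away from $0$, which is exactly your isomorphism-plus-Neumann-series argument. Your identification of the base operator as $F_L+h\mapsto F_L+\Box_{\ell,J_0}h$, with invertibility on the complement of $\cK_L$ coming from ellipticity, self-adjointness (Lemma~\ref{lemma:sa}) and the definition $\cK_L=\ker\Box$, together with continuity of the coefficients in $(f,J)\in C^{4,\eta}\times C^{2,\eta}$, supplies precisely the details the paper leaves out.
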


\begin{lemma}
    \label{lemma:piest}
  For all $\epsilon >0$ there are sufficiently small open sets $U$ and
  $W$  in Proposition \ref{prop:hslag} such that for all $(f,J)\in U\times W$ and all function $F\in \fK_h$ 
  $$
\|L_{f,J}F\|_{L^2}\leq \epsilon \|F\|_{L^2}.
  $$
\end{lemma}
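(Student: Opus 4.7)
The key observation is that the operator $L_{f,J}$ vanishes identically at $f=0$, so by continuity in $f$ it becomes arbitrarily small on a sufficiently small neighborhood $U$ of the origin. The finite-dimensionality of $\fK_h$ will convert a Hölder estimate into the desired $L^2$ bound with a constant that is uniform in $(f,J)$.

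First, I would unpack $L_{f,J}$ explicitly. From $\Psi(f,k,J) = d^*\alpha_H + \pi_{h+k}(f)$, where $h$ denotes the $\cH$-component of $f=f_L+h$, the chain rule gives
\[
L_{f,J}\cdot \dot k \;=\; \left.\frac{d}{dt}\right|_{t=0}\pi_{h+k_t}(f)\;=\;(D\pi)_h(\dot k)\cdot f,
\]
where $(D\pi)_h$ is the differential at $h$ of the smooth map $h'\mapsto \pi_{h'}$ from a neighborhood of $0$ in $C^{4,\eta}(L)$ into the bounded finite-rank projections of $C^{3,\eta}(L)$ onto $\fK_{h'}$ parallel to $\cH'$. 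The dependence on $J$ enters only implicitly through $h=h(f)$. In particular, $L_{0,J}\equiv 0$ for every $J$.

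Next, I would invoke the smoothness of $h'\mapsto \pi_{h'}$ (which comes from smoothness of $\ell_{h'}^*:\cK_M^o\to C^{3,\eta}(L)$, whose image is $\fK_{h'}$). This yields a bound, uniform for $(f,J)\in U\times W$ after shrinking these neighborhoods,
\[
\|(D\pi)_h(\dot k)\cdot f\|_{C^{3,\eta}}\;\leq\; C\,\|\dot k\|_{C^{4,\eta}}\,\|f\|_{C^{4,\eta}}.
\]
To convert this into an $L^2$ estimate on $\fK_h$, I would fix a basis $v_1,\dots,v_r$ of $\cK_M^o$ and use $e_i(h):=v_i\circ\ell_h$ as a smooth basis of $\fK_h$. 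Because $\fK_h$ is finite-dimensional and depends smoothly on $h$ and $L$ is compact, the norms $\|\cdot\|_{L^2}$ and $\|\cdot\|_{C^{4,\eta}}$ are equivalent on $\fK_h$ with constants uniform for $h$ in a small neighborhood of $0$. For $F\in \fK_h$ this gives
\[
\|L_{f,J}F\|_{L^2}\;\leq\; C_1\|L_{f,J}F\|_{C^{3,\eta}}\;\leq\; C_1 C\,\|f\|_{C^{4,\eta}}\,\|F\|_{C^{4,\eta}}\;\leq\; C_2\,\|f\|_{C^{4,\eta}}\,\|F\|_{L^2}.
\]

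To conclude, I would shrink $U$ (and accordingly $W$) so that $C_2\|f\|_{C^{4,\eta}}<\epsilon$ for all $f\in U$. The only subtle point is the uniformity of the constants, which reduces to the smooth variation of the finite-dimensional family $\fK_h$ together with compactness of $L$; no deeper analytic input is needed.
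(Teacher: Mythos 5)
Your argument is correct in substance and rests on exactly the same key observation as the paper's proof --- namely that $\pi_{h'}(0)=0$ identically, so $L_{0,J}\equiv 0$, combined with the fact that $\fK$ is a smooth finite-rank bundle --- but the execution is genuinely different. The paper argues softly, by contradiction and compactness: if the estimate fails, there is a sequence $(f_j,J_j)\to (0,J_0)$ and $F_j\in\fK_{h_j}$ with $\|F_j\|_{L^2}=1$ and $\|L_{f_j,J_j}F_j\|_{L^2}>\epsilon$; the finite rank of $\fK$ and the smoothly varying $L^2$-inner product allow extraction of a limit $F\in\fK_0=\fK_L$ with $\|F\|_{L^2}=1$ and $L_{0,J_0}F\neq 0$, contradicting $L_{0,J_0}\equiv 0$. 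You instead prove the effective bound $\|L_{f,J}F\|_{L^2}\leq C\,\|f\|_{C^{4,\eta}}\|F\|_{L^2}$ uniformly on small neighborhoods, by differentiating the projection-valued map $h'\mapsto \pi_{h'}$ and using uniform equivalence of norms on the fibers $\fK_h$ (via the smoothly varying basis $v_i\circ\ell_h$ and the positivity of the Gram matrix near $h=0$). Your route buys more than the stated lemma: an explicit rate, linear in $\|f\|_{C^{4,\eta}}$, and it makes visible that $L_{f,J}$ does not in fact depend on $J$ at all, since $h$ is the $\cH$-component of $f$ in the fixed splitting determined by $(\ell,J_0)$, and the spaces $\fK_{h'}$ and $\cH'$ are built from $J_0$ alone (your phrase ``the dependence on $J$ enters through $h=h(f)$'' slightly overstates this; $J$ only enters in the application, where $f=\phi(0,J)$). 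The price is the bookkeeping of uniform constants, which you correctly reduce to smoothness of the finite-rank family; the paper's contradiction argument avoids all constants at the cost of being non-quantitative.

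One small regularity slip: elements of $\fK_h$ have the form $v\circ\ell_h$ with $\ell_h$ the graph embedding of $dh$, $h\in C^{4,\eta}(L)$, so $\fK_h\subset C^{3,\eta}(L)$ and the quantity $\|F\|_{C^{4,\eta}}$ in your final chain of inequalities need not be finite; relatedly, feeding $F\in\fK_h$ to $(D\pi)_h$ as a $C^{4,\eta}$ direction is not literally licensed. This is harmless rather than a gap: since the target estimate is in $L^2$ and $\fK_h$ is finite dimensional with a smoothly varying basis, it suffices to lower the norms (estimate $(D\pi)_h$ into a weaker target norm on directions of lower regularity, or define $L_{f,J}$ on $\fK_h$ through its values on the basis $v_i\circ\ell_h$), and the uniform equivalence $\|F\|_{C^{3,\eta}}\leq C\|F\|_{L^2}$ on $\fK_h$ holds by the very argument you sketch. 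Note that the paper's statement of the lemma shares this wrinkle, and in the intended application (Proposition~\ref{prop:critical}) all data are smooth by Lemma~\ref{lemma:bootstrap}, so the issue evaporates there.
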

\begin{proof}
  The proof is easily done by contradiction. If the lemma is not true,
  there exists $\epsilon >0$ and a family $(f_j,J_j)\in U\times W$ converging toward
  $(0,J_0)$ and $F_j\in \fK_{h_j}$ such that
  $\|F_j\|_{L_2}=1$ while  $\|L_{f_j,J_j}F_j\|_{L^2}>\epsilon$.

  Using the fact that $\fK$ is a finite rank bundle over $U\times W$
  with a smoothly varying $L^2$-inner product, we  may assume, after
  passing to a subsequence, that $F_j$ converge to $F\in \fK_0=\fK_L$
  with the property that $\|F\|_{L ^2}=1$ and $L_{0,J_0}F \neq 0$. But
  this is impossible since $\pi_h(0)$ is identically $0$, so that
  $L_{0,J_0}\equiv 0$.
  \end{proof}

\begin{lemma}
    \label{lemma:boxest}
  For all $\epsilon >0$ there are sufficiently small open sets $U$ and
  $W$, as in  Proposition \ref{prop:hslag}, such that for all function $F\in \fK_h$, we have
  $$
\|\Box_{\ell_h,J}F\|\leq \epsilon \|F\|_{L^2}.
  $$
\end{lemma}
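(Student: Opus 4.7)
The plan is to proceed by contradiction in the same spirit as Lemma~\ref{lemma:piest}. Suppose the conclusion fails. Then there exist $\epsilon>0$ and sequences $(f_j,J_j)\in U\times W$ with $(f_j,J_j)\to(0,J_0)$ (write $f_j=f_{L,j}+h_j$ using the splitting~\eqref{eq:splith}) together with functions $F_j\in\fK_{h_j}$ satisfying $\|F_j\|_{L^2}=1$ and $\|\Box_{\ell_{h_j},J_j}F_j\|_{L^2}>\epsilon$. The goal is to extract a limit and reach a contradiction using the fact that $\fK_0=\cK_L=\ker\Box_{\ell,J_0}$ by rigidity.

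The first step is to lift $F_j$ to a bounded sequence in a fixed finite-dimensional space. By construction $\fK_{h_j}$ is the image of the injective map $\ell_{h_j}^*\colon\cK_M^o\to C^{3,\eta}(L)$, so I can write $F_j=\ell_{h_j}^*v_j$ for uniquely determined $v_j\in\cK_M^o$. The pull-back map $\ell_h^*$ depends continuously on $h\in U$, so the pulled-back $L^2$ norms on $\cK_M^o$ vary continuously (and are equivalent to any fixed norm on $\cK_M^o$ for $U$ small). Hence $\|v_j\|$ stays bounded in the finite-dimensional space $\cK_M^o$, and after passing to a subsequence $v_j\to v_\infty\in\cK_M^o$. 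Setting $F_\infty:=\ell^*v_\infty\in\fK_0=\cK_L$, the convergence $F_j\to F_\infty$ holds in $C^{4,\eta}(L)$ since it takes place inside a finite-dimensional space where all norms are equivalent, and $\|F_\infty\|_{L^2}=1$.

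The second step is a continuity statement for the operator $\Box_{\ell_h,J}$. From the explicit formula~\eqref{eq:box} (with the extra terms introduced in the non-integrable case as indicated after~\eqref{eq:box}), the coefficients of the fourth-order operator $\Box_{\ell_h,J}$ are built from the induced metric $\ell_h^*g_J$, the second fundamental form $B$, the mean curvature $H$, and the Ricci tensor of $g_J$ restricted to $\ell_h(L)$. Each of these depends continuously on $(h,J)\in U\times W$ as a tensor field of the appropriate Hölder regularity; therefore $\Box_{\ell_h,J}\colon C^{4,\eta}(L)\to C^{0,\eta}(L)$ is continuous in $(h,J)$ in the operator norm. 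Combined with $F_j\to F_\infty$ in $C^{4,\eta}$, this yields
\[
\Box_{\ell_{h_j},J_j}F_j\longrightarrow \Box_{\ell,J_0}F_\infty \quad\text{in }C^{0,\eta}(L),
\]
and a fortiori in $L^2$.

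The final step is the contradiction: since $F_\infty\in\cK_L=\ker\Box_{\ell,J_0}$ by the rigidity assumption, the right-hand side is zero, whereas the left-hand side has $L^2$-norm $>\epsilon$ for every $j$. The main subtlety is the second step, namely the uniform continuity of $\Box_{\ell_h,J}$ in its parameters; but because we only need to apply the operator to elements of the finite-dimensional family $\fK$, we never need estimates uniform in $F$ in an infinite-dimensional sense, and the coefficient-wise continuity of $\Box$ is then routine.
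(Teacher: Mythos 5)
Your proposal is correct and is essentially the paper's approach: the paper omits the proof of this lemma as standard, but its proof of the companion Lemma~\ref{lemma:piest} uses exactly your scheme --- contradiction, compactness in the smoothly varying finite-rank bundle $\fK$ over $U\times W$ (equivalently, boundedness of the lifts $v_j\in\cK_M^o$), passage to a limit $F_\infty\in\fK_0=\cK_L=\ker\Box_{\ell,J_0}$, and continuity of the coefficients of $\Box_{\ell_h,J}$ in $(h,J)$. One cosmetic slip: since $h_j\in C^{4,\eta}$ the embeddings $\ell_{h_j}$ (graphs of $dh_j$) are only $C^{3,\eta}$, so the convergence $F_j\to F_\infty$ holds in $C^{3,\eta}$ rather than $C^{4,\eta}$; this does not affect the argument, as the $F_j$ are pullbacks of smooth Killing potentials and $\Box_{\ell_h,J}F$ depends continuously on $(h,J,v)$ through the formula~\eqref{eq:box}, a level of rigor consistent with the paper's own.
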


\section{The volume functional}
This section gathers some results and observations on the volume
functional that will be needed to show that the space of positive
perturbations, introduced at \S\ref{sec:positive} in not empty, under mild assumption, in particular for the proof of Theorem~\ref{theo:pospertintro} and
Theorem~\ref{theo:pospert}.

\subsection{Main technical result}
We start with a Kähler manifold $(M,\omega,J_0)$. Let $G$ be its group of
Hamiltonian isometries. We consider a rigid HSLAG
embedding $\ell:L\to M$ as in Theorem~\ref{theo:A}. 

Let $G_\ell$ be the
subgroup of isometries of $G$ preserving the image of $\ell:L\to M$. In
other words $u\in G_\ell$ if and only if $u\circ\ell(L)=\ell(L)$. We
denote by $G_\ell^o$ the identity component of $G_\ell$.

We consider the subspace of compactible almost complex
structures which differ from $J_0$ by a diffeomorphism of $M$. The
connected component of $J_0$ in this space is denoted
$\cJ_\omega \subset\AC_\omega$.

The standard volume functional here refers to the map
$$
\vol_J : G/G^o_\ell\to \RR
$$
defined by $\vol_J([u])= \vol(L,\ell^*g_{u\cdot J})$. 


The variational formulas for the volume functional are much easier to
carry out assuming the base complex structure is integrable but the
result is likely to hold for almost K\"ahler metrics. Of course, it
holds for any almost K\"ahler metric $(J,\omega)$ in a sufficiently
small neighborhood of the K\"ahler metric $(J_0,\omega)$.  
Then main technical result  of this section is the following theorem:

\begin{theo}
\label{prop:evelinedeg}
  There exists a smooth $1$-parameter family of complex structures
  $J_s\in W$ defined for $s\geq 0$ such that 
  \begin{enumerate}
  \item   $J_0$ is our given complex structure
    \item $\ell:L\to M$ is HSLAG with respect to $J_s$ for all $s\geq
      0$
\item $\vol_{J_s}:G/G^o_\ell\to \RR$ admits a non-degenerate local
  minimum at $u=\id$ for all $s>0$.
  \end{enumerate}
\end{theo}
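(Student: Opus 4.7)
Since $G$ acts by isometries on $(M,\omega,J_0)$, every element $u\in G$ sends $\ell(L)$ to an isometric Lagrangian, so $\vol_{J_0}$ is constant on $G/G^o_\ell$; in particular its Hessian at $\id$ vanishes. The plan is to produce a perturbation direction whose linear-in-$s$ correction to the Hessian of $\vol_{J_s}$ at $\id$ is positive definite on the finite-dimensional quotient $\mathfrak{g}/\mathfrak{g}^o_\ell$, and then integrate it to a path along which $\ell$ remains HSLAG.

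For any smooth one-parameter family $\tilde J_s\in W$ with $\ell$ HSLAG throughout, the second variation formula gives
\[
\mathrm{Hess}_{\id}(\vol_{\tilde J_s})(X,X)=\bigl\langle\Box_{\ell,\tilde J_s}(\ell^*v_X),\,\ell^*v_X\bigr\rangle_{L^2(g_{\tilde J_s})},
\]
where $v_X\in\cK_M$ is a Killing potential for $X\in\mathfrak{g}$. By rigidity, $\ell^*v_X\in\cK_L=\ker\Box_{\ell,J_0}$, so the right-hand side vanishes at $s=0$. Differentiating in $s$ yields the quadratic form
\[
Q(X,X)=\bigl\langle\dot\Box\,\ell^*v_X,\,\ell^*v_X\bigr\rangle_{L^2(g_{J_0})},
\]
where $\dot\Box$ denotes the $s$-derivative of $\Box_{\ell,\tilde J_s}$ at $s=0$. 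For $X\in\mathfrak{g}^o_\ell$ the one-parameter subgroup $\exp(tX)$ preserves the image $\ell(L)$, so $\vol_{\tilde J_s}(\exp(tX))$ is independent of $t$ and $Q$ descends to a quadratic form on $\mathfrak{g}/\mathfrak{g}^o_\ell$.

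The main technical step is then to show that, for a suitable choice of initial velocity $\dot{\tilde J}\in T_{J_0}W$ among the HSLAG-preserving directions (which form a finite-codimension subspace, by the implicit function analysis of Proposition~\ref{prop:hslag}), the form $Q$ is positive definite. The target is a finite-dimensional space of symmetric bilinear forms on $\mathfrak{g}/\mathfrak{g}^o_\ell$, while the source is infinite-dimensional; it thus suffices to produce enough explicit test perturbations for the image of $\dot{\tilde J}\mapsto Q$ to contain a positive definite element. Using formula \eqref{eq:box} for $\Box$ and localizing the support of $\dot{\tilde J}$ in both tangential and normal directions inside a Lagrangian neighborhood of $\ell(L)$, one should be able to prescribe the entries of the matrix $(\langle\dot\Box v_i,v_j\rangle)$ against a chosen basis $v_i$ of $\ell^*(\cK_M)$. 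This localization-and-surjectivity argument is the main obstacle in the proof.

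Once such a $\dot{\tilde J}$ is produced, I would integrate it to a smooth path of compatible almost complex structures in $W$. If $\ell$ fails to be HSLAG along this path, correct it by the construction of \S\ref{sec:defo}: for each $s$, Proposition~\ref{prop:hslag} supplies a Hamiltonian deformation giving a relatively HSLAG embedding, and the $G$-orbit minimization of Proposition~\ref{prop:critical} kills the residual obstruction. Absorbing the resulting Hamiltonian transformation into the family yields $\tilde J_s\in W$ with $\ell$ HSLAG for every $s\geq 0$ and whose initial velocity is still $\dot{\tilde J}$. Then
\[
\mathrm{Hess}_{\id}(\vol_{\tilde J_s})=s\,Q+O(s^2)
\]
is positive definite for all sufficiently small $s>0$, establishing the three conclusions after renaming $\tilde J_s$ as $J_s$.
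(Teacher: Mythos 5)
Your reduction is sound as far as it goes: at $s=0$ the Hessian of $\vol_{J_0}$ on $G/G^o_\ell$ vanishes because restrictions of Killing potentials lie in $\ker\Box_{\ell,J_0}$, and the first-order correction in $s$ is indeed $Q(X,X)=\langle\dot\Box\,\ell^*v_X,\ell^*v_X\rangle$ (the extra terms pair against $\Box_{\ell,J_0}\ell^*v_X=0$). But your proof stops exactly where the theorem lives: you never produce a direction $\dot{\tilde J}$ making $Q$ positive definite, and you flag this localization-and-surjectivity step yourself as ``the main obstacle''. Finite-dimensionality of the space of quadratic forms on $\fg/\fg^o_\ell$ gives no surjectivity for free, and prescribing the matrix $\langle\dot\Box v_i,v_j\rangle$ through \eqref{eq:box} would require controlling how $B$, $H$ and $\Ric$ vary under $\dot{\tilde J}$ --- a computation you do not attempt. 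The paper works one derivative lower and is fully explicit: in a Weinstein neighborhood it takes $\varphi(\alpha)=\frac{1}{-4(n+2)}\,d^g(\pi(\alpha),\alpha)^4$ (fourth power of the fiberwise distance to $\ell(L)$), defines $J_s$ by the flow $\frac{\del}{\del s}J_s=-\cL_{X_\varphi}J_s+J_s\cL_{X_\varphi}J_s$ of \eqref{varJalongVARPHI}, and computes via Proposition~\ref{propFORMULAvarVOL} that $\frac{\del}{\del s}\vol_{J_s}(u)=\int_L(\Delta^{g_s}\varphi)\circ u\circ\ell\;\vol$. Since $\Delta^{g}\varphi=r_p^2+\cO(r_p^3)$, the function $u\mapsto\frac{\del}{\del s}\vol_{J_s}(u)$ has vanishing first derivative at $\id$ and Hessian positive definite transverse to $G^o_\ell$, giving $\vol_{J_s}(u_t)=c+b_st^2+o(t^2)$ with $b_s>0$ for $s>0$; no abstract surjectivity argument is needed.

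The second gap is your mechanism for conclusion (2). Correcting the path with Proposition~\ref{prop:hslag} and Proposition~\ref{prop:critical} yields a Hamiltonian deformation $\ell_h$ of $\ell$ that is HSLAG, not $\ell$ itself, whereas the theorem demands that the \emph{fixed} embedding $\ell$ be HSLAG for every $s\geq 0$. Worse, at $s=0$ the modified volume is constant on each $G$-orbit, so the minimizing point is maximally non-unique there and cannot be selected smoothly down to $s=0$; and invoking the orbit minimization of Proposition~\ref{prop:critical} inside this construction is circular, since making that minimization non-degenerate is precisely what the theorem is for. Absorbing the correcting Hamiltonian transformations into the family would moreover change the initial velocity $\dot{\tilde J}$ you worked to prescribe, and with it $Q$. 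The paper avoids all of this by brute choice of the perturbation: $\varphi$ vanishes to order $3$ along $\ell(L)$, so $J_s$, the induced metric and the mean curvature along $\ell$ are literally independent of $s$, and $\ell$ stays HSLAG for all $s$ with no correction whatsoever. (Your side remark that the HSLAG-preserving directions form a subspace of finite codimension is also unjustified: the linearized constraint maps into the infinite-dimensional function space $C^{0,\eta}(L)$; what actually provides enough freedom is the ample supply of variations generated by Hamiltonians vanishing to high order along $\ell(L)$.)
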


\subsection{Some variational formulae}
First, we have to carry out some variational formulas. In order to do that we identify a neighborhood of $L$ with a neighborhood of the zero section in $T^*L$ and a set of coordinates $x_1,\dots, x_n$ on $L$ is completed on $T^*L$ to give Darboux coordinates $\omega= \sum_{i=1}^n dx_i\wedge dy_i$. In these coordinates, $dy_i$ vanishes on $TL$, so the volume form  $\vform_{J}$ on $L$ is $$\vform_{J} = \sqrt{|g_L|}dx_1\wedge \cdots \wedge dx_n $$  where $g_L = g_{ij} dx_i\otimes dx_j$ is the restriction of the metric $g$ on $L$, $|g_L|$ the determinant of $(g_{ij})$. The variation of $\Phi$ along a path $\{\mbox{id}\}\times J_t$ starting at $J$ is then given by \begin{equation}\label{varVol0} \frac{d}{dt}_{t=0}\vol_{J_t}(\mbox{id})= \int_L \mbox{tr}(g_L^{-1}\dot{g_L})\vform_{J} = \int_L \mbox{tr}(g_L^{-1}\dot{g})\vform_{J}.\end{equation}
More generally, for a given $u\in G$, $$ d_J\Phi (\dot{J})\,(u) = \frac{d}{dt}_{t=0}\vol_{J_t}(u)=  \int_{u(L)} {\tr} (g_{u(L)}^{-1}\dot{g})\vform_{J}.$$

For $J\in \cJ_{\omega}$, the tangent $T_J\cJ_{\omega}$ is a subspace of endormorphism of $TM$ and has a natural complex structure $\JJ$ given by  $$\JJ A = J\circ A$$ for $A\in T_J\cJ_{\omega}$. Denoting the orbit of $J$ by under $\mbox{Symp}_0$ by $\cG.J=\mbox{Symp}_0(\omega)\cdot J$, one can prove, see~\cite{PG}, that \begin{equation}\label{correspTANGENTJ} T_J\cJ_{\omega} = \{-\cL_Z J\,|\, Z\in \mathfrak{symp}(\omega) + J \mathfrak{ham}(\omega) \}\,  \simeq \,T_J\cG.J + \JJ T_J\cG.J.\end{equation}
The last equality uses $\cL_{JZ}J =J\cL_{Z}J$ which holds thanks to the integrability of $J$. Note that the right hand side of~\eqref{correspTANGENTJ} is not a direct sum in general.

Let $J_t$ be a path in $\cJ_{\omega}$ defined for small $t$ such that $g_t = \omega(J_t\cdot,\cdot)$ be the corresponding variation of Riemannian metrics, so that $g_0=g$. 

For $\phi\in C^{\infty}(M)$, let $X_\phi \in \mathfrak{ham}(\omega)$ be the corresponding Hamiltonian vector field that is $d\phi = -\omega(X_\phi,\cdot)$. We denote by $D$ the Levi-Civita connection and $d^c\phi = -d\phi\circ J$. Recall that for a $1$--form $\alpha \in \Omega^1(M)$, the Levi-Civita is defined as $D\alpha \,(\!X,Y\!) = X.\alpha(Y) - \alpha (D_XY)$ and the Hessian of $\phi$ is $Dd\phi$, a symmetric tensor since $D$ has no torsion. The $J$--invariant and anti-invariant parts of $D\alpha$ are $$D^{\pm}\!\alpha \,(\!X,Y\!) = \frac{1}{2}(D\alpha\,(\!X,Y\!) \pm D\alpha\,(\!JX,JY\!)).$$  
\begin{lemma}\label{varJ} Let $\phi\in C^{\infty}(M)$,
\begin{itemize}
\item[a)] if $\dot{J} = -\cL_{X_\phi} J$, then $\dot{g} = -2D^-d^c\phi$.
\item[b)] if $\dot{J} = -J\cL_{X_\phi} J$, then $\dot{g} = 2D^-d\phi$.
\end{itemize}
\end{lemma}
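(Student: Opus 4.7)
My plan is to reduce both identities to a single computation of $\mathcal{L}_{X_\phi} g$, and then use the Kähler identity $DJ=0$ to rewrite the symmetric part of $D d^c\phi$ as $D^- d^c\phi$.

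First, I would record two preliminary identities valid for any symplectic vector field $X$ on a Kähler manifold. Since $\mathcal{L}_X \omega = 0$ when $X$ is symplectic, a direct comparison of Lie derivatives of $g$ and $\omega$, combined with $\mathcal{L}_X J\cdot Y = [X,JY]-J[X,Y]$, yields
\begin{equation*}
\mathcal{L}_X g(Y,Z) = \omega(\mathcal{L}_X J\cdot Y,Z).
\end{equation*}
On the other hand, the standard formula $\mathcal{L}_X g = 2\,\mathrm{Sym}(DX^\flat)$ holds for the Levi-Civita connection. For $X=X_\phi$, the identification $X_\phi = J\nabla\phi$ gives $X_\phi^\flat = d^c\phi$, so
\begin{equation*}
\mathcal{L}_{X_\phi} g = 2\,\mathrm{Sym}(D d^c\phi).
\end{equation*}

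Next comes the key Kähler identity. Using $DJ=0$, a short computation gives $D d^c\phi(X,Y) = -Dd\phi(X,JY)$. Since the Hessian $Dd\phi$ is symmetric, this implies $Dd^c\phi(Y,X) = -Dd^c\phi(JX,JY)$, hence
\begin{equation*}
\mathrm{Sym}(Dd^c\phi) = D^- d^c\phi.
\end{equation*}

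For part (a), $\dot J = -\mathcal{L}_{X_\phi}J$ together with the first preliminary identity gives $\dot g(Y,Z) = -\omega(\mathcal{L}_{X_\phi}J\cdot Y,Z) = -\mathcal{L}_{X_\phi}g(Y,Z)$, and the conclusion $\dot g = -2 D^- d^c\phi$ follows at once.

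For part (b), differentiating $J^2=-\mathrm{id}$ gives $J\mathcal{L}_X J = -\mathcal{L}_X J\cdot J$. Therefore
\begin{equation*}
\dot g(Y,Z) = \omega(-J\mathcal{L}_{X_\phi}J\cdot Y,Z) = \omega(\mathcal{L}_{X_\phi}J\cdot JY,Z) = \mathcal{L}_{X_\phi}g(JY,Z).
\end{equation*}
Expanding via $\mathcal{L}_{X_\phi}g = 2\,\mathrm{Sym}(Dd^c\phi)$ and then applying $Dd^c\phi(X,Y) = -Dd\phi(X,JY)$ to both $Dd^c\phi(JY,Z)$ and $Dd^c\phi(Z,JY)$ yields
\begin{equation*}
\dot g(Y,Z) = Dd\phi(Y,Z) - Dd\phi(JY,JZ) = 2\,D^-d\phi(Y,Z),
\end{equation*}
which is exactly (b). The main technical obstacle is keeping signs consistent across the three conventions ($d^c = -d\circ J$, $\omega(X_\phi,\cdot)=-d\phi$, and the sign of $\mathcal{L}_X J$); once these are aligned, the computation is purely formal and the Kähler identity $DJ=0$ does all the geometric work.
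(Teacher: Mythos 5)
Your proof is correct in substance and follows essentially the same route as the paper: both reduce part b) to the identity $\dot g = \cL_{X_\phi}g(J\cdot,\cdot)$ and then expand using $DJ=0$ and the symmetry of the Hessian to land on $Dd\phi(Y,Z)-Dd\phi(JY,JZ)=2D^-d\phi(Y,Z)$. The one genuine difference is in part a): the paper invokes \cite[Lemma 1.20.2]{PG} as a black box for $\omega(\cL_{X_\phi}J\,\cdot,\cdot)=-2D^-(X_\phi^\flat)$, whereas you reprove it from scratch via the Killing-operator formula $\cL_{X_\phi}g=2\,\mathrm{Sym}(Dd^c\phi)$ together with the Kähler identity $\mathrm{Sym}(Dd^c\phi)=D^-d^c\phi$. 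This buys self-containedness and a cleaner organization, since the single computation of $\cL_{X_\phi}g$ then serves both parts; the paper's version is shorter but leans on the external reference for a) and redoes a Leibniz expansion of $g(D_{JY}X_\phi,Z)+g(JY,D_ZX_\phi)$ by hand for b).

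One caveat on signs, which you yourself flagged as the delicate point: with the paper's convention $g(v,w)=\omega(v,Jw)$, your two displayed intermediate identities are each off by a sign as literally stated. The correct versions place the endomorphisms in the \emph{second} slot: $\cL_X g(Y,Z)=\omega(Y,\cL_XJ\cdot Z)$ (a direct Leibniz computation using $\cL_X\omega=0$), and likewise $\dot g(Y,Z)=\omega(Y,\dot J\,Z)$, not $\omega(\dot J\,Y,Z)$. Because $\cL_XJ$ and $\dot J$ anticommute with $J$ and the resulting bilinear forms are symmetric, moving the endomorphism across $\omega$ flips the sign, so your two slips compensate exactly and every final formula — $\dot g=-\cL_{X_\phi}g=-2D^-d^c\phi$ in a), $\dot g(Y,Z)=\cL_{X_\phi}g(JY,Z)=2D^-d\phi(Y,Z)$ in b) — is correct. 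Still, you should fix the two intermediate identities so the argument is convention-consistent rather than right by cancellation.
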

\begin{proof} Since $\omega$ does not vary along the path $(\omega, g_t,J_t)$ and that $\cL_{X_\phi} \omega=0$, in the case a), we have, using \cite[Lemma 1.20.2]{PG}, that  
\begin{equation} \dot{g} =-\omega(\dot{J},) = \omega(\cL_{X_\phi} J(\cdot),\cdot) =-2 D^-(X_\phi^\flat) =-2 D^-d^c\phi
\end{equation} since $g(X_\phi,\cdot) =d^c\phi$. In case b), we have \begin{equation*} \begin{split}\dot{g} =-\omega(\dot{J},) = \omega(J\cL_{X_\phi} J(\cdot),\cdot) &=-g( \cL_{X_\phi}J\cdot,\cdot)\\
&=  -(\cL_{X_\phi} \omega) + \cL_{X_\phi}g(J\cdot,\cdot) = \cL_{X_\phi}g(J\cdot,\cdot). \end{split} 
\end{equation*}  Since $(g,J)$ is K\"ahler, $DJ=0$ and so, for $Z,Y \in\Gamma(TM)$, we have
\begin{equation}\begin{split}
\dot{g}(Y,Z) &= \cL_{X_\phi}g(JY,Z) = g(D_{JY}X_\phi,Z) + g(JY,D_{Z}X_\phi) \\
&= (JY).g(X_\phi,Z) - g(X_\phi, D_{JY}Z) + (Z).g(JY,X_\phi) - g(D_{Z}JY,X_\phi) \\
&= (JY).d^c\phi (Z) - d^c\phi(D_{JY}Z) + Z.d^c\phi (JY) - d^c\phi(D_{Z}(JY))\\
&= -(JY).d\phi (JZ) + d\phi(D_{JY}JZ) + Z.d\phi (Y) - d\phi(D_{Z}(Y))\\
&= -Dd\phi \,(\!JY,JZ\!) + Dd\phi \,(\!Z,Y\!)\\
&= -Dd\phi \,(\!JY,JZ\!) + Dd\phi \,(\!Y,Z\!) = 2D^-d\phi(Y,Z)
\end{split}
\end{equation}\end{proof}

Consequently, taking a variation $\dot{J} = -\cL_{X_\phi} J - J\cL_{X_\psi} J$ we have  
\begin{equation}\label{imagedPHI}
 d_J\Phi(\dot{J})(u) =-2\int_{u(L)} {\tr} \left(g_{u(L)}^{-1}(D^-d^c\phi - D^-d\psi)\right) \vform_J.
 \end{equation} 
~\\

 \begin{lemma}\label{varJ} Let $\phi\in C^{\infty}(M)$ and $X, Y\in T_pM$ then
\begin{itemize}
\item[a)] $D^-d^c\phi(X,Y) = -D^-d\phi(JX,Y)$,
\item[b)] $2D^+d\phi(X,Y) =dd^c \phi (X,JY)$,
\item[c)] $2(D^-d\phi(JX,X) + D^-d\phi(X,X))=dd^c \phi (X,JX)$.
\end{itemize}
\end{lemma}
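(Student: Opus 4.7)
My plan is to derive all three identities as straightforward consequences of three facts available on the Kähler manifold $(M,\omega,J)$: the parallelism $DJ=0$, the symmetry of the Hessian $Dd\phi$ (which follows from $d^2\phi=0$ and the fact that $D$ is torsion-free), and $J^2=-\mathrm{id}$. The key intermediate computation, which I would carry out first, is the formula
\[
Dd^c\phi(X,Y)=-Dd\phi(X,JY).
\]
Indeed, since $d^c\phi(Y)=-d\phi(JY)$ and $DJ=0$, we have
\[
Dd^c\phi(X,Y)=X\!\cdot\!d^c\phi(Y)-d^c\phi(D_XY)=-X\!\cdot\!d\phi(JY)+d\phi(JD_XY)=-Dd\phi(X,JY).
\]

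With this in hand, part (a) is pure bookkeeping: I substitute into the definition
\[
D^-d^c\phi(X,Y)=\tfrac{1}{2}\bigl(Dd^c\phi(X,Y)-Dd^c\phi(JX,JY)\bigr),
\]
apply the intermediate formula to both terms, and use $J^2=-\mathrm{id}$ to obtain $-\tfrac{1}{2}\bigl(Dd\phi(X,JY)+Dd\phi(JX,Y)\bigr)$, which is exactly $-D^-d\phi(JX,Y)$ after another use of the definition.

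For part (b), I would use that $D$ is torsion-free, so that the exterior derivative is the antisymmetrization of the covariant derivative:
\[
dd^c\phi(X,Y)=Dd^c\phi(X,Y)-Dd^c\phi(Y,X)=-Dd\phi(X,JY)+Dd\phi(Y,JX).
\]
Substituting $Y\mapsto JY$ and using $J^2=-\mathrm{id}$ together with the symmetry of the Hessian gives
\[
dd^c\phi(X,JY)=Dd\phi(X,Y)+Dd\phi(JX,JY)=2D^+d\phi(X,Y).
\]

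For part (c), my approach would be to specialize (b) to $Y=X$, obtaining $dd^c\phi(X,JX)=2D^+d\phi(X,X)$, and then rewrite $D^+d\phi(X,X)$ as the combination $D^-d\phi(JX,X)+D^-d\phi(X,X)$ using part (a) and the symmetry of $Dd\phi$ (which in particular forces $D^-d\phi(JX,X)=Dd\phi(X,JX)$). The only real obstacle across the three parts is keeping track of signs and of which bilinear forms are symmetric versus antisymmetric under $J$; there is no deep geometric content beyond the parallelism of $J$ and the exactness of $d\phi$.
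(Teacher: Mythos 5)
Your parts a) and b) are correct, and in substance they follow the same route as the paper: the paper expands $Dd^c\phi$ and $dd^c\phi$ term by term from the definitions using $DJ=0$, torsion-freeness and the symmetry of $Dd\phi$, which is exactly the content of your intermediate identity $Dd^c\phi(X,Y)=-Dd\phi(X,JY)$. Packaging the computation in that single formula is a mild but genuine streamlining; your derivations of a) and b) from it are sound.

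Part c) is where the proposal breaks down, and the failure is concrete. The rewriting you invoke,
$$
D^+d\phi(X,X)=D^-d\phi(JX,X)+D^-d\phi(X,X),
$$
is false. You correctly observe $D^-d\phi(JX,X)=Dd\phi(X,JX)$, and from the definitions $D^-d\phi(X,X)=\tfrac12\bigl(Dd\phi(X,X)-Dd\phi(JX,JX)\bigr)$ while $D^+d\phi(X,X)=\tfrac12\bigl(Dd\phi(X,X)+Dd\phi(JX,JX)\bigr)$; so your claimed identity is equivalent to $Dd\phi(X,JX)=Dd\phi(JX,JX)$, which neither part a) nor the symmetry of the Hessian implies. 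On flat $\CC$ (or a flat torus) with $J\del_x=\del_y$, $\phi=y^2$ and $X=\del_x$, one gets $D^+d\phi(X,X)=1$ but $D^-d\phi(JX,X)+D^-d\phi(X,X)=0-1=-1$.

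You could not have succeeded here, because identity c) is itself false as stated: in the same example the left-hand side of c) is $-2$ while $dd^c\phi(X,JX)=2$. Indeed the left-hand side of c) equals $2Dd\phi(X,JX)+Dd\phi(X,X)-Dd\phi(JX,JX)$, whereas b) at $Y=X$ — the specialization you rightly start from — gives $dd^c\phi(X,JX)=Dd\phi(X,X)+Dd\phi(JX,JX)$; these agree only when $Dd\phi(X,JX)=Dd\phi(JX,JX)$. The paper's own proof commits a parallel slip at the step asserting $Dd\phi(X,X+JX)+Dd\phi(JX,X-JX)=2D^+d\phi(X,X-JX)$: expanding the right-hand side yields $Dd\phi(X,X-JX)+Dd\phi(JX,X+JX)$, in which the two cross terms carry the opposite signs. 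So your error occurs at essentially the same spot as the authors', forced by a false target; the honest conclusion of your strategy is the (correct) identity $dd^c\phi(X,JX)=2D^+d\phi(X,X)$, and any downstream use of c) would need its statement, and the corresponding sign bookkeeping, repaired accordingly.
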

\begin{proof}
 For a), recall that $d^c\phi =-d\phi\circ J$. We use that $DJ=0$ as follow 
 \begin{equation}\begin{split}
 2D^-d^c\phi(X,Y) &= Dd^c\phi(X,Y)-Dd^c\phi(JX,JY)\\
 & = X.d^c\phi(Y)- d^c\phi(D_XY) - (JX).d^c\phi(JY) +d^c\phi(D_{JX}JY)\\
 & =-X.d\phi(JY)+ d\phi(D_XJY) - (JX).d\phi(Y) +d\phi(D_{JX}Y)\\
 &= -Dd\phi(X,JY)-Dd\phi(JX,Y)\\
  &= Dd\phi(J^2X,JY)-Dd\phi(JX,Y)\\
  & =-2D^-d\phi(JX,Y)
 \end{split}
\end{equation}
 For b), note that $Dd\phi$ is the Hessian, thus symmetric, hence 
  \begin{equation}\begin{split}
 2D^+d^c\phi(X,Y) &= Dd\phi(X,Y) +Dd\phi(JX,JY)\\
 & = X.d\phi(Y)- d\phi(D_XY) + (JY).d\phi(JX) -d\phi(D_{JY}JX).
 \end{split}
\end{equation} Using that $D$ has no torsion, we see it coincides with 
 \begin{equation}\begin{split}
 dd^c\phi(X,JY) &= X.d^c\phi (JY) -(JY).d^c\phi(X) -d^c\phi(D_XJY - D_{JY}X)\\
 & = X.d\phi(Y) + (JY).d\phi(JX)- d\phi(D_XY) -d\phi(D_{JY}JX).
 \end{split}
\end{equation}
For c), using again that $Dd\phi$ is symmetric we have
 \begin{equation}\begin{split}
 2(D^-d\phi(JX,X) + D^-d\phi(X,X))&= Dd\phi(JX,X) + Dd\phi(X,JX) \\
 & \qquad\qquad\;\; + Dd\phi(X,X)-Dd\phi(JX,JX) \\
 & = Dd\phi(X,X+JX) + Dd\phi(JX,X- JX)\\
 & = 2D^+d\phi(X,X-JX)\\
 \end{split}
\end{equation} Now, using formula b) just above, we have $2D^+d\phi(X,X-JX) = dd^c\phi(X, J(X-JX)) = dd^c\phi(X, JX) + dd^c\phi(X,X)= dd^c\phi(X, JX)$. \end{proof}

\subsection{Particular variations}
Let $\varphi$ be a real smooth function on $M$. We consider a family of almost complex structures  $J_s\in
\cJ_\omega$, defined for $s\in \RR$ sufficiently close to zero and
such that
\begin{equation}\label{varJalongVARPHI}
\frac {\del}{\del s}J_s = -\cL_{X_\varphi} J_s + J_s\cL_{X_\varphi} J_s
\end{equation}
for all $s$ sufficiently small,
where $X_\varphi$ is the Hamiltonian vector field deduced from~$\varphi$.

We have the following variation formula for the volume:
\begin{prop}\label{propFORMULAvarVOL} Let $\varphi_s$ be a smooth family of real smooth functions on $M$ for small $s\in \RR$ and consider the corresponding 
 of $J_s\in \cJ_\omega$ satisfying \eqref{varJalongVARPHI}. Then
$$
\frac {\del}{\del s}\vol_{J_s}(u) =\int_L \left ((\Delta^{g_s}\varphi) \circ u
\circ \ell\right )\; \vol((u\circ \ell)^*g_{J_s}).
$$
\end{prop}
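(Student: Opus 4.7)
The plan is to feed the specific variation~\eqref{varJalongVARPHI} into the general formula~\eqref{imagedPHI} and then reduce the resulting trace on $u(L)$ to the ambient Laplacian of $\varphi$, exploiting the Lagrangian geometry of $u(L)$.

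First, the vector field in~\eqref{varJalongVARPHI} fits the framework of the Lemma preceding~\eqref{imagedPHI} with $(\phi,\psi)=(\varphi_s,-\varphi_s)$ in the decomposition $\dot J = -\mathcal L_{X_\phi}J - J\mathcal L_{X_\psi}J$, giving at once
\[
\dot g_{J_s} \;=\; -2\bigl(D^-d^c\varphi_s + D^-d\varphi_s\bigr).
\]
Inserting this into~\eqref{imagedPHI} and doing the change of variables $y=u(\ell(x))$ reduces the claim to the integral identity
\[
-2\int_{u(L)}\tr_{u(L)}\bigl(D^-d^c\varphi_s + D^-d\varphi_s\bigr)\,\vform_{J_s}
\;=\; \int_{u(L)}\Delta^{g_s}\varphi_s\,\vform_{J_s}.
\]

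Next I compute the integrand pointwise on the Lagrangian $u(L)$. Choosing an orthonormal frame $\{e_i\}$ of $T_pu(L)$, the Lagrangian condition makes $\{Je_i\}$ orthonormal in $T_pu(L)^{\perp}$, so $\{e_i,Je_i\}$ is an orthonormal basis of $T_pM$. Using $Dd^c\varphi=-Dd\varphi(\,\cdot\,,J\cdot)$, which comes directly from $DJ=0$, together with the symmetry of the Hessian, one obtains the elementary expansion
\[
(D^-d^c\varphi + D^-d\varphi)(e_i,e_i) \;=\; -Dd\varphi(e_i,Je_i) + \tfrac{1}{2}\bigl(Dd\varphi(e_i,e_i) - Dd\varphi(Je_i,Je_i)\bigr),
\]
and summing over $i$, with $\sum_\alpha Dd\varphi(E_\alpha,E_\alpha) = \Delta^{g_s}\varphi$ in the paper's convention, rewrites the integrand as $\Delta^{g_s}\varphi - 2\tr_{u(L)}(Dd\varphi) + 2\sum_i Dd\varphi(e_i,Je_i)$.

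The final step is to show that the two extra terms cancel after integration over the closed Lagrangian $u(L)$. The Gauss formula expresses $\tr_{u(L)}(Dd\varphi) = \Delta_{u(L)}(\varphi\circ u\circ\ell) - d\varphi(\vec H)$, and the intrinsic Laplacian integrates to zero on the closed manifold. An analogous computation gives $\sum_i Dd\varphi(e_i,Je_i) = -\tr_{u(L)}(Dd^c\varphi)$, whose integral likewise reduces via Gauss to $\int d^c\varphi(\vec H)\,\vol$. Putting the pieces together, the surviving correction is $2\int_{u(L)}(d\varphi+d^c\varphi)(\vec H)\,\vol$, which should vanish thanks to the Lagrangian identity $\alpha_{\vec H}^\sharp = -J\vec H$ and the closedness of $u(L)$ after integration by parts against $\varphi$. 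Carefully verifying this cancellation, and tracking the sign and $\tfrac{1}{2}$-factor conventions between analyst's vs.\ geometer's Laplacian and between~\eqref{imagedPHI} and the standard first variation of volume, is the main technical obstacle in the argument.
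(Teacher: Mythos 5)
Your opening moves are right, and they coincide with the paper's: the identification $(\phi,\psi)=(\varphi_s,-\varphi_s)$, the formula $\dot g=-2(D^-d^c\varphi_s+D^-d\varphi_s)$, and the reduction to the trace integral of \eqref{imagedPHI} are exactly how the paper begins. But from there the two arguments diverge sharply. The paper's proof is \emph{purely pointwise}: invoking items a) and c) of the second Lemma~\ref{varJ}, it asserts that $\tr\bigl(g_{u(L)}^{-1}(D^-d^c\varphi+D^-d\varphi)\bigr)$ equals the symplectic trace $\sum_i dd^c\varphi(v_i,Jv_i)$, which on a Lagrangian (where $\{v_i,Jv_i\}$ is a full orthonormal frame of $T_pM$) is the ambient Laplacian up to sign; no Gauss formula, no mean curvature, no integration by parts appears anywhere. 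Your expansion, by contrast, honestly retains the cross term $\sum_i Dd\varphi(e_i,Je_i)$ and the tangential trace, and this is not a defect of your bookkeeping: identity c) of Lemma~\ref{varJ} as printed silently discards precisely the term $Dd\varphi(JX,X)$ (test $\phi=xy$ in flat $\CC$ with $X=\del_x$: the left side of c) equals $2$, the right side equals $0$). So your extra terms are genuinely present pointwise, and you and the paper cannot both be closing the argument correctly.

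The genuine gap is your final cancellation, and it cannot be repaired in the stated generality. After your (correct) Gauss-formula reductions, the correction is $2\int_{u(L)}(d\varphi+d^c\varphi)(H)\,\vform$. The piece $\int d^c\varphi(H)=-\int d\varphi(JH)=-\int(\varphi\circ\iota)\,d^*\alpha_H\,\vform$ (up to sign) vanishes only when $u(L)$ satisfies $d^*\alpha_H=0$ \emph{for the metric $g_{J_s}$}, i.e. only when $u(L)$ is already Hamiltonian stationary — not assumed for arbitrary $u\in G$ and $s>0$. The piece $\int d\varphi(H)\,\vform$ is worse: it pairs the mean curvature with the normal part of $\nabla\varphi$, is exactly the first variation of $\vol(u(L))$ under the gradient flow of $\varphi$, and admits no integration by parts since $H$ is normal; generically it is nonzero. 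Concretely: in the flat model with $u(L)$ the unit circle and $\varphi=|z|^2/2$ near it, $X_\varphi$ is the rotation field, so $\dot J=0$ and the left-hand side of the Proposition vanishes, while $\int_{u(L)}\Delta^g\varphi\neq0$; here $d\varphi(H)=-1$ and $d\varphi(JH)=0$, so your correction term equals exactly minus the principal term. This confirms both that your expansion is right and that the hoped-for cancellation is false — indeed it shows the Proposition's unqualified formula itself requires the mean-curvature correction. (In the paper's actual use, Theorem~\ref{prop:evelinedeg}, $\varphi$ vanishes to order at least $3$ along $\ell(L)$, so $d\varphi$ — hence your correction — is of higher order along the nearby orbits $u_t(\ell(L))$ and the Hessian-positivity conclusion survives; but as a proof of the Proposition for all $u$, the last step of your route fails, and no integration-by-parts identity will close it.)
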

\begin{proof} 
Using the first and last formulas of Lemma~\ref{varJ}, we get that 
 \begin{equation}\label{1stVARvolGJ} d_J\Phi(\dot{J})(u) = -2\int_{u(L)} {\tr} (g_{u(L)}^{-1} dd^c\phi(\cdot,J\cdot)) \vform_J = -2 \int_{u(L)} \Delta^g\phi \; \vform_J \end{equation} where $ \Delta^g\phi$ is the Laplacian of $\phi$ on $M$ with respect to the K\"ahler metric $g$. Indeed, taking $\{v_i\}_{i=1}^n$ an orthonormal basis of $T_pL$, the trace $${\tr} (g_{u(L)}^{-1} dd^c\phi(\cdot,J\cdot)) = \sum_{i=1}^n dd^c\phi(v_i,Jv_i)$$ is the symplectic trace of $dd^c\phi$ which turns out to be the Laplacian of $\phi$ at $p$ up to a sign, see for e.g.~\cite[p.33]{PG}. Observe that to obtain \eqref{1stVARvolGJ} we didn't use the fact that $G$ fixes $J$ so the formula holds at any point of $\cJ_{\omega}$. 
\end{proof}

The variation of $J_s$ depends on first derivatives of $\phi$. If
$\phi$ vanishes upto order $1$ along the image of $\ell:L\to M$, the
almost complex structures $J_s$ are independent of $s$ along $\ell$ as
well. The metric $g_s$ deduced from $\omega$ and $g_s$ must also be
independent of $s$ along $\ell$. Is $\varphi$ vanishes to a higher
order, say upto order $2$ along $\ell$, the mean curvature along
$\ell$ will also be independent of $s$. Since the metric, the mean
curvature are independent of $s$ we have the following lemma
\begin{lemma}
  Let $\varphi:M\to \RR$ be a smooth function vanishing upto order $2$
  along the image of $\ell:L\to M$, and $J_s$ be a family of almost
  complex structures defined from $\varphi$ as above.
Then $\ell: L\to M$ is HSLAG with respect to $J_s$ for all $s$
sufficiently small.
\end{lemma}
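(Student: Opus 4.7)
The strategy is to show that under the hypothesis $\varphi$ vanishes to order $2$ along $\ell(L)$, the $1$-jet of $J_s$ along $\ell(L)$ is independent of $s$. This will preserve the induced metric $\ell^*g_{J_s}$ on $L$ (hence $d^*$ on forms of $L$) and the mean curvature vector $H^{J_s}$, so the HSLAG equation $d^*\alpha_{H^{J_s}} = 0$ persists from $s=0$.

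I interpret ``vanishing to order $2$'' as $\varphi$, $d\varphi$, and $Dd\varphi$ all vanishing along $\ell(L)$. Since $X_\varphi = -\omega^{-1}(d\varphi)$, this yields $X_\varphi|_L = 0$; and using the Kähler identity $\nabla\omega = 0$, one has $\nabla X_\varphi = -\omega^{-1}\circ Dd\varphi$, which also vanishes on $\ell(L)$. The pointwise identity $(\mathcal{L}_X J)|_p = [J_p,(\nabla X)|_p]$, valid whenever $X_p = 0$, then gives $\mathcal{L}_{X_\varphi}J_s|_L = 0$ for every $s$. The defining ODE~\eqref{varJalongVARPHI} therefore has right-hand side vanishing along $\ell(L)$, so $J_s|_L = J_0|_L$ for all small $s$. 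In particular the induced metric $\ell^*g_{J_s}$ on $L$, the volume form, and the operator $d^*$ on forms of $L$ are independent of $s$.

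For the mean curvature, I would use the Hamiltonian flow $\phi_s$ of $X_\varphi$: since $X_\varphi|_L = 0$ and $\nabla X_\varphi|_L = 0$, the flow fixes $\ell(L)$ pointwise with trivial differential, because $d\phi_s|_p = \exp(s\,\nabla X_\varphi|_p) = I$ at any $p\in\ell(L)$. Thus $\phi_s$ acts trivially on the $1$-jet of $M$ along $\ell(L)$, and being a symplectomorphism, it realizes $(M,\omega,\phi_s^*J_0)\to(M,\omega,J_0)$ as a Kähler isomorphism fixing $\ell$ to first order. Hence the mean curvature of $\ell$ computed in $\phi_s^*J_0$ equals that in $J_0$, and so does the HSLAG quantity $d^*\alpha_H$. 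Comparing the two driven structures $J_s$ (satisfying our ODE) and $\widetilde{J}_s := \phi_s^*J_0$ (satisfying $\partial_s\widetilde J_s = \mathcal{L}_{X_\varphi}\widetilde J_s$), both of whose right-hand sides vanish to sufficient order along $\ell(L)$ thanks to the order-$2$ hypothesis, one concludes that $J_s$ matches $J_0$ to the order required for the mean curvature — and hence the HSLAG equation — to be preserved.

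\textbf{Main obstacle.} The delicate point is the precise order-count: one must check that $J_s - J_0$ vanishes to order $2$ transverse to $\ell(L)$, so that the first derivatives of $g_{J_s}$ at $L$ (which govern the Christoffel symbols, second fundamental form, and hence $H$) are unchanged. This requires estimating $\mathcal{L}_{X_\varphi}J_s$ and its first normal derivative along $\ell(L)$, which in turn involves up to third derivatives of $\varphi$. The hypothesis ``order $2$'' is likely used most sharply here, either through additional cancellations built into the specific combination $-\mathcal{L}_{X_\varphi}J_s + J_s\mathcal{L}_{X_\varphi}J_s$ (whose corresponding metric variation, by Lemma~\ref{varJ}, is $\dot g = -2D^-(d\varphi + d^c\varphi)$, a pure Hessian object that vanishes to one higher order along $L$), or by exploiting the Kähler parallelism to convert derivatives of $X_\varphi$ into Hessians of $\varphi$.
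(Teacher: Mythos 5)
Your first step is correct and is in fact more careful than the paper's own justification (the paper only offers the jet-counting paragraph preceding the lemma): with the $2$-jet of $\varphi$ vanishing along $\ell(L)$ you get $X_\varphi|_L=0$ and $\nabla X_\varphi|_L=0$, hence $\mathcal{L}_{X_\varphi}J_s|_L=0$, so the $0$-jet of $J_s$, the induced metric $\ell^*g_{J_s}$, and $d^*$ on $L$ are all frozen. But the second half of your argument has a genuine gap, exactly at the point you flag as the ``main obstacle'': to freeze the mean curvature you need the \emph{$1$-jet} of $J_s$ (equivalently of $g_{J_s}$) along $\ell(L)$ to be $s$-independent, i.e. $J_s-J_0=O(r^2)$ transversally. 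Differentiating the right-hand side of \eqref{varJalongVARPHI} in a normal direction produces $\nabla^2 X_\varphi$, i.e. \emph{third} derivatives of $\varphi$, which the order-$2$ hypothesis does not kill. Your hoped-for cancellation in the combination $-\mathcal{L}_{X_\varphi}J_s+J_s\mathcal{L}_{X_\varphi}J_s$ does not occur: in the flat model $\CC^n$, $L=\{y=0\}$, $\varphi=a(x)\,y_1^3$ (which vanishes to order $2$ along $L$), the variation of the metric is a Hessian expression of order $O(r)$, its normal derivative along $L$ is nonzero, and a direct computation gives $g(\dot H,\partial_{y_1})=c\,a(x)\neq 0$ and $d^*\alpha_{\dot H}=c'\,\partial_{x_1}a\neq 0$ for suitable $a$. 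One can see the same obstruction invariantly: by Proposition~\ref{propFORMULAvarVOL}, if $\ell$ stayed HSLAG for all small $s$ then for every Hamiltonian function $v$ one would need $\int_L d(\Delta^{g_0}\varphi)(X_v)\,\mathrm{dv}=0$, and $d(\Delta^{g_0}\varphi)|_L$ is exactly a $3$-jet quantity that is nonzero for the example above. So the lemma as stated (order $2$) cannot be proved by your route --- nor by any route; the correct hypothesis is vanishing up to order $3$.

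Two further remarks. First, this off-by-one is inherited from the paper: its preceding paragraph asserts that the variation of $J_s$ depends on \emph{first} derivatives of $\varphi$, whereas $\mathcal{L}_{X_\varphi}J$ at a point depends on the $1$-jet of $X_\varphi$, hence on the $2$-jet of $\varphi$; shifting every count by one shows that freezing the mean curvature requires the $3$-jet of $\varphi$ to vanish. This is harmless for the application, since in the proof of Theorem~\ref{prop:evelinedeg} the function used is a fourth power of the distance, which does vanish up to order $3$ along $\ell$ (the paper even uses there that $\Delta^{g_0}\varphi$ vanishes to order $1$ along $\ell$, which is false under a mere order-$2$ hypothesis). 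Under the order-$3$ hypothesis your own $0$-jet argument upgrades verbatim to the $1$-jet, and the proof closes cleanly. Second, a technical slip in your flow comparison: the generator of \eqref{varJalongVARPHI} is not $X_\varphi$ but, using the integrability identity $J\mathcal{L}_{X}J=\mathcal{L}_{JX}J$, the field $Z_s=X_\varphi-J_sX_\varphi$, whose flow is \emph{not} symplectic (so $g_{J_s}\neq \phi_s^*g_{J_0}$ and the ``Kähler isomorphism fixing $\ell$ to first order'' step is not available as stated); this does not affect the jet counting, since $Z_s$ vanishes along $L$ to the same order as $X_\varphi$, but it does invalidate the specific comparison with $\widetilde J_s=\phi_s^*J_0$ as you set it up.
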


For $s=0$, since $u$ acts by isometry on $g_{J_0}$ we
deduce from Proposition~\ref{propFORMULAvarVOL} that
$$
\frac {\del}{\del s}\vol_{J_s}(u)|_{s=0} =\int_L \left ((\Delta^{g_0}\varphi) \circ u
\circ \ell\right )\; \vol(g_L).
$$

\begin{lemma}
  Let $(N^k,g)$ be a smooth compact Riemannian manifold of dimension $k$ and $p\in N$. Let $r:N\to \RR$ be the Riemannian distance to
  $p$ in $N$. The function $r^4$ is a smooth function in the neigborhood of $p\in N$ with the
  property that
$$
\Delta (r^4) = -4r^2(k+2) + {\bf o}(r^3),
$$
where $\Delta$ is the Laplacian associated to the metric $g$.
\end{lemma}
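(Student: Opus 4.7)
The plan is to work in Riemannian normal coordinates centered at $p$. In such coordinates, the squared distance function satisfies $r^2 = \sum_i (x^i)^2$, so $r^4 = \bigl(\sum_i (x^i)^2\bigr)^2$ is manifestly smooth in a neighborhood of $p$ even though $r$ itself is only Lipschitz. This disposes of the regularity claim.

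For the Laplacian computation, I would use the standard expansion in normal coordinates
\[
g_{ij}(x) = \delta_{ij} + O(r^2), \qquad g^{ij}(x) = \delta^{ij} + O(r^2), \qquad \sqrt{|g|}(x) = 1 + O(r^2),
\]
(the precise coefficients involve the curvature tensor at $p$, but only the $O(r^2)$ order matters here). Using the sign convention of the paper where $\Delta = -\tfrac{1}{\sqrt{|g|}}\partial_i(\sqrt{|g|} g^{ij}\partial_j)$ (consistent with the positivity of $\Delta^2$ in \eqref{eq:box}), I would compute as follows. First, $\partial_j(r^4) = 4r^2 x_j$, so
\[
\sqrt{|g|}\,g^{ij}\partial_j(r^4) = 4r^2\bigl(x^i + O(r^3)\bigr)\bigl(1 + O(r^2)\bigr) = 4r^2 x^i + O(r^5).
\]
Differentiating, $\partial_i(4r^2 x^i) = 8(x^i)^2 + 4r^2$ for each $i$ (no sum), and summing over $i$ gives $8r^2 + 4kr^2 = 4(k+2)r^2$. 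The error terms contribute $O(r^4)$. Dividing by $\sqrt{|g|} = 1 + O(r^2)$ preserves the leading order, so
\[
-\Delta(r^4) = 4(k+2)r^2 + O(r^4),
\]
which yields $\Delta(r^4) = -4r^2(k+2) + O(r^4) = -4r^2(k+2) + \mathbf{o}(r^3)$.

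There is no real obstacle; the only point requiring care is keeping track of the sign convention for $\Delta$ used throughout the paper and ensuring the curvature corrections to the metric are genuinely $O(r^2)$, which is standard in normal coordinates. The computation reduces, at leading order, to the flat Euclidean identity $\Delta_{\mathrm{euc}}(|x|^4) = 4(k+2)|x|^2$.
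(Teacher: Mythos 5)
Your proof is correct, but it follows a different route from the paper's. The paper does not expand the coordinate formula for the Laplace--Beltrami operator at all: it invokes the Leibniz identity $\Delta(f^2) = 2f\Delta f - 2g(\nabla f,\nabla f)$ with $f = r^2$, writing $\Delta r^4 = 2r^2\bigl(\Delta r^2 - 4g(\nabla r,\nabla r)\bigr)$, and then feeds in the classical asymptotics $|\nabla r|^2 \to 1$ and $\Delta r^2 \to -2k$ as $r \to 0$ (which themselves come from the normal-coordinate expansion you quote). Your bare-hands computation with $\Delta = -\tfrac{1}{\sqrt{|g|}}\partial_i(\sqrt{|g|}\,g^{ij}\partial_j)$ and $r^2 = \sum_i (x^i)^2$ in normal coordinates is a self-contained substitute: it makes the regularity claim concrete (via $r^4 = |x|^4$ in these coordinates), it correctly keeps the paper's sign convention $\Delta r^2 \to -2k$, and it even yields the sharper error $O(r^4)$ rather than just $\mathbf{o}(r^3)$ --- the same order the paper's argument implicitly gives, since $\Delta r^2 = -2k + O(r^2)$. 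What the paper's approach buys is brevity and the avoidance of any bookkeeping of $\sqrt{|g|}$ and $g^{ij}$ expansions; what yours buys is that every error term is tracked explicitly from the standard $g_{ij} = \delta_{ij} + O(r^2)$ expansion, with no appeal to the asymptotics of $\Delta r^2$ as a known fact. One cosmetic remark: within the injectivity radius $r$ is smooth away from $p$ (it fails to be differentiable only at $p$ itself), so ``only Lipschitz'' slightly undersells its regularity, though nothing in your argument depends on this.
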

\begin{proof} It is well-known that $r^2$ is smooth, so does $r^4$. Taking normal coordinates $(r,\theta)$ centered at $p$ we know the metric tend up to order $1$ to the Euclidean metric at $p$. Then $|\nabla r|^2 \ra 1$ and $\Delta r^2 \ra -2 k$ when $r \ra 0$. It suffices to use the formula $\Delta r^4 = 2r^2\Delta r^2 -2g(\nabla r^2, \nabla r^2) = 2r^2(\Delta r^2 - 4g(\nabla r, \nabla r))$ to get the result.  
\end{proof}

All the ingredients above can be used to give a proof of the
main result of this section.
\begin{proof}[Proof of Theorem~\ref{prop:evelinedeg}]
Returning to our setup, still identifying a neigborhood of $\ell(L)$ in $M$ with a neigborhood $\cV$ of the $0$--section in $T^*L$ and considering the K\"ahler metric induced by $g$ on $\cV$. Denote the natural projection $T^*L \rightarrow L$.  We consider the function $\varphi :\cV\ra \RR$ defined by $\varphi(\alpha)= \frac{1}{-4(n+2)}d^g(\pi(\alpha),\alpha)^4$ where $d^g$ is the distance induced by $g$. Then, when restricted on a fiber $T_p^*L\cap \cV$, $\varphi$ is the $4$-th power of the distance function to $p$ up to a constant factor and satisfies the last lemma. If $(r_p,\theta_p)$ are normal coordinates of $T_p^*L\cap \cV$ centered at $p$ then $\varphi(p,r_p,\theta_p) = \frac{r_p^4}{-4(n+2)}$. The lemma above (see the proof to be convinced it works as well) gives then $$\Delta^g\varphi= r_p^2+\cO(r_p^3).$$
Let $u_t\in G$  be a one parameter subgroup of $G$ such that
$u_0=\id$. Since $\varphi$ vanishes upto order $3$ along $\ell:L\to M$
we have $\Delta^{g_0}\varphi =0$ upto order $1$ along $\ell$, where
$g_0$ is the Riemannian metric deduced from $J_0$ and $\omega$. Therefore 
$$\frac\del{\del t }\left ((\Delta^{g_0}\varphi)\circ u_t\circ
\ell\right )|_{t=0} =0$$
on $L$. In particular
$$\frac {\del}{\del t}\frac {\del}{\del
  s}\vol_{J_s}(u_t)|_{(s,t)=0}=0.$$

The second order $t$-derivative is given by
$$
\frac {\del^2}{\del t^2}\frac {\del}{\del
  s}\vol_{J_s}(u_t)|_{(s,t)=0} = \frac 12 \int_L Q(X,X) \vol(g_L)
$$
where $Q$ is the Hessian quadratic form of $\Delta^{g_0}\varphi$ and $X$ is the
Hamiltonian vector field tangent to $u_t$ at $t=0$ along $\ell$.
If $\Delta^{g_0}\varphi =r_p^2+\cO(r_p^3)$, we deduce that $Q$ is definite
positive in directions transverse to $\ell$. If $u_t$ is transverse to
$G_\ell^0$, then $X$ is not everywhere tangent to $\ell$. Hence we have proved that
$$\frac {\del^2}{\del t^2}\frac {\del}{\del s}\vol_{J_s}(u_t)|_{(s,t)=0}>0$$
unless $u_t$ is tangent to $G_\ell^o$ at $t=0$.

We deduce that if $u_t$ is transverse to $G_\ell^0$ at $t=0$
$$
\vol_{J_s}(u_t) = c + b_st^2 + o(t^2)
$$
where the constant $b_s$ is strictly positive for $s>0$. This
completes the proof of Theorem~\ref{prop:evelinedeg}.
\end{proof}

\subsection{Other properties of the volume functional}
Our variationnal formulas can be used to show that the standard volume
functional has Morse properties for generic almost complex
structures. Unfortunately we are interested in the modified volume
functional~\eqref{eq:volfun} in this paper, and this is why we relied on
Theorem~\ref{prop:evelinedeg} instead. For the interested reader, we
state the following result, which is in the spirit of the proof of existence 
of Morse function in the finite dimensional setting, although
it is not used in the rest of the paper:
\begin{prop}\label{propMORSE} Given a compact K\"ahler manifold
$(M,\omega, J_0)$ and a compact Lagrangian $\ell: L\hookrightarrow
M$. The map $\Phi :\cJ_\omega \longrightarrow
C^{\infty}(\quot{G}{G_\ell})$, defined
as \begin{equation}\label{mapPHI} \Phi(J):= \vol_J \end{equation} is
a submersion in a neigborhood of $J_0$.    
\end{prop}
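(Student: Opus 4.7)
My plan is to prove that the differential $d_{J_0}\Phi\colon T_{J_0}\cJ_\omega\to C^\infty(G/G_\ell)$ is surjective; the claim that $\Phi$ is a submersion in a neighborhood of $J_0$ then follows from the openness of surjectivity for continuous linear maps between the relevant Fréchet spaces. Using \eqref{correspTANGENTJ}, any tangent vector $\dot J\in T_{J_0}\cJ_\omega$ can be written as $\dot J=-\cL_{X_\phi}J_0-J_0\cL_{X_\psi}J_0$ with $\phi,\psi\in C^\infty(M)$, and the variational formulas established in Lemma~\ref{varJ}, formula~\eqref{imagedPHI} and Proposition~\ref{propFORMULAvarVOL} compute $d_{J_0}\Phi(\dot J)$ explicitly.

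The cleanest way to exploit these formulas is to specialize to variations generated by a single potential $\varphi$ as in Proposition~\ref{propFORMULAvarVOL}. Since $u\in G$ acts by isometries of $g_{J_0}$, we have $(u\circ\ell)^*g_{J_0}=g_L$ for every $u$, and the differential reduces at $J_0$ to
\[
d_{J_0}\Phi(\dot J)(u)=\int_L(\Delta^{g_0}\varphi)\circ u\circ\ell\,\vol(g_L),\qquad u\in G.
\]
Letting $\varphi$ range over $C^\infty(M)$ and using surjectivity of $\Delta^{g_0}$ onto zero-mean smooth functions, the image of $d_{J_0}\Phi$ contains every function in the image of the linear map
\[
\Theta\colon C^\infty(M)\to C^\infty(G/G_\ell),\qquad \Theta(\psi)(u)=\int_L\psi\circ u\circ\ell\,\vol(g_L),
\]
restricted to $\psi$ with $\int_M\psi=0$. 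The problem therefore reduces to showing that $\Theta$ is essentially surjective; the one-dimensional defect from constants is filled in by any extra variation of $\dot J$ producing a nonzero first-order volume change, which is easy to exhibit directly.

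The core of the argument, and the main difficulty, is the surjectivity of $\Theta$. The natural route is to use $G$-equivariance: $\Theta$ intertwines the left regular representations of $G$ on $C^\infty(M)$ (via the isometric $G$-action on $M$) and on $C^\infty(G/G_\ell)$, so its image is a closed $G$-submodule of $C^\infty(G/G_\ell)$. By the Peter-Weyl decomposition of $C^\infty(G/G_\ell)$ into $G$-isotypic components, it suffices to show that no nonzero distribution $\mu$ on $G/G_\ell$ can annihilate $\Theta$; equivalently, that the smeared current $\int_{G/G_\ell}(u\circ\ell)_*\vol(g_L)\,d\mu(u)$ on $M$ vanishes only for $\mu=0$, which one rules out by local analysis on a slice transverse to the $G$-orbits through a point of $\ell(L)$. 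A more concrete alternative is a jet-extension argument: the iterated derivative of $\Theta(\psi)$ at $\id\in G/G_\ell$ along $X_1,\dots,X_k\in\g$ equals $\int_L(X_1\cdots X_k\psi)\circ\ell\,\vol(g_L)$, and one can prescribe such integrals arbitrarily by adjusting $\psi$ near $\ell(L)$, yielding the claim on the level of formal jets, hence on $C^\infty(G/G_\ell)$. Either route establishes surjectivity of $d_{J_0}\Phi$ and hence the submersion property in a neighborhood of $J_0$.
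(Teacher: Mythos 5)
Your reduction is sound and matches the paper's starting point: via Lemma~\ref{varJ}, the first variation formula \eqref{1stVARvolGJ} (equivalently Proposition~\ref{propFORMULAvarVOL}), and invertibility of the Laplacian on zero-mean functions, everything comes down to exact surjectivity of the map $\Theta(\psi)(u)=\int_L \psi\circ u\circ\ell\,\vol(g_L)$ onto $C^\infty(G/G_\ell)$. But that is precisely where the paper does its actual work, and both of your proposed routes for this core step stop short. The Peter--Weyl/annihilator argument only yields that the image has dense closure: the image of a continuous linear map between Fr\'echet spaces need not be closed, and density is not surjectivity, whereas a submersion requires genuine preimages. The jet argument fails at its final inference: prescribing the iterated derivatives $\int_L (X_1\cdots X_k\psi)\circ\ell\,\vol(g_L)$ arbitrarily gives surjectivity onto formal jets at the single point $\id$, and ``hence on $C^\infty(G/G_\ell)$'' is a non sequitur --- a linear map can be jet-surjective at a point without being surjective on smooth functions, and there is no open-mapping or Nash--Moser input available to upgrade it. For the same reason your framing claim, that surjectivity is an open condition for continuous linear maps between the relevant Fr\'echet spaces, is false in general.

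The paper closes exactly this gap by constructing explicit preimages, valid at any $J$ near $J_0$ since \eqref{1stVARvolGJ} is observed to hold at every point of $\cJ_\omega$. Injectivity of $u\mapsto u(L)$ on $G/G_\ell$ produces a point $p\in L$ and a neighborhood $V$ of $\id\in G$ such that, for $u\in V$, $u(p)\in L$ if and only if $u\in G_\ell$; this separation property yields a tube $\cW\simeq U\times V_\ell$ smoothly foliated by the leaves $u(L)\cap\cW$. Taking a product test function $\psi_U\otimes f$ on the tube, extending it to $M$ with total integral zero by a compensating bump supported away from the sweep of the submanifolds $u(L)$, and letting $\varphi$ be the Green potential of this extension, the resulting variation gives $d_J\Phi(\dot J)(u)=f(u)$ exactly for all $u\in V_\ell$ --- an explicit right inverse rather than a density statement. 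Note that this construction also dissolves your ``one-dimensional defect from constants,'' since the zero-mean correction is placed off the leaves and therefore does not constrain the achievable functions $f$. If you wish to keep your softer functional-analytic framing, you would in any case need to establish a separation statement of this kind to rule out annihilating distributions, at which point the paper's direct local construction is the shorter path.
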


\begin{proof} The map $u\mapsto
u(L)$ is injective on $\quot{G}{G_\ell}$ so there exists a point
$p\in L$ such that there is a neighborhood $V$ of $id \in G$
satisfying the condition: \begin{equation}\label{condV} \forall u\in
V, \; u(p) \in L \mbox{ if and only if } u\in G_\ell. \end{equation}  

 The neighborhood $V$ depends on $p$ but since it is an open
 condition on $p$ we can choose a neighborhood $U$ of $p$ in $L$ such
 that condition \eqref{condV} holds. Then the orbit map $\psi(q,
 \gamma)= \gamma(q)$ induces a smooth foliation of the image
 $\cW:=\psi(U\times V)$, in $M$. The leafs of $W$ are $u(L)\cap
 W\simeq U$ for $u\in G/G_\ell$. Actually, we have a
 diffeomorphism \begin{equation}\label{diffeoLOC} \tau: \cW
 \stackrel{\sim}{\longrightarrow} U\times V_\ell  \end{equation}
 where $V_\ell\subset G/G_\ell$ denotes the image of $V$ via the
 quotient map $G\rightarrow G/G_\ell$. 
 
 We can easily pushforward any bump function $\psi_U$ from $U$ to the
 whole $\cW$ via the action of $G$ and any function $f$ on $V_\ell$
 defines a $G_\ell$--invariant function on $V$. The pull-back
 $\tau^*(\psi_U\times f)$ on $\cW$ may be extended to $M$ so that it
 integrates to $0$. Taking the Green function of this extension to be
 the variation $\dot{J}$ as above, we get that \eqref{1stVARvolGJ}
 becomes $$d_J\Phi(\dot{J})(u) = f(u)$$ for all $u\in V_\ell$. From
 which we conclude that $d_J\Phi$ is surjective on
 $C^{\infty}(V_\ell)$. 
\end{proof}

\section{Deformation theory for local HSLAG toric fibrations}
\subsection{Obstacles to overcome}
So far, we only considered the case of a single HSLAG embedding
$\ell:L\to M$. We would like
to extend the theory to the case of a local HSLAG toric fibration
$\ell_t:L\to M$  as defined in \S\ref{sec:fibdef}.
There are several issues for extending
Theorem~\ref{theo:A} to a Lagrangian fibration:
\begin{enumerate}
\item The fibration becomes singular at the boundary of the
  polytope in the case of the standard fibration by Lagrangian tori of
  a toric Kähler manifold. In the case of a SLAG
  fibration of a K3 surface, certain fibers have several irreducible
  components. Such issues related involve complicated analytical problems that we shall not
  tackle at this paper. This is why we restrict our attention to local
  fibrations by smooth Lagrangian tori as in \S\ref{sec:fibdef}.  
\item Proposition~\ref{prop:critical} involves the choice a local minimum of the volume functional
  seen as a function on $G$. One cannot make a consistent choice a minimum
  for a family of tori, unless the volume has some special
  properties. We shall prove that for a generic choice of metric the
  volume functional is non degenerate, which allows to get around this issue.
\end{enumerate}

\begin{example}
\label{example:jump}
The fact that the Hamiltonian transformation $v$ of
Theorem~\ref{theo:B}  is not
necessarily small can be readily observed in an example. 

We consider
the unit $2$-dimensional sphere in $\RR^3$ with a given axis going from the north to the south pole and its standard complex structure obtained by rotating each tangent plane by an angle $\pi/2$. 

Every embedded circle $C$ is
automatically Lagrangian in such a low dimensional case. Furthermore,
the HSLAG property is equivalent to the fact that the circle has
constant curvature. There is a standard fibration $\ell_t:S^1\to S^2$, for $t\in (-1,1)$ of the sphere by circles of constant curvature, known as the parallels, obtained by rotating the sphere about its axis
\begin{center}
\begin{pspicture}[showgrid=false](-2,-2)(2,2)
\psrotate(0,0){90}{
\psscalebox {1.8}{
\pscircle(0,0){1}
\psset{linecolor=green, linestyle=dashed}
\psline(! 45 cos  45 sin) (! -45   cos  -45  sin)
\psline(! 90 cos  90 sin) (! -90   cos  -90  sin)
\psline(! 135 cos  135 sin) (! -135   cos  -135  sin)
}
\psset{linecolor=black, linestyle=dotted}
\psline(-2,0) (2,0)}
\end{pspicture}
\end{center}

We pick an axis $(D)$
going through the shere center, wich is distinct from the given axis, and does not belong to the equator plane. We consider a
deformation of the sphere, with prescriped area, into an ellipsoïd of revolution, with axis
$(D)$. As shown by the picture below, there is an obvious fibration by
circle of constant curvature.
However this fibration is far from the original fibration which suggests that 
a jump must occur.
\begin{center}
\begin{pspicture}[showgrid=false](-2,-2)(2,2)
\psrotate(0,0){90}{\psscalebox {1.8}{
\pscircle(0,0){1}
\psset{linecolor=green, linestyle=dashed}
\psline(! 45 cos  45 sin) (! -45   cos  -45  sin)
\psline(! 90 cos  90 sin) (! -90   cos  -90  sin)
\psline(! 135 cos  135 sin) (! -135   cos  -135  sin)

\psset{linecolor=black, linestyle=solid}
\psrotate(0,0){-45}{\psscalebox {1.5 0.5}{
\pscircle(0,0){1}
\psset{linecolor=blue, linestyle=dashed}
\psline(! 45 cos  45 sin) (! -45   cos  -45  sin)
\psline(! 90 cos  90 sin) (! -90   cos  -90  sin)
\psline(! 135 cos  135 sin) (! -135   cos  -135  sin)
}
}}}
\end{pspicture}
\end{center}
However, this only a heuristic argument  since we do not have a proof that this HSLAG fibration is the only one on an ellipsoïd of revolution.

We now give a correct argument.
The equator $\ell_0:S^1\to S^2$ is a geodesic, hence its Maslov form $\alpha_0$ vanishes. On the other hand, the sign of the integral $\int_{S^1}\alpha_t$  of the Maslov form $\alpha_t$ of $\ell_t$ changes when $t$ goes through $0$. 

We consider a variation $J_s$ of the standard complex structure $J_0$ on the sphere $S^2$, compatible with the symplectic form. The Maslov form of $\ell_t$ now depends on $s$ as well, and we denote it by $\alpha_{t,s}$. By continuity, there exists $t_s$ for each sufficiently small deformation $J_s$ such that $\int_{S^1}\alpha_{t_s,s} =0$. If there is a Hamiltonian transformation $v_s$ of the sphere such that $v\circ\ell$ is a HSLAG fibration with respect to $J_s$, then $v\circ \ell_{t_s}$ has constant curvature with exact Maslov form and it must be a geodesic. 

Now, we pick a particular variation $J_s$,  where each $J_s$ induces the metric of an ellipsoïd of revolution with three distinct axis unless $s=0$. 
Such surfaces are known to have only three closed geodesics. Upto a rotation, we may assume that none of the three geodesics agree with the equator of the starting round sphere. This shows that $v_s$ is not close to the identity for $s$ close to $0$.
\end{example}

\subsection{Positive perturbations}
\label{sec:positive}
We start with a Kähler manifold $(M,\omega,J_0)$. Let $G$ be its group of
Hamiltonian isometries. We consider a rigid HSLAG
embedding $\ell:L\to M$ as in Theorem~\ref{theo:A}. 

Let $G_\ell$ be the
subgroup of isometries of $G$ preserving the image of $\ell:L\to M$. In
other words $u\in G_\ell$ if and only if $u\circ\ell(L)=\ell(L)$. We
denote by $G_\ell^o$ the identity component of $G_\ell$.

Using the notation of  \S\ref{sec:residual}, we consider almost
complex structures $J\in W$ sufficiently close to $J_0$ and the
corresponding relatively HSLAG embedding $\ell_h:L\to M$ on
$(M,\omega,J)$. 


The modified volume functional $\voltilde: W \to \RR$ is generally not
$G$-invariant. However $G_\ell$ leaves $\cL=\ell(L)$ invariant by
definition. It follows that the map $\Psi$ defined at \eqref{eq:Psi} is $G_\ell$-equivariant,
and so is the map $\phi$ defined in Proposition~\ref{prop:hslag}. In turn, the  modified volume functional
$\voltilde:W\to \RR$ is $G_\ell$-invariant.
Hence, the modified volume functional may be understood as a map
$$
\voltilde_J:G/G^o_\ell\to \RR
$$
defined by $\voltilde_J([u])=\voltilde(u\cdot J)$. By Proposition~\ref{prop:critical},  critical
points of this functional correspond to HSLAG embeddings. Such critical
points are generally not non-degenerate. For instance, the choice
$J=J_0$ provides a constant function $\voltilde_{J_0}$ since $J_0$ is
$G$-invariant and all critical points are degenerate.

\begin{dfn}
 An almost complex structure $J\in W$ is called 
a \emph{positive deformation of $J_0$} with respect to the rigid HSLAG
$\ell:L\to M$  if the corresponding functional
$\voltilde:G/G^o_\ell\to \RR$ admits a non degenerate local minimum. The
subset of $W$ that consists of positive deformations is denoted $W^+$.
\end{dfn}

We have the following obvious result, by stability of non-degenerate
local minimum:
\begin{lemma}
  The set of positive deformations $W^+$ of $J_0$ is an open subset of
  $W$, endowed with
  its $C^{2,\eta}$-topology.
\end{lemma}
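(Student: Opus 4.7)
The plan is to deduce the lemma from the standard stability of non-degenerate critical points on a compact manifold. Since $G$ is a compact Lie group and $G^o_\ell$ is a closed subgroup, the quotient $G/G^o_\ell$ is a compact smooth manifold on which $G$ acts smoothly. Positivity of $J$ amounts to the existence of a non-degenerate local minimum of $\voltilde_J$ on this compact manifold, and non-degenerate local minima of $C^2$ functions are stable under $C^2$-small perturbations. So the whole game is to show that the map $J \mapsto \voltilde_J$ is continuous from $W$ (with the $C^{2,\eta}$-topology) into $C^2(G/G^o_\ell,\RR)$.

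To see this, unwind the definition: writing $\phi(0,J)=f_L+h$ with $h\in\cH$, one has $\voltilde_J([u])=\vol(L,\ell_{h(u\cdot J)}^*g_{u\cdot J})$, where $u\cdot J=u^*J$. The $G$-action on $\AC_\omega^{2,\eta}$ is smooth, the map $J\mapsto \phi(0,J)$ is smooth from $W$ to $U\subset C^{4,\eta}(L)$ by Proposition~\ref{prop:hslag}, the projection onto $\cH$ is continuous linear, the assignment $h\mapsto \ell_h$ is smooth into Lagrangian $C^{4,\eta}$-embeddings, and the Riemannian volume depends smoothly on the embedding and the metric $g_J$. Composing these confirms joint smoothness of $(J,[u])\mapsto \voltilde_J([u])$, hence in particular $C^2$-continuity in $[u]$ depending continuously on $J\in W$ in the $C^{2,\eta}$-topology.

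Now fix $J_*\in W^+$ and a point $[u_*]\in G/G^o_\ell$ at which $\voltilde_{J_*}$ attains a non-degenerate local minimum. The gradient vanishes at $[u_*]$ and the Hessian $H_{J_*}([u_*])$ is positive definite. Applying the implicit function theorem to the smooth section $(J,[u])\mapsto d\voltilde_J([u])$ of $T^*(G/G^o_\ell)$, whose $[u]$-derivative at $(J_*,[u_*])$ is precisely the invertible operator $H_{J_*}([u_*])$, yields a continuous map $J\mapsto[u(J)]$ on a neighborhood of $J_*$ in $W$ with $[u(J_*)]=[u_*]$ and $d\voltilde_J([u(J)])=0$. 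By joint continuity of the Hessian, $H_J([u(J)])$ remains positive definite after possibly shrinking this neighborhood, so each such $J$ lies in $W^+$. The only mildly technical point is the Hessian continuity in the $C^{2,\eta}$-topology, and this is immediate from the smoothness of $\phi$ built into Proposition~\ref{prop:hslag}; no additional analytic input is needed.
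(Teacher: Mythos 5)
Your proposal is correct and is essentially the paper's argument made explicit: the paper disposes of this lemma in one line, ``by stability of non-degenerate local minimum,'' and your continuity-plus-implicit-function-theorem tracking of the critical point $[u(J)]$ and its Hessian on the compact manifold $G/G^o_\ell$ is the standard unwinding of exactly that assertion. One minor inaccuracy worth noting: the pullback action $u\mapsto u^*J$ is continuous but not smooth on $\AC_\omega^{2,\eta}$ (each $u$-derivative costs a derivative of $J$), so the $C^2$-dependence on $[u]$ is more safely obtained by rewriting $\voltilde_J([u])$ as the volume of $u\circ\ell_{h(u^*J)}$ in the fixed metric $g_J$, so that $u$ enters only through composition by smooth isometries from the finite-dimensional group $G$ --- a routine repair at precisely the level of detail the paper itself elides.
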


The openness result does not insure that $W^+$ is non-empty. Although
we suspect that it is never empty, we shall prove it under some
reasonnable technical assumptions:
\begin{theo}
\label{theo:pospert}
  Let $(M,\omega,J_0)$ be a Kähler manifold and $\ell:L\to M$ be a rigid
  and stable HSLAG.

Then, the open set of positive deformations $W^+\subset W$ of $J_0$ is not
empty. Furthermore, there exists a smooth family of 
complex structure $J_s\in W$, defined for $s\geq 0$, such that $J_s\in W^+$ for
all $s>0$. In other words, $J_0$ is in the closure of~$W^+$. 
\end{theo}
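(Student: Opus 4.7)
The plan is to re-use the one-parameter family $J_s$ produced by Theorem~\ref{prop:evelinedeg} and to show that, along this path, the modified volume functional $\voltilde_{J_s}$ inherits the non-degenerate local minimum of the standard volume functional $\vol_{J_s}$ at $[\id]\in G/G_\ell^o$. Applying Theorem~\ref{prop:evelinedeg} (shrinking the neighborhood $W$ if needed) I obtain a smooth family $J_s\in W$ for $s\in[0,\epsilon)$ along which $\ell:L\to M$ is HSLAG and $\vol_{J_s}:G/G_\ell^o\to\RR$ admits a non-degenerate local minimum at $[\id]$ for every $s>0$. Because $\ell$ is HSLAG with respect to $J_s$, the uniqueness clause of Proposition~\ref{prop:hslag} forces $\phi(0,J_s)=0$, hence $h(J_s)=0$ and $\voltilde_{J_s}([\id])=\vol_{J_s}(\id)$; this identity will let me pass from one functional to the other near $[\id]$.

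The heart of the argument will be an estimate of the form $\voltilde_{J_s}(u_t)-\vol_{J_s}(u_t)=O(s^2t^2)$ along any smooth path $u_t\in G$ with $u_0=\id$. Using the identity $u^*g_{J_s}=g_{u^*J_s}$ valid for symplectomorphisms $u$, one rewrites
\[
\voltilde_{J_s}(u)=\vol\bigl(u\circ\ell_{h(u^*J_s)},\,g_{J_s}\bigr),
\]
so that $\voltilde_{J_s}(u)-\vol_{J_s}(u)$ measures the volume change in the fixed metric $g_{J_s}$ under the Hamiltonian deformation $u\circ\ell\mapsto u\circ\ell_{h(u^*J_s)}$, and the first variation formula yields
\[
\voltilde_{J_s}(u)-\vol_{J_s}(u)=-\int_L (d^*\alpha_H^{u,J_s})\,\tilde h\,\vol^{g_L}+O(\tilde h^2),
\]
where $d^*\alpha_H^{u,J_s}$ is the Maslov divergence of $u\circ\ell$ with respect to $J_s$ and $\tilde h$ corresponds to $h(u^*J_s)$ after transport by $u$. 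The key observation is that both $h(u^*J_s)$ and $d^*\alpha_H^{u,J_s}$ vanish on the two coordinate axes of the parameter square: on $\{s=0\}$ because $u^*J_0=J_0$ for $u\in G$ and $u\circ\ell$ is HSLAG for $J_0$; on $\{u=\id\}$ because $\ell$ is HSLAG for every $J_s$. The smooth dependence of $\phi$ on $J$ supplied by Proposition~\ref{prop:hslag} therefore yields $h(u_t^*J_s),\,d^*\alpha_H^{u_t,J_s}=O(st)$, from which the $O(s^2t^2)$ estimate follows uniformly as $(s,t)\to 0$.

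To finish, I combine this with the Taylor expansion underlying the proof of Theorem~\ref{prop:evelinedeg}, together with Proposition~\ref{propFORMULAvarVOL}, which gives
\[
\vol_{J_s}(u_t)-\vol_{J_s}(\id)=\tfrac{1}{2}s\,Q(X,X)\,t^2+O(st^3)+O(s^2t^2),
\]
where $X=\dot u_0$ and $Q$ is positive definite in directions transverse to $\Lie G_\ell^o$. Adding the two expansions yields
\[
\voltilde_{J_s}(u_t)-\voltilde_{J_s}(\id)=\tfrac{1}{2}s\,Q(X,X)\,t^2+O(st^3)+O(s^2t^2),
\]
so for $s>0$ sufficiently small the leading $s\,t^2$-term dominates in transverse directions, the Hessian of $\voltilde_{J_s}$ at $[\id]$ is positive definite, and hence $J_s\in W^+$. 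I expect the main technical obstacle to lie in making the remainder estimates uniform in $(s,t)$; this uniformity rests on the smoothness of $J\mapsto\phi(0,J)$ provided by Proposition~\ref{prop:hslag}, whose hypotheses in turn require the stability assumption so that $\Box_{\ell,J_0}$ is invertible on the $L^2$-complement of $\cK_L$.
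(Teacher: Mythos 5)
Your argument is correct in substance but reaches the conclusion by a genuinely different route than the paper. The paper proves non-degeneracy of $\voltilde_{J_s}$ at $[\id]$ by computing the second variation of the volume of the corrected embeddings directly: writing $J_{s,t}=u_t^*J_s$ and invoking Proposition~\ref{prop:hslag}, the Hessian is $\ip{\Box_{J_s}(\dot k+\dot h),\dot k+\dot h}$, which is expanded into three terms; the term $\ip{\Box_{J_s}\dot k,\dot k}$ is positive of order $s$ by Theorem~\ref{prop:evelinedeg}, stability is used to get $\ip{\Box_{J_s}\dot h,\dot h}\geq 0$ (with a uniform lower eigenvalue bound in the $\dot h$ direction), and the cross term is absorbed because $\Box_{J_s}\dot k\to 0$. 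You instead compare $\voltilde_{J_s}$ with $\vol_{J_s}$ and show the difference is $O(s^2t^2)$, exploiting the double vanishing of $h(u_t^*J_s)$ and of $d^*\alpha_H^{u_t,J_s}$ on the axes $\{s=0\}$ (since $u^*J_0=J_0$ for $u\in G$) and $\{t=0\}$ (since $\ell$ is HSLAG for every $J_s$, whence $\phi(0,J_s)=0$ by the uniqueness clause of Proposition~\ref{prop:hslag} — a point you identify correctly). The trade-off: your route requires the \emph{quantitative} expansion $b_s\geq c\,s\,\|X\|^2$ extracted from the proof of Theorem~\ref{prop:evelinedeg}, not merely its qualitative conclusion (3) — you use this correctly, and the uniformity worries you flag are handled by compactness of the unit sphere in $\Lie\, G/\Lie\, G_\ell$ together with the smooth dependence of $\phi$ on $(t,s)$; the paper's route avoids the two-variable Taylor bookkeeping but pays with the three-term eigenvalue analysis.

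One inaccuracy in your closing sentence: you misplace the role of stability. Proposition~\ref{prop:hslag} requires only \emph{rigidity}; the linearization $\dot f\mapsto \dot f_L+\Box_{\ell,J_0}\dot h$ is an isomorphism because $\Box$ is elliptic and self-adjoint (Lemma~\ref{lemma:sa}) with kernel $\cK_L$ by definition — positivity of $\Box$ on $\cH$ plays no role there. In the paper's proof, stability is what signs the term $\ip{\Box_{J_s}\dot h,\dot h}$; in your proof that term is absorbed into the $O(s^2t^2)$ correction since $\dot h=O(s)$, so your argument as written does not actually invoke stability anywhere. This does not invalidate anything (the hypothesis is simply unused), but you should either note this explicitly or remove the misleading justification.
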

Notice that Theorem~\ref{theo:pospertintro} is a less technical
restatement of the above theorem.
\begin{proof}
  
Let $J_s$ be a family of complex structures as in the above
proposition. 
\begin{lemma}
  Under the stability assumption of Theorem~\ref{theo:pospert}, the
  functional
$$\voltilde_{J_s}:G/G^o_\ell\to \RR$$
 admits non-degenerate local
  minimum at the identity for every $s>0$ sufficiently small.
\end{lemma}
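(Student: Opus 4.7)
The plan is to compare the Hessian of $\voltilde_{J_s}$ at the identity of $G$ with that of $\vol_{J_s}$, which Theorem~\ref{prop:evelinedeg} guarantees is non-degenerate on transverse directions for $s>0$, and to use the stability of $\ell$ to control the correction. The starting observation is that $\ell$ is HSLAG with respect to $J_s$ by property (2) of Theorem~\ref{prop:evelinedeg}, so $\Psi(0,0,J_s)=d^*\alpha_{H^{J_s}}=0$; by uniqueness in Proposition~\ref{prop:hslag}, $\phi(0,J_s)=0$ for $s$ small, whence $h(\id,s)=0$ and $\voltilde_{J_s}(\id)=\vol_{J_s}(\id)$.

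I would next exploit the identity $\voltilde_{J_s}(u)=\vol(u\circ\ell_{h(u,s)},J_s)$ and, for a one-parameter subgroup $u_t\subset G$ with infinitesimal generator the Hamiltonian vector field $X_v$ of a Killing potential $v\in\cK_M$, regard $\ell_t:=u_t\circ\ell_{h(u_t,s)}$ as a Hamiltonian variation of $\ell$ inside $(M,\omega,J_s)$. Its infinitesimal Hamiltonian function on $L$ is $F:=v|_L+\dot h_0$, where $\dot h_0:=\frac{d}{dt}h(u_t,s)|_{t=0}\in\cH$. Because $\ell$ is HSLAG with respect to $J_s$, integration by parts against $d^*\alpha_{H^{J_s}}=0$ kills the first variation, so the identity is critical for $\voltilde_{J_s}$, and the standard second variation at a HSLAG embedding gives
$$
\mathrm{Hess}(\voltilde_{J_s})_{\id}(X_v,X_v)=\langle\Box^{J_s}F,F\rangle_{L^2}.
$$

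The proof then rests on three quantitative estimates. First, the proof of Theorem~\ref{prop:evelinedeg} shows that the coefficient $b_s$ in $\vol_{J_s}(u_t)=c+b_st^2+o(t^2)$ vanishes linearly in $s$, so $\mathrm{Hess}(\vol_{J_s})_{\id}(X_v,X_v)=\langle\Box^{J_s}v,v\rangle\geq c\,s\,\|X_v\|^2$ for some $c>0$ and every $X_v$ transverse to $T_{\id} G^o_\ell$. Second, differentiating the relation $\Psi(\phi(0,J),0,J)=0$ yields $\dot h_0=-(\del_f\Psi)^{-1}\cdot\del_J\Psi\cdot\cL_{X_v}J_s$, and since $v$ is a Killing potential of $J_0$ one has $\cL_{X_v}J_0=0$; hence $\|\dot h_0\|_{L^2}=O(s)\|X_v\|$. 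Third, $v|_L\in\cK_L=\ker\Box^{J_0}$ together with the smooth dependence of $\Box^J$ on $J$ gives $\|\Box^{J_s}v\|_{L^2}=O(s)\|v\|$. Since $v|_L\in\cK_L$ and $\dot h_0\in\cH$ are $L^2$-orthogonal, expanding $\langle\Box^{J_s}F,F\rangle$ produces cross and pure-$\dot h_0$ contributions of size $O(s^2)\|X_v\|^2$, while stability ensures $\langle\Box^{J_s}\dot h_0,\dot h_0\rangle\geq 0$ for $s$ small. Combining, $\mathrm{Hess}(\voltilde_{J_s})_{\id}(X_v,X_v)\geq c\,s\,\|X_v\|^2-O(s^2)\|X_v\|^2$ is strictly positive on transverse directions for $s>0$ sufficiently small, which is the claimed non-degenerate local minimum.

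The main obstacle I anticipate is extracting the precise linear-in-$s$ lower bound on $\mathrm{Hess}(\vol_{J_s})$ from the proof of Theorem~\ref{prop:evelinedeg} and matching it against the $O(s)$ smallness of $\dot h_0$, so that the correction is of strictly higher order than the dominant term. A secondary technical point is confirming the smoothness of $J\mapsto\Box^J$ on the relevant Hölder space and the resulting $C^2$-differentiability of $\voltilde_{J_s}$ on $G$, both of which should flow from Lemma~\ref{lemma:bootstrap} combined with standard dependence theorems for elliptic operators.
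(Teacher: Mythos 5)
Your proposal is correct and takes essentially the same route as the paper: criticality at the identity because $\ell$ remains HSLAG for $J_s$, the second variation written as $\ip{\Box_{J_s}(\dot k+\dot h),\dot k+\dot h}$ with $\dot k\in\cK_M$ and $\dot h\in\cH$, strict positivity of the pure Killing term from property (3) of Theorem~\ref{prop:evelinedeg}, non-negativity of the $\cH$-term from stability, and higher-order control of the cross term. Your explicit bounds ($b_s\geq c\,s$, $\|\dot h_0\|_{L^2}=O(s)$ via $\cL_{X_v}J_0=0$, and $\|\Box^{J_s}v\|_{L^2}=O(s)$) simply make quantitative the paper's absorption step, which instead invokes that $\Box_{J_s}\dot k$ tends to $0$ as $s\to 0$ together with the uniform lower bound of $\Box_{J_s}$ in the direction of $\cH$.
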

\begin{proof}
The fact that $\voltilde$ admits a critical point at the identity is
clear, since $\ell:L\to M$ is a HSLAG in $(M,\omega,J_s)$.

The only thing to be proved is the fact that it is non-degenerate.
Let $u_t\in G$ be a one parameter subgroup of $G$ transverse to
$G_\ell$, with $u_0=\id$  and $\dot k\in \cK_M$ a corresponding Hamiltonian
function such that $\frac d{dt}u_t|_{t=0}=X_{\dot k}$.

Let $J_{s,t}=u_t^* J_s$ and $\ell_{t,s}= \ell_{h_{s,t}}:L\to M$ be the corresponding
solution, a relatively HSLAG embedding, provided by the implicit function theorem as in Proposition~\ref{prop:hslag}.

By definition $\ell_{s,0}=\ell$. We can switch our point of view using
$\tilde \ell_{s,t}=u_t\circ\ell_{s,t}$ with a fixed complex structure
$J_s$.
By the second variation formula, we have
$$
\frac {d^2}{dt^2}\vol(\tilde\ell_{s,t},g_{J_s})|_{t=0}=
\ip{\Box_{J_s}(\dot k+\dot h),\dot k +\dot h}.
$$
where $\dot h\in \cH$ is the projection of $\frac \del{\del k}\phi|_{(0,J_s)}\cdot \dot k$ on
$\cH$.
Expanding the above inner product and integrating by part
we obtain
$$
\ip{\Box_{J_s}\dot k,\dot k} + \ip{\Box_{J_s}\dot h,\dot h} + 2 \ip{\Box_{J_s}\dot k,\dot h}
$$
For $s>0$,  since $J_s$ is provided
by Theorem~\ref{prop:evelinedeg} and we have the non-degeneracy
property $(3)$. 
The second term is non
negative for $s$ sufficiently small by the stability assumption of
$\ell:L\to M$ into $(M,\omega,J_0)$. The
third term is controlled by the first two terms since $\Box_{J_s}\dot
k$ converges to $0$ as $s$ goes to $0$ and the eigenvalues of
$\Box_{J_s}$ are uniformly bounded from below in the direction $\dot
h$.
Hence the second variation of the volume must be positive for $s>0$
sufficiently small.
\end{proof}

In conclusion, the complex structures $J_s$ belong to $W^+$ for $s>0$
sufficiently small. This completes the proof of Theorem~\ref{theo:pospert}.
\end{proof}

\subsection{Invariant Lagrangian fibrations}
We would like to extend the deformation theory of \S\ref{sec:defo} to the case
of a fibration. For this purpose, we choose to restrict to the case of
$G^o_\ell$-invariant fibrations in the sense of Definition~\ref{dfn:equivariant}.
This technical assumption is not too demanding as it is satisfied by
examples provided by toric Kähler geometry. We state few observations on $G^o_\ell$-invariant fibrations in the following Proposition.

\begin{prop} 
\label{prop:equivtoric}Let $\ell_t : L \simeq M^{2n}$ be a Lagrangian fibration with $\ell_0=\ell$ and $t\in B(0,\epsilon)$ and such that $L$ is compact.
\begin{enumerate}
 \item If $G_\ell^o$ acts effectively on $\ell(L)$ then $G_\ell^o$ is a torus of dimension at most $n$. 
 \item If $\ell_t$ is a $G^o_\ell$-invariant fibration then $G_\ell^o$ is a torus of dimension at most $n$.
 \item If $G_\ell^o=\TT^n$ then there exists a $G^o_\ell$-invariant fibration $\tilde{\ell}_t :L \ra M$ such that $\tilde{\ell}_0=\ell$ is a neigborhood of $\ell(L)$. 
 \item If $(M,g,\omega,J)$ is a toric K\"ahler manifold with momentum map $\mu: M\ra P\subset \RR^n$, then $\{\mu^{-1}(p)\,|\, p\in\mathring{P} \}$ is a $\TT^n$--invariant fibration. 
\end{enumerate}
\end{prop}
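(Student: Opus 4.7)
My strategy is to handle the four parts in order, with (2) being the technical heart and (1), (3), (4) following from more standard techniques. The unifying theme is that the Lagrangian hypothesis forces a Hamiltonian isometry preserving nearby Lagrangian leaves to act by a translation on each leaf, which converts the problem into one about homomorphisms into a torus.

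For (1), by Remark \ref{remLagFIB=toric} we have $\ell(L)\simeq\TT^n$, and $G_\ell^o$ acts on it as a compact connected Lie group of isometries of the induced metric. I would invoke the Conner--Raymond theorem, which says that any smooth action of a compact connected Lie group on a closed aspherical manifold factors through a torus. Effectiveness then identifies $G_\ell^o$ itself with this torus; the bound $\dim G_\ell^o\le n$ follows because an effective torus action has finite principal isotropy, so the generic orbit has dimension $\dim G_\ell^o$ and must fit inside the $n$-dimensional $\TT^n$.

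For (2), I would work in action--angle coordinates $(p,q)\in B^n\times\TT^n$ for the local fibration $\ell_t$, in which $\omega=\sum dp_i\wedge dq_i$ and the fibers are the $p$-level sets. For $\xi\in\mathfrak{g}_\ell^o$ with Hamiltonian $\mu^\xi$, $G_\ell^o$-invariance makes $X_\xi$ tangent to each Lagrangian fiber; since fibers are Lagrangian, $d\mu^\xi$ annihilates fiber directions, so $\mu^\xi=\mu^\xi(p)$ and $X_\xi=\sum_i(\partial\mu^\xi/\partial p_i)\partial_{q_i}$ is translation-invariant on each fiber. Thus the $G_\ell^o$-action on each fiber $\{p=t\}\simeq\TT^n$ factors through a continuous homomorphism $\rho_t:G_\ell^o\to\TT^n$, which in turn factors through the compact abelianization of $G_\ell^o$. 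Since $\mathrm{Hom}$ between compact tori is a discrete set, continuity in $t$ forces $\rho_t\equiv\rho_0$. A non-trivial common kernel $K$ would act trivially on the entire fibration neighborhood $V$ of $\ell_0(L)$, hence trivially on $M$ by unique continuation for Killing fields, contradicting the effectiveness of $G\subset\mathrm{Isom}(M,g_{J_0})$. Therefore $\rho_0$ embeds $G_\ell^o$ as a subtorus of $\TT^n$.

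For (3), the assumption $G_\ell^o=\TT^n$ makes the translation action on $\ell(L)$ free and transitive. Taking a moment map $\mu:M\to(\mathfrak t^n)^*\simeq\RR^n$, the identity $d\mu^\xi=-\iota_{X_\xi}\omega$ together with the non-degeneracy of $\omega$ shows that $\mu$ is a submersion in a tubular neighborhood $V$ of $\ell(L)$; since $\ell(L)$ is compact, connected, and of the correct dimension, $\mu^{-1}(\mu(\ell(L)))\cap V=\ell(L)$, and the nearby $\mu$-level sets provide the required $\TT^n$-invariant local Lagrangian fibration through $\ell$. Part (4) is immediate from the Delzant--Lerman--Tolman picture recalled in \S\ref{sectKReduction}: over $\mathring P$ the moment map is a $\TT^n$-equivariant submersion with Lagrangian torus fibers. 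The main obstacle is the discreteness argument in (2): one must verify carefully that $\rho_t$ is a genuine continuous family of homomorphisms (not merely an infinitesimal formula) so that torus rigidity applies, while (1) otherwise relies on Conner--Raymond as a black box.
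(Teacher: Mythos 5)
Your proposal is correct in substance, but it follows a genuinely different route from the paper's (four-line) proof, most visibly in parts (1) and (2). For (1) the paper argues symplectically: by the identity $\iota_{X_{[a,b]}}\omega=-d\bigl(\omega(X_a,X_b)\bigr)$ of \eqref{eqHAM}, the orbits of $G_\ell^o$ inside the Lagrangian $\ell(L)$ are isotropic, whence the torus conclusion; you instead invoke Conner--Raymond on the aspherical fiber $\TT^n$ --- a topological black box, external to the paper's toolkit but fully rigorous, and it cleanly supplies the effectiveness-to-torus step that the paper's sketch leaves implicit. For (2) the paper does not use action--angle coordinates at all: it simply observes that $G^o_\ell\subset G^o_{\ell_t}$ for every $t$, that $G^o_{\ell_t}$ acts effectively on $\ell_t(L)$ for an open dense set of $t$, and then applies (1) to such a fiber, so a closed connected subgroup of a torus is again a torus of dimension at most $n$. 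Your argument is independent of (1) and proves strictly more: the action on each fiber is by translations, and the homomorphisms $\rho_t$ are locally constant by discreteness of $\mathrm{Hom}$ into a torus --- this rigidity is exactly the mechanism that forces the flows of the $X_\xi$ to close up simultaneously on all fibers, playing the role that the commutator-killing formula \eqref{eqHAM} plays for the paper. For (3) the two arguments essentially coincide: the paper takes the generic orbits of $G_\ell^o=\TT^n$ near $\ell(L)$, which are isotropic because for an abelian group \eqref{eqHAM} makes $\omega(X_a,X_b)$ constant, zero on the Lagrangian, hence zero everywhere; your moment-map level sets agree with these orbits near $\ell(L)$, since $\mu$ is constant on orbits and both are $n$-dimensional. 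One assertion you leave unproved in (3) --- that $\TT^n$ acts freely and transitively on $\ell(L)$ --- deserves a line: effectiveness on $\ell(L)$ holds because an element of $G$ fixing $\ell(L)$ pointwise is a holomorphic isometry whose differential fixes $T_p\ell(L)\oplus J_0\,T_p\ell(L)=T_pM$, hence is the identity (alternatively, your own unique-continuation argument applies); an effective $\TT^n$-action on $\TT^n$ then has an open, closed, hence full principal orbit with trivial isotropy. Both treatments agree that (4) is immediate from the toric structure. In short: the paper buys brevity by routing (2) through (1) and staying inside symplectic identities, while your version buys self-containedness in (2), a sharper conclusion (translation actions, constancy of $\rho_t$), at the cost of citing Conner--Raymond for (1).
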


\begin{proof} 
 The first affirmation follows the observation that the orbit of $G_\ell^o$ in $\ell(L)$ must be isotropic, thus a torus, thanks to the formulas
 \begin{equation}\label{eqHAM} d\omega(X_a,X_b) =-\omega([X_a,X_b],\cdot)=-\omega(X_{[a,b]},\cdot)
 \end{equation} where $X_a$ is the vector field induced on $M$ by $a\in \mbox{Lie }G_\ell^o$. The second affirmation is a consequence of the observation that in the case of $G^o_\ell$-invariant fibration we have $G^o_{\ell} \subset G^o_{\ell_t}$ and there is an open and dense subset of $t\in B(0,\epsilon)$ such that $G^o_{\ell_t}$ acts effectively on $\ell_t(L)$. For the third one, we consider the generic orbits of $G_\ell^o=\TT^n$, which must be Lagrangian by the formula \eqref{eqHAM} above. The fourth affirmation is obvious.
\end{proof}

\begin{theo}
\label{theo:equivexist}
  Let $(M,\omega,J_0)$ be a Kähler manifold and $\ell:L\to M$ a rigid HSLAG
  embedding, where $L$ is a real torus.

Assume that non-trivial harmonic forms on $L$ for the induced metric
do not vanish at any point. Then there exists a $G^o_\ell$-invariant
HSLAG fibration $\ell_t:L\to M$ such that $\ell_0=\ell$.
\end{theo}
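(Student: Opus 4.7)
The plan is to build the fibration in two stages: first assemble an approximate $G^o_\ell$-invariant local Lagrangian fibration using harmonic $1$-forms on $L$, and then correct each approximate fiber by a small Hamiltonian perturbation to make it genuinely HSLAG, by running the implicit function theorem framework of Section~\ref{sec:defo} in families and using the residual $G$-action to absorb the finite-dimensional obstruction.

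For the first stage, I would invoke the equivariant Darboux--Weinstein theorem (applicable since $G^o_\ell$ is a compact subgroup of $\Ham_\omega$ preserving $\cL=\ell(L)$) to obtain a $G^o_\ell$-equivariant symplectic identification of a tubular neighborhood of $\cL$ with a neighborhood $\cU$ of the zero section in $T^*L$. Since $L=\TT^n$, Hodge theory for the induced metric $g_L = \ell^* g_{J_0}$ yields an $n$-dimensional space $\cH^1(L)$ of harmonic $1$-forms. Because $G^o_\ell$ is connected it acts trivially on $H^1(L,\RR)$, and its action commutes with the Hodge projector, so every harmonic $1$-form is $G^o_\ell$-invariant on the nose. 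The non-vanishing hypothesis together with the dimension count $\dim \cH^1(L)=n=\dim T^*_x L$ forces the evaluation map $\cH^1(L)\to T^*_x L$ to be an isomorphism at every $x$, so any basis $\alpha_1,\dots,\alpha_n$ of $\cH^1(L)$ is pointwise linearly independent. For $t=(t_1,\dots,t_n)\in B(0,r)\subset\RR^n$ small, the graph in $\cU$ of the closed form $\beta_t = \sum_i t_i\alpha_i$ corresponds through the Weinstein chart to a $G^o_\ell$-invariant Lagrangian embedding $\ell_t^{\mathrm{app}}:L\to M$ with $\ell_0^{\mathrm{app}}=\ell$; pointwise independence of the $\alpha_i$ ensures that $(t,x)\mapsto \ell_t^{\mathrm{app}}(x)$ is an embedding, so this is a $G^o_\ell$-invariant local Lagrangian fibration.

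For the second stage, I would run the deformation theory of Section~\ref{sec:defo} in families over~$t$, taking $\ell_t^{\mathrm{app}}$ as the base Lagrangian in place of the fixed $\ell$. The map $\Psi$ of~\eqref{eq:Psi}, reinterpreted with base $\ell_t^{\mathrm{app}}$, has a $\partial_f$-derivative at $(0,0,J_0)$ of the form $\dot f_L + \Box_{\ell_t^{\mathrm{app}},J_0}\dot h$, which remains an isomorphism $\cK_L\oplus\cH\to C^{0,\eta}(L)$ because the finite-dimensional spaces $\fK_{h_t}$ and $\cK_{L_t}$ vary continuously with $t$ and $\Box$ depends continuously on the base embedding. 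A parameterized version of Proposition~\ref{prop:hslag} therefore produces a smooth family $f_t\in C^{4,\eta}(L)$ with $f_0=0$ for which the induced Hamiltonian perturbation $\tilde\ell_t$ of $\ell_t^{\mathrm{app}}$ is relatively HSLAG, with a finite-dimensional obstruction $\psi_t\in\fK_{h_t}$ satisfying $\psi_0=0$. Equivariance of all the data under $G^o_\ell$ combined with uniqueness in the IFT yields $G^o_\ell$-invariance of $f_t$, hence of $\tilde\ell_t$.

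The main obstacle is to kill the residual obstruction $\psi_t$ for $t\ne 0$. The natural strategy, transposing the argument of Proposition~\ref{prop:critical} from a variation of $J$ to a variation of the Lagrangian, is to seek $u_t\in G$ representing a critical point of the modified volume functional within the $G$-orbit of $\tilde\ell_t$; at such $u_t$, rigidity and the Cauchy--Schwarz computation at the end of the proof of Proposition~\ref{prop:critical} force $\psi_t=0$. The delicate point is that at $t=0$ the modified volume on $G/G^o_\ell$ is identically $\vol(\ell)$, so every point is a degenerate critical point and one cannot directly invoke the implicit function theorem in $u$ to produce a smooth family $u_t$ with $u_0=\id$. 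The remedy is to treat $(t,u)$ as joint parameters, use rigidity to identify a complement to $\Lie(G^o_\ell)$ inside $\cK_M$ projecting isomorphically onto $\cK_L$, and show that the $t$-derivative of the obstruction map $(t,u)\mapsto \psi_{t,u}$ is transverse to this complement along the harmonic directions furnished by the first stage, yielding a smooth family $u_t\in G/G^o_\ell$ by a submersion argument in the spirit of Proposition~\ref{propMORSE}. The desired $G^o_\ell$-invariant HSLAG fibration is then $\ell_t := u_t\circ\tilde\ell_t$, the $G^o_\ell$-invariance of the image surviving because $u_t$ descends to $G/G^o_\ell$.
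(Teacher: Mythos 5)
Your first two stages reproduce the paper's construction faithfully: the $G_\ell$-equivariant Weinstein chart, the graph fibration over the space of harmonic $1$-forms (your pointwise evaluation-isomorphism argument is a correct elaboration of the paper's remark that non-vanishing of harmonic forms makes the map an embedding), and the parameterized implicit function theorem of Proposition~\ref{prop:hslag} producing relatively HSLAG embeddings, with equivariance of the solution obtained from uniqueness in the IFT. But your third stage contains a genuine gap, and it is a gap the paper never has to confront. You treat the residual obstruction $\psi_t\in\fK_{h_t}$ as something that must be killed by selecting group elements $u_t\in G$; since, as you correctly observe, the modified volume on $G/G^o_\ell$ at $J_0$ is identically constant and every critical point is degenerate, you are driven to an unsubstantiated transversality claim — that the $t$-derivative of $(t,u)\mapsto\psi_{t,u}$ is transverse to a complement of $\Lie(G^o_\ell)$ along the harmonic directions. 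Nothing in your sketch proves this, and Proposition~\ref{propMORSE} cannot supply it: that proposition asserts surjectivity of $d_J\Phi$ in the \emph{$J$-variable}, not any transversality of the obstruction in the $(t,u)$-variables. Moreover, your final family $\ell_t=u_t\circ\tilde\ell_t$ need not be $G^o_\ell$-invariant: the image $u_t(\tilde\ell_t(L))$ is invariant under the conjugate $u_t\,G^o_\ell\,u_t^{-1}$, not under $G^o_\ell$, unless $u_t$ normalizes $G^o_\ell$, which you do not establish.

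The point you missed is that Theorem~\ref{theo:equivexist} is posed entirely at the unperturbed structure $J_0$, where $G$ consists of honest isometries of $g_{J_0}$, and there the obstruction vanishes for free — the whole minimization machinery of Proposition~\ref{prop:critical} (and your transposed version of it) is unnecessary. By rigidity, the relatively HSLAG condition for $\ell_\alpha$ reads $d^*\alpha_H = v_0\circ\ell_\alpha$ for some Killing potential $v_0\in\cK_M$. Flowing $\ell_\alpha$ by the one-parameter subgroup $u_s=\exp(sX_{v_0})\subset G$ leaves the volume unchanged, since each $u_s$ is an isometry of $g_{J_0}$; but the first variation formula then gives
\begin{equation*}
0\;=\;\left.\frac{d}{ds}\right|_{s=0}\vol\bigl(u_s\circ\ell_\alpha,\, g_{J_0}\bigr)
\;=\;-\int_L \ip{d^*\alpha_H,\; v_0\circ\ell_\alpha}\,\vol^{g_L}
\;=\;-\bigl\|d^*\alpha_H\bigr\|_{L^2}^2,
\end{equation*}
so $d^*\alpha_H=0$ and every $\ell_\alpha$ is already HSLAG, with no choice of $u_t$ at all; the $G_\ell$-equivariance of $\phi$, which you did prove, then yields the invariance of the fibration directly. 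This is exactly how the paper closes the argument, and it also explains why Theorem~\ref{theo:equivexist} requires neither stability nor positive perturbations, in contrast with Theorem~\ref{theo:B}, where $J\neq J_0$ and the volume is genuinely non-constant on $G$-orbits.
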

%

\begin{proof}[Proof of Theorem~\ref{theo:equivexist}]
  The compact group $G_\ell$ preserves the image of $\ell:L\to M$. Thus
  $G_\ell$ has an induced action on $L$ by diffeomorphism. This action
  also induces a symplectic $G_\ell$-action on $T^*L$. The
  starting point of our setup to apply the implicit function Theorem (cf. \S\ref{sec:defo}) requires the choice of a
  symplectic diffeomorphism between a neighborhood of the image of
  $\ell$ in $M$ and a neighborhood of the zero-section in $T^*L$. This
  symplectomorphism can be chosen to be $G_\ell$-equivariant.

We have a Riemannian metric $g_L$ on $L$ induced by $g_{J_0}$. Since
$G_\ell$ acts isometrically on $(M,g_{J_0})$, the induced action on
$(L,g_L)$ is also isometric. In particular, $G_\ell$ acts on the space
$\cH^1(L,g_L)$ of harmonic $1$-forms of $(L,g_L)$. Since elements of
$G^o_\ell$ are homotopic to the identity in $\Diff(L)$, they act
trivially on the cohomology of $L$, hence on the space of harmonic $1$-forms $\cH^1(L,g_L)$.

One can construct a standard Lagrangian toric fibration
$$
\cH^1(L,g_L)\times L \to T^*L
$$
given by $(\alpha,x)\mapsto \alpha_x$. This construction is $G_\ell$
equivariant, by definition. Using the $G_\ell$-equivariant
indentification between a neighborhood of the
$0$-section of $T^*L$ and a neighborhood of the image of $\ell$, we
deduce a local lagrangian toric fibration
$$
\hat \ell : K \times L \to M
$$
where $K$ is a $G_\ell$-invariant neighborhood of the origin in
$\cH^1(L,g_L)$. For $K$ sufficiently small, this map is indeed an
embedding since by assumption, harmonic $1$-forms do not vanish at any point.
We use the notation $\hat\ell_\alpha = \hat\ell(\alpha,\cdot)$ in the sequel.

By definition $\hat \ell_0=\ell$ so it must be HSLAG. However
$\hat\ell_\alpha$ may not be HSLAG for $\alpha\in K$. By construction the fibration is
$G_\ell$-equivariant and the action induced by $G_\ell^o$ is trivial
on the parameter space $\alpha\in K$. 

Using a version of the implicit function theorem with parameter as in
Proposition~\ref{prop:hslag}, one
can perturb each map $\hat\ell_\alpha$ for $\alpha\in K$ by a Hamiltonian deformation,
provided $K$ is sufficiently small, in
order to get a relatively HSLAG Lagrangian embedding. More precisely,
there exists a smooth map
$$
\phi : K \to U,
$$
with the notations of Proposition~\ref{prop:hslag}, 
such that the lagrangian embedding $\ell_\alpha$ defined by the
$1$-form $\alpha + dh_\alpha$, where $h_\alpha = \phi(\alpha)$ is
relatively HSLAG.

 By uniqueness of
the solution of the IFT and the fact that $G$ acts by isometries on
$g_{J_0}$ we obtain that $\phi$ is a $G_\ell$ equivariant map.
In particular, the Lagrangian fibration $\ell_\alpha : L\to M$ is also
$G_\ell^o$-invariant. 

The invariance of the metric also implies that
the volume of $\ell_\alpha$ is invariant under the action of $G$. This
forces the equation $d^*\alpha_H=0$ by the first variation formula for
the volume. Therefore, each $\ell_\alpha$ must be HSLAG.
\end{proof}
\subsection{Fibrations and positive perturbations}
At this stage, all the tools necessary to handle the case of HSLAG
fibrations have been introduced.

Let  $\ell_t:L\to M$ be a
HSLAG toric fibrations into a Kähler manifold $(M,\omega,J_0)$, with
$G$ its the group of
Hamiltonian isometries. We are assuming that $\ell_t$ is
$G_{\ell_0}^o$-invariant.

We are assuming that $J\in W^+$ is a positive perturbation with respect
to $\ell=\ell_0$. By stability of non-degenerate minimum, we deduce
that $J$ is positive with respect to every $\ell_t$, for $t$
sufficiently small.

Provided $\ell_0$ is rigid, using the implicit function theorem, we deduce a family of
Hamiltonian deformations $\ell_{t,h_{t,u}}$ of $\ell_t$ which are
relatively HSLAG with respect to the complex structures $u\cdot J$
for all $u\in G$. Equivalently, $u\cdot \ell_{t,h_{t,u}}$ is relatively
HSLAG with respect to $J$. For each $t$, there exists a non-degenerate local minimum
of the modified volume functional $\voltilde$. Since it is non degenerate, we may choose
$u_t$, depending smoothly on $t$ such that $u_t\cdot \ell_{t,h_{t,u_t}}$
achieve such a local minimum of $\voltilde$. 

By Proposition~\ref{prop:critical},
$u_t\cdot \ell_{t,h_{t,u_t}}:L\to M$ must be HSLAG with respect to $J$.
We deduce the following proposition:
\begin{prop}
\label{prop:almostsolution}
Let $\ell_t:L\to M$ be a $G_\ell^o$-invariant HSLAG toric fibration in
a Kähler manifold $(M,\omega, J_0)$ such that $\ell_0$ is rigid.

For each positive almost complex structure $J$ compatible with $\omega$ and
sufficiently close to $J_0$, there exists a smoothly varying family of Hamiltonian transformations
$v_t$ such that $v_t\circ \ell_t$ is HSLAG with respect to $(M,\omega, J)$.
\end{prop}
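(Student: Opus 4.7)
The strategy is to apply the method of Theorem~\ref{theo:A} along the fibration, treating $t$ as an auxiliary parameter, and to use positivity of $J$ together with non-degeneracy of the local minimum at $t=0$ to produce a smooth family of Hamiltonian transformations $v_t$.

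First, I would check that the rigidity of $\ell_0$ propagates to $\ell_t$ for small $t$: a complement $\cK_M^o \subset \cK_M$ on which $\ell_0^*$ is an isomorphism onto $\cK_L$ continues to inject into $C^{3,\eta}(L)$ via $\ell_t^*$, with images varying smoothly in $t$. The $G^o_\ell$-invariance of the fibration forces $G^o_\ell \subset G^o_{\ell_t}$, and a dimension count combined with openness of the isomorphism property gives rigidity of each $\ell_t$ for $t$ small. Consequently, the Lagrangian neighborhood identifications, the bundle $\fK$ and the splittings introduced in \S\ref{sec:defo} can be set up uniformly in $t$, and a parametric version of the implicit function theorem of Proposition~\ref{prop:hslag} yields a smooth map $(t,J) \mapsto \phi_t(0,J) = f_{L,t,J} + h_{t,J}$ such that the Lagrangian embedding $\ell_{t, h_{t, u \cdot J}}$ is relatively HSLAG with respect to $u \cdot J$ for every $u \in G$.

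Next, I would consider the parametric family of modified volume functionals
\[
\voltilde_{t,J}:G/G^o_\ell \to \RR, \qquad \voltilde_{t,J}([u]) = \vol\bigl(L,\, \ell_{t, h_{t, u \cdot J}}^*\, g_{u \cdot J}\bigr),
\]
which are well defined thanks to $G^o_\ell$-invariance of the fibration and depend smoothly on $t$. By positivity of $J$ relative to $\ell_0$, the functional $\voltilde_{0,J}$ admits a non-degenerate local minimum at some $[u_0]$; the implicit function theorem applied to the critical-point equation on the compact finite-dimensional manifold $G/G^o_\ell$ then yields a smooth family of non-degenerate local minima $[u_t]$ for $t$ small. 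Proposition~\ref{prop:critical} applied at each $t$ ensures that $u_t \circ \ell_{t, h_{t, u_t \cdot J}}$ is HSLAG with respect to $J$; writing $\ell_{t, h_{t, u_t \cdot J}} = \psi_t \circ \ell_t$ for the Hamiltonian transformation $\psi_t$ produced by the cutoff construction in the proof of Theorem~\ref{theo:A}, one sets $v_t = u_t \circ \psi_t$ to obtain the required smooth family.

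The main obstacle is the smooth selection of the family $[u_t]$: it relies both on openness of the positivity condition (so that $J$ remains positive relative to $\ell_t$ for every small $t$) and on smooth $t$-dependence of $\voltilde_{t,J}$, which in turn rests on smooth $t$-dependence of the relative HSLAG solution $\phi_t$. These are routine parametric extensions of Proposition~\ref{prop:hslag}, but must be established with uniform estimates on a common neighborhood, analogous to those gathered in Lemmas~\ref{lemma:unifest}--\ref{lemma:boxest}.
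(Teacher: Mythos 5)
Your proposal is correct and takes essentially the same route as the paper: a parametric version of Proposition~\ref{prop:hslag} produces the relatively HSLAG deformations $\ell_{t,h_{t,u}}$, positivity of $J$ with respect to $\ell_0$ persists to nearby fibers by stability of the non-degenerate minimum, a smooth family of minima $u_t$ of $\voltilde$ on $G/G^o_\ell$ is selected by non-degeneracy, and Proposition~\ref{prop:critical} yields the HSLAG conclusion, with $v_t$ the composition of $u_t$ with the Hamiltonian transformation realizing the deformation. Your extra step showing rigidity propagates to each $\ell_t$ is a harmless refinement the paper does not carry out, since it only needs the implicit function theorem anchored at the rigid fiber $\ell_0$ with $t$ as a parameter.
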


Since we have a local smooth fibration, we readily deduce
\begin{cor}
Under the assumptions of Proposition~\ref{prop:almostsolution},
  there exists Hamiltonian transformations $v$ such that
$v\circ \ell_t$ is a HSLAG toric fibration with respect to $(M,\omega, J)$.
\end{cor}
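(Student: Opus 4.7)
The plan is to read off the corollary almost immediately from Proposition~\ref{prop:almostsolution}. That proposition already supplies, for each positive $J$ sufficiently close to $J_0$, a smoothly varying family $v_t$ of Hamiltonian transformations such that each $v_t\circ\ell_t$ is HSLAG with respect to $(M,\omega,J)$. The remaining task is twofold: first, to verify that the resulting family of HSLAG embeddings assembles into a local Lagrangian toric fibration; second, to reshape this into the form $v\circ\ell_t$ with a single Hamiltonian transformation $v$, as required by the statement.

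For the first point, I would introduce the map $\Phi:B(0,\delta)\times L\to M$ defined by $\Phi(t,x):=v_t(\ell_t(x))$. It is smooth in $(t,x)$ because $v_t$ depends smoothly on $t$ and $\ell_t$ depends smoothly on both variables. At $t=0$, the map $\Phi(0,\cdot)=v_0\circ\ell_0$ is a Lagrangian embedding. Moreover, the transversality of the $t$-derivatives of $\Phi$ at $t=0$ to $\Phi(0,L)$ is inherited from the fibration property of the original $\ell_t$, since the diffeomorphism $v_0$ preserves this transversality. Combined with compactness of $L$ and the inverse function theorem, this shows that $\Phi$ is an embedding on $B(0,\delta)\times L$ for some $\delta>0$, yielding an honest local Lagrangian toric fibration with HSLAG fibres.

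For the second point, I would set $v:=v_0$ and define the reparametrised family $\ell'_t:=v^{-1}\circ v_t\circ\ell_t$. This is a smooth family of Lagrangian embeddings with $\ell'_0=\ell_0$, and it is a local Lagrangian toric fibration (after possibly shrinking $\delta$). By construction, $v\circ\ell'_t=v_t\circ\ell_t$ is HSLAG with respect to $J$. After relabelling $\ell'_t$ as $\ell_t$, this is precisely the assertion of the corollary.

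The only real analytic content is the check that $\Phi$ is a genuine embedding on a full neighbourhood of $\{0\}\times L$, equivalently that distinct leaves $v_t(\ell_t(L))$ and $v_{t'}(\ell_{t'}(L))$ do not intersect for small $t\neq t'$. This is not a serious obstacle, however: the map $\Phi$ is a $C^1$-small perturbation of $(t,x)\mapsto v_0(\ell_t(x))$, which is already a local fibration since it is the Hamiltonian push-forward of $\ell_t$ by a single diffeomorphism; a standard compactness argument then yields the required $\delta>0$. The heart of the corollary is therefore entirely encapsulated in Proposition~\ref{prop:almostsolution}, which in turn rests on the positivity hypothesis and the implicit function theorem arguments of Section~\ref{sec:defo}.
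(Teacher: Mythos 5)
Your first step---checking that the family $t\mapsto v_t\circ\ell_t$ produced by Proposition~\ref{prop:almostsolution} assembles into a genuine local Lagrangian toric fibration, via the map $\Phi(t,x)=v_t(\ell_t(x))$, the inverse function theorem and compactness of $L$---is correct, and is in fact \emph{more} detailed than the paper, whose entire proof of the corollary is the sentence ``Since we have a local smooth fibration, we readily deduce.'' The smooth dependence of $v_t$ on $t$, which comes from the non-degeneracy of the local minimum (positivity of $J$), is exactly what makes this work, and you use it correctly.

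The gap is in your second step. Setting $v:=v_0$, defining $\ell'_t:=v_0^{-1}\circ v_t\circ\ell_t$, and ``relabelling $\ell'_t$ as $\ell_t$'' proves a different statement. In the corollary---and in Theorem~\ref{theo:B}, which the corollary is meant to establish---$\ell_t$ is the \emph{given} fibration from the hypotheses (e.g.\ the toric moment fibration of Theorem~\ref{theo:torichslag}), and the conclusion is that a \emph{single} Hamiltonian transformation corrects all fibers simultaneously: $v\circ\ell_t$ is HSLAG for all small $t$. With your choice $v=v_0$, the embedding $v_0\circ\ell_t$ is HSLAG only at $t=0$; replacing the given $\ell_t$ by $\ell'_t$ shows only that \emph{some} HSLAG fibration exists near $\ell_0$ (which Proposition~\ref{prop:almostsolution} already gives with no $v$ at all), not that the original family is pushed forward by one Hamiltonian transformation. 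Note also that the naive assembled map $w$ defined on the fibered neighborhood by $w(\ell_t(x)):=v_t(\ell_t(x))$ does not settle this either: although each $v_t$ is symplectic, $w$ need not be, since its derivative in directions transverse to the fibers involves $\partial_t v_t$. What is genuinely required is to show that the two $C^1$-close Lagrangian fibrations $\{\ell_t(L)\}$ and $\{v_t(\ell_t(L))\}$ of a common neighborhood are matched leaf by leaf by one Hamiltonian diffeomorphism of $(M,\omega)$---for instance by assembling the $t$-dependent exact generating $1$-forms of the fiberwise Hamiltonian deformations in a Weinstein neighborhood of the fibration and running a parametric Moser-type argument, then cutting off as in the proof of Theorem~\ref{theo:A}. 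To be fair, the paper leaves this step implicit as well; but it is precisely the content of the corollary beyond Proposition~\ref{prop:almostsolution}, and your relabelling does not supply it.
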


This proves Theorem \ref{theo:B}.

\vspace{10pt}
\bibliographystyle{abbrv}
\bibliography{hslag}

\end{document}

\subsection{K\"ahler-Einstein case}
  If $g_J$ is Einstein, $\Ric(x,y)=\lambda\ip{x,y}$, so that
  $\Ric^\perp=\lambda\id$. Thus
  $\alpha_{\Ric^\perp(J\nabla v)}=\lambda\omega(J\nabla v,\cdot) =
  -\lambda dv$.   It follows that  
$$
d^*\alpha_{\Ric^\perp(J\nabla v)}= -\lambda \Delta v
$$